\crefname{hypothesis}{Hypothesis}{Hypotheses}
\title{Quotient Geometry of Bounded or Fixed-Rank Correlation Matrices\thanks{Submitted to the editors TBA.
%\funding{This work was funded by the Fog Research Institute under contract no.~FRI-454.}
}}
\author{Hengchao Chen\thanks{Department of Statistical Sciences, University of Toronto
  (\email{hengchao.chen@mail.utoronto.ca}).}
%\and Paul T. Frank\thanks{Department of Applied Mathematics, Fictional University, Boise, ID   (\email{ptfrank@fictional.edu}, \email{jesmith@fictional.edu}).}
%\and Jane E. Smith\footnotemark[3]
}
\DeclareMathOperator{\diag}{diag}
\def\##1\#{\begin{align}#1\end{align}}
\def\$#1\${\begin{align*}#1\end{align*}}
\newcommand{\Exp}{{\rm{Exp}}}
\newcommand{\Log}{{\rm{Log}}}
\newcommand{\Inj}{{\rm{Inj}}}
\newcommand{\inj}{{\rm{inj}}}
\newcommand{\Pre}{{\rm{Pre}}}
\newcommand{\Corr}{{\rm{Corr}}}
\newcommand{\Stab}{{\rm{Stab}}}
\newcommand{\St}{{\rm{St}}}
\newcommand{\rank}{{\rm{rank}}}
\renewcommand{\dim}{{\rm{dim}}}
\renewcommand{\zero}{{\textbf{0}}}
\newcommand{\Diag}{{\rm Diag}}
\newcommand{\orb}{{\rm orb}}
\newcommand{\qt}{{\rm qt}}
\renewcommand{\diag}{{\rm diag}}
\newcommand{\grad}{{\rm grad}}
\renewcommand{\skew}{{\rm skew}}
\newcommand{\Hess}{{\rm Hess}}
\newcommand{\Frechet}{{Fr{$\rm \acute{e}$}chet$\ $}}
\begin{document}

\maketitle

% REQUIRED
\begin{abstract}
    This paper studies the quotient geometry of bounded or fixed-rank correlation matrices. We establish a bijection between the set of bounded-rank correlation matrices and a quotient set of a spherical product manifold by an orthogonal group. We show that it forms an orbit space, whose stratification is determined by the rank of the matrices, and the principal stratum has a compatible Riemannian quotient manifold structure. We show that any minimizing geodesic in the orbit space has constant rank on the interior of the segment. We also develop efficient Riemannian optimization algorithms for computing the distance and weighted the \Frechet mean in the orbit space. Moreover, we examine geometric properties of the quotient manifold, including horizontal and vertical spaces, Riemannian metric, injectivity radius, exponential and logarithmic map, curvature, gradient and Hessian. Finally, we apply our approach to a functional connectivity study using the Autism Brain Imaging Data Exchange.
\end{abstract}

% REQUIRED
\begin{keywords}
correlation matrices, rank, orbit spaces, quotient manifolds, distance, \Frechet mean, Riemannian optimization, functional connectivity. 
\end{keywords}

% REQUIRED
\begin{MSCcodes}
    51F99, 15B99, 53A99, 65K10.
\end{MSCcodes}

\section{Introduction}

A correlation matrix is a positive semi-definite (PSD) matrix with unit diagonal elements. The set of all correlation matrices is called the elliptope. The elliptope arises in combinatorial optimization~\cite{tropp2018simplicial} and in many areas involving time series data, such as brain connectivity studies based on functional MRI~\cite{varoquaux2010detection},  electroencephalogram (EEG) \cite{sanei2013eeg}, or magnetoencephalogram (MEG)~\cite{da2013eeg,mellinger2007meg},  finance~\cite{archakov2021new,rebonato2000most,laloux1999noise}, and psychology~\cite{steiger1980tests}. A fundamental question is to measure the distance between two correlation matrices. Using the Euclidean distance for PSD or correlation matrices is less favorable as geodesics leave the space in finite time. To solve this problem, many non-Euclidean metrics have been proposed in the last decades.

Most of these metrics are proposed for symmetric positive definite (SPD) matrices. Examples include affine-invariant~\cite{lenglet2006statistics,pennec2006riemannian,moakher2005differential,fletcher2007riemannian}, log-Euclidean~\cite{arsigny2006log,fillard2007clinical}, log-Cholesky~\cite{lin2019riemannian}, Bures-Wasserstein~\cite{bhatia2019bures,takatsu2011wasserstein}, and product metrics with one metric on positive diagonal matrices and one metric on full-rank correlation matrices~\cite{thanwerdas2022theoretically}. Utilizing these metrics, one can analyze full-rank correlation matrices in the space of SPD matrices. However, such approach presents two key limitations: they exclude low-rank PSD matrices and fail to account for constraints on diagonal elements. 

Current literature  explores these two issues separately. One line of research studies the space of PSD matrices of bounded or fixed rank. \cite{bonnabel2010riemannian} viewed the space of PSD matrices of fixed rank as a quotient manifold of the product manifold of  Stiefel manifold and SPD manifold by an orthogonal group. \cite{vandereycken2013riemannian} proposed a homogeneous space geometry for the manifold of PSD matrices of fixed rank, and showed that this geometry led to closed-form geodesics that can be extended indefinitely. \cite{vandereycken2009embedded} treated this space as a submanifold of the real matrices endowed with the usual Euclidean metric, and derived the expressions of the tangent space and geodesics. \cite{journee2010low} developed a low-rank optimization algorithm  using the quotient geometry of the set of PSD matrices of fixed rank. \cite{massart2020quotient} viewed the set of PSD matrices of fixed rank as a quotient manifold of non-compact Stiefel manifold by an orthogonal group. This  manifold is compatible with the Bures-Wasserstein metric. \cite{thanwerdas2023bures} further explored this quotient geometry and revealed that PSD matrices of bounded rank form an orbit space, and the distance is the Bures-Wasserstein distance. Finally, for certain restricted PSD matrices of fixed rank, one can use a  computationally convenient log-Cholesky metric~\cite{neuman2023restricted,chen2023statistical}. %However, these works all fail to account for unit  diagonal elements of correlation matrices.  

Another line of research considers the space of full-rank correlation matrices. \cite{archakov2021new} proposed a  parametrization of full-rank correlation matrices using matrix logarithm.  This parametrization has demonstrated applications in econometrics \cite{hafner2023dynamic} and statistics \cite{desai2023connectivity,hu2024applied}, and was  generalized by \cite{zwiernik2023entropic} from the perspective of entropic covariance models. Recently, another Euclidean parametrization for the set of full-rank correlation matrices was proposed by  \cite{thanwerdas2024permutation}. This new parametrization is inverse-consistent. In \cite{lucchetti2023spherical}, the authors analyzed the computational advantages of the spherical parametrization for correlation matrices.  Moreover, \cite{david2019riemannian,thanwerdas2022theoretically} viewed the space of full-rank correlation matrices as a quotient manifold of SPD manifold by the space of positive diagonal matrices, where the group action is given by the matrix congruence. \cite{david2019riemannian} endowed SPD manifold with the affine-invariant metric and obtained the quotient-invariant metric, while \cite{thanwerdas2022theoretically} endowed SPD manifold with permutation-dependent metrics, leading to different quotient metrics.  \cite{thanwerdas2021geodesics} derived the expressions of many fundamental Riemannian operations for the quotient-affine metrics proposed in \cite{david2019riemannian}. Despite the existence of these studies on the geometry of correlation matrices, all the above constructions  require the full-rank assumption, and thus do not generalize to low-rank cases.  

In this paper, we propose a quotient geometry for correlation matrices of bounded or fixed rank, effectively addressing both low-rankness and  unit diagonal constraints. This geometry is rooted in the bijection relationship between bounded-rank correlation matrices and a quotient set of a spherical product manifold by an orthogonal group.  Such bijection was used in \cite{rebonato2000most} for generating random correlation matrices, and was further studied in \cite{rapisarda2007parameterizing,kercheval2008rebonato} from a geometric or topological viewpoint. However, there is no systematic and rigorous study of the quotient geometry of correlation matrices before our work. Now let us delve into the quotient geometry of correlation matrices and describe our contributions. 
Specifically, any $m\times m$ correlation matrix $Z$ of rank at most $k$ can be expressed as $Z=XX^\top$, where $X$ is in  the set $\Pi^m\SSS^{k-1}$ of $m\times k$ matrices with $\ell_2$-normalized rows. Such selection of $X$ is unique up to an orthogonal transformation  \cite[Lemma 2.6]{groetzner2020factorization}, and all possible factors form the following set:
\$
[X]=\{XO\mid O\in\cO(k)\}.
\$
Let 
\$
\Pi^m\SSS^{k-1}/\cO(k)=\{[X]\mid X\in\Pi^m\SSS^{k-1}\}.
\$
Then the map $\Phi_k:[X]\to XX^\top$, is a bijection between the quotient set $\Pi^m\SSS^{k-1}/\cO(k)$ and the set of $m\times m$ correlation matrices of rank at most $k$, $\Corr(m,[k])$. Using $\Phi_k$, we will introduce a quotient geometry on $\Corr(m,[k])$, encoding low-rankness via matrix factorization and unit diagonal entries via spherical product manifolds $\Pi^m\SSS^{k-1}$.

Our first contribution is to construct the quotient structures of bounded or fixed rank correlation matrices. We demonstrate that the following group action 
\$
(X,O)\in\Pi^m\SSS^{k-1}\times\cO(k)\to XO\in\Pi^m\SSS^{k-1}
\$
is smooth, proper, and isometric, where $\Pi^m\SSS^{k-1}$ is endowed with the spherical product metric. As a result, we establish that $\Pi^m\SSS^{k-1}/\cO(k)$ endowed with a quotient distance is a Riemannian orbit space~\cite{michor2003riemannian}, and hence a geodesic metric space. Using $\Phi_k$, we can endow $\Corr(m,[k])$ with a distance such that $\Phi_k$ is distance-preserving. Moreover, we demonstrate that this metric structure induces the Euclidean subspace topology. 

Then, we examine the orbit stratification of the orbit space $\Pi^m\SSS^{k-1}/\cO(k)$. Specifically, we demonstrate that the stratification is determined by the rank of the matrices. Each stratum is given by $\Pi^m_\ell\SSS^{k-1}/\cO(k)$, where $\Pi^m_\ell\SSS^{k-1}$ is a submanifold of $\Pi^m\SSS^{k-1}$ with rank $\ell$. This is in bijection with the set $\Corr(m,\ell)$ of  $m\times m$ correlation matrices with rank $\ell$. In addition, we show that both $\Pi^m_\ell\SSS^{k-1}/\cO(k)$ and $\Corr(m,\ell)$ are smooth manifolds, and the map
\$
\Phi_{k,\ell}:\Pi^m_\ell\SSS^{k-1}/\cO(k)\to\Corr(m,\ell),\quad [X]\to XX^\top
\$
is a diffeomorphism. This is called the compatibility of the smooth structure. 

Moreover, we show that the principal stratum, $\Pi^m_k\SSS^{k-1}/\cO(k)$, of this orbit space has a compatible Riemannian structure. The key is to show that the group action 
\$
(X,O)\in\Pi^m_k\SSS^{k-1}\times\cO(k)\to XO\in\Pi^m_k\SSS^{k-1}
\$
is smooth, proper, free, and isometric. These findings bear resemblance to the Bures-Wasserstein geometry on bounded-rank covariance matrices~\cite{thanwerdas2023bures}. However, we remark a significant difference: the quotient distance on $\Corr(m,[k])$ depends on $k$, while the quotient distance on the set $\Cov(m,[k])$ of $m\times m$ covariance matrices of rank at most $k$, does not depend on $k$. This is due to the non-linearity of the spherical distance in contrast to the Euclidean distance, and it emphasizes the importance of choosing an appropriate $k$ in practice. By default, we recommend $k=m$. 

Our second contribution is to investigate geometric properties and computations in the orbit space $\Pi^m\SSS^{k-1}/\cO(k)$. The distance function does not admit a closed form solution, so we propose a Riemannian gradient descent algorithm for computing the distance. Also, it is interesting to study the weighted \Frechet mean of samples in a metric space. We develop an alternating minimization algorithm for the computation. Furthermore, we demonstrate that any minimizing geodesic has constant rank on the interior of the segment. 

Our third contribution is to examine the geometric quantities on the Riemannian quotient manifold $\Pi^m_k\SSS^{k-1}/\cO(k)$. Specifically, we describe the horizontal and vertical spaces and projections. The Riemannian quotient metric is introduced and we prove that the induced Riemannian manifold has an injectivity radius of zero. We show that the sectional curvatures of this quotient manifold are non-negative. We also provide the expressions of exponential map, gradient, connection, and Hessian.  Moreover, we present an approach to compute a Riemannian logarithm in the manifold. The idea is to address the alignment problem encountered during the computation of distances. 

Our last contribution is to provide an application of our geometric approach to a brain functional connectivity study using the Autism Brain Imaging Data Exchange. The code is available at \href{https://github.com/HengchaoChen/geomcorr}{https://github.com/HengchaoChen/geomcorr}. In our study, we find that the correlation matrices of the time series data from one single site tend to be of fixed rank. This justifies the usage of our proposed geometry in studying the difference between connectivity matrices of the patients and normal controls. 

The rest of the paper proceeds as follows. We  introduce notations used throughout the paper in Section \ref{sec:notation}. In Section \ref{sec:2}, we review basic concepts in geometry. In Section \ref{sec:3}, we introduce a series of quotient structures for the sets of correlation matrices. We also examine the compatibility of the topological, smooth, and Riemannian structures. In Section \ref{sec:4}, we delve into the orbit space structure of correlation matrices of bounded rank. Distances, geodesics, and the \Frechet mean are explored. In Section \ref{sec:5}, we study the Riemannian quotient manifold structure of correlation matrices of fixed rank. We discuss various geometric quantities.  Section \ref{sec:app} presents our application in the autism study, and Section \ref{sec:conclusion} gives concluding discussions. More geometric tools and omitted proofs are provided in the Appendix.

\subsection{Notation}\label{sec:notation} In this paper, we use the following notations. We use $\zero$ to denote the null matrix whose size depends on the context. $I_k$ denotes the identity matrix of size $k$. For two symmetric matrices $\Sigma,\Lambda$, we denote $\Sigma\succeq\Lambda$ when $\Sigma-\Lambda$ is a positive semi-definite matrix. For vectors $v,w\in\RR^m$, $\norm{v}$ denotes the $\ell_2$ norm of $v$ and $\inner{v}{w}$ denotes the inner product of $v$ and $w$. In addition, we use $\diag(v)\in\RR^{m\times m}$ to denote the diagonal matrix with diagonal elements $\{v_i\}$. For a matrix $X\in\RR^{m\times k}$, we use $X_{i\cdot}$ to denote the $i$-th row of $X$. In this paper, we will treat $X_{i\cdot}$ as a column vector. For a symmetric matrix $Z\in\RR^{m\times m}$, denote by $\Diag(Z)$ the diagonal matrix in $\RR^{m\times m}$ with diagonal elements equal to those of $Z$. With some abuse of notation, we use  $\diag(Z)\in\RR^m$ to denote an $m$-dimensional vector whose elements equal to the diagonal elements of $Z$. The meaning of $\diag(\cdot)$ thus depends on whether its input is a vector or a symmetric matrix. 
For a metric space $(\cX,d)$ and $\{x_n\},x\in\cM$, we use %$x_n{\to}_{d}x$ or 
$x_n\to x\textnormal{ in }d,$ to indicate that $x_n$ converges to $x$ in terms of the distance $d$. Finally, we use the following notations for matrix spaces. Let $m,k,\ell\in\NN$ and $\ell\leq k\leq m$. 
\vspace{3pt}
{
\renewcommand{\labelitemi}{\scalebox{0.4}{$\bullet$}}
\begin{itemize}
    
    \item the unit $k$-sphere $\SSS^{k}=\{x\in \RR^{k+1}\mid \norm{x}=1\}$, where $\norm{\cdot}$ is the $\ell_2$ norm;

    \item the space of $m\times k$ matrices $\RR^{m\times k}$;
    
    \item the space of $m\times k$ matrices of rank $\ell$, $\RR^{m\times k}_\ell$;

    \item the space of $m\times k$ matrices with $\ell_2$-normalized rows:
    \$
    \Pi^m\SSS^{k-1}=\{X\in\RR^{m\times k}\mid \norm{X_{i\cdot}}=1,\forall i\leq m\};
    \$
    
    \item the space of  $m\times k$ matrices with $\ell_2$-normalized rows and rank $\ell$:
    \$
    \Pi_\ell^m\SSS^{k-1}=\{X\in \Pi^m\SSS^{k-1}\mid \rank(X)=\ell\};
    \$
    
    \item the space of $m\times m$ covariance matrices:
    \$
    \Cov(m)=\{Z\in\RR^{m\times m}\mid Z\succeq \zero\};
    \$

    \item the space of $m\times m$ covariance matrices of rank at most $k$:
    \$
    \Cov(m,[k])=\{Z\in\Cov(m)\mid \rank(Z)\leq k\};
    \$
    In particular, $\Cov(m)=\Cov(m,[m])$.

    \item the space of $m\times m$ covariance matrices of fixed rank $k$:
    \$
    \Cov(m,k)=\{Z\in\Cov(m)\mid \rank(Z)=k\};
    \$
    
    \item the space of $m\times m$ correlation matrices:
    \$
    \Corr(m)=\{Z\in\Cov(m) \mid  Z_{ii}=1,\forall i\leq m\};
    \$

    \item the space of $m\times m$ correlation matrices of rank at most $k$:
    \$
    \Corr(m,[k])=\{Z\in\Cov(m,[k])\mid  Z_{ii}=1,\forall i\leq m\};
    \$
    In particular, $\Corr(m)=\Corr(m,[m])$.
    
    \item the space of $m\times m$ correlation matrices of fixed rank $k$:
    \$
    \Corr(m,k)=\{Z\in\Cov(m,k)\mid  Z_{ii}=1,\forall i\leq m\}.
    \$
    
    \item the set of $k\times k$ skew-symmetric matrices, $\cS_{\skew}(k)$;

    \item the orthogonal group $\cO(m)$;

    \item the Stiefel manifold $\St(m,k)=\cO(m)/\cO(m-k)$;
\end{itemize}
}

\section{Preliminary}\label{sec:2}

In this section, we review basic geometric concepts. We review basic concepts in metric geometry, including length and geodesic, in Section \ref{sec:2.1}. More details in metric geometry can be found in the textbook \cite{burago2022course}. Next, in Section \ref{sec:2.2}, we review basic concepts in Riemannian geometry,  such as Riemannian metric, distance, geodesic, exponential and logarithmic map. For more details in Riemannian geometry, one can refer to \cite{do1992riemannian,petersen2006riemannian}. In our review, familiarity with smooth manifolds is presumed. One can refer to the textbook \cite{lee2012introduction} for an introduction to smooth manifolds.  Finally, in Section \ref{sec:2.3}, we review quotient metric spaces and two examples, quotient manifolds and orbit spaces. We will use them to examine the quotient geometry of correlation matrices.  We give their definitions and basic properties, such as the orbit stratification and the convexity properties in orbit spaces. Our introduction to quotient manifolds is based on \cite{absil2008optimization,lee2012introduction,thanwerdas2023bures} and our introduction to orbit spaces is based on \cite{thanwerdas2023bures,michor2003riemannian,michor1997isometric}. It is  worth noting that our presentation closely follows Section 2 of \cite{thanwerdas2023bures}.  One can refer to these literature for more details. 

\subsection{Metric spaces}\label{sec:2.1}

In this section, we give a brief introduction to metric space, following  \cite[Section 2.1]{thanwerdas2023bures}. Let $(\cM,d)$ be a metric space. A curve in $\cM$ is a continuous map $c:I\to\cM$, where $I$ is an interval of $\RR$. For any $[a,b]\subseteq I$, we define $c|_{[a,b]}$ as the curve $c$ constrained to $[a,b]$. The length of $c|_{[a,b]}$ is defined as 
\$
L(c|_{[a,b]})\coloneqq \sup\sum_{k=0}^sd(c(t_k),c(t_{k+1})),
\$
where the supremum is taken over all subdivisions $a=t_0\leq t_1\leq \ldots\leq t_{s+1}=b$. The curve $c$ is rectifiable when $L(c|_{[a,b]})$ is finite for all $[a,b]\subseteq I$. For any $x,y\in\cM$, the length distance between $x$ and $y$ is defined by $d_L(x,y)=\inf_c L(c)$ over all rectifiable curves $c$ connecting $x$ and $y$. The length distance $d_L$ is a distance when $\cM$ is connected by rectifiable curves. When $d_L=d$, $(\cM,d)$ is called a length space. 

Let $c:I\to\cM$ be a curve in $\cM$. $c$ is said to be parameterized at constant speed when there exists $v\geq 0$ such that $L(c|_{[a,b]})=v(b-a)$ for all $a\leq b$. When $v=1$, we say $c$ is parameterized at unit speed. The curve $c$ is said to be (globally) minimizing if $L(c|_{[a,b]})=d(c(a),c(b))$ for all $a\leq b$ in $I$. $c$ is locally minimizing if for every $t\in I$, there is a neighborhood $I_0\subseteq I$ of $t$ such that $c|_{I_0}$ is minimizing. A locally minimizing curve of constant speed is called a geodesic. A globally minimizing curve of constant speed is called a minimizing geodesic. A geodesic metric space is a length space such that any two points in the space are connected by a minimizing geodesic. 

\subsection{Riemannian manifolds}\label{sec:2.2}

In this section, we briefly introduce Riemannian manifolds, following \cite{do1992riemannian,petersen2006riemannian,thanwerdas2023bures}.
A Riemannian manifold $(\cM,g)$ is a smooth manifold $\cM$ endowed with a smoothly varying family of inner products $g_x:T_x\cM\times T_x\cM\to\RR$, where $T_x\cM$ is the tangent space of $\cM$ at $x\in\cM$. $g$ is called a Riemannian metric and it allows us to measure geometric quantities on $\cM$ such as distances and curvatures. For a tangent vector $v\in T_{x}\cM$, its norm is given by $\norm{v}=g^{1/2}(v,v)$. Given a piecewise smooth curve $c$ in $\cM$, its length is given by integrating the norm of tangent vectors along the curve, i.e., $L(c)=\int \norm{\dot{c}(t)}dt$. Given any two points $x,y\in\cM$, the distance  between them is $d(x,y)=\inf_c L(c)$ over all piecewise smooth curves $c$ that connect $x$ and $y$. When $\cM$ is a connected manifold, the function $d$ is a distance, and the metric space $(\cM,d)$ is a length space. We say a connected manifold $(\cM,g)$ is a complete manifold if and only if it is complete as a metric space $(\cM,d)$. 

Let $(\cM,g)$ be a connected Riemannian manifold and $d$ the distance. A curve in $\cM$ is a geodesic (or minimizing geodesic) if it is a geodesic (or minimizing geodesic) in the metric space $(\cM,d)$. By the Hopf-Rinow theorem, $\cM$ is complete if and only if all geodesics extend indefinitely. Moreover, when $\cM$ is complete, any two points in $\cM$ can be connected by a minimizing geodesic. It implies that a connected complete manifold is a geodesic metric space. 

Let $(\cM,g)$ be a connected Riemannian manifold and $d$ be the distance. For any tangent vector $v\in T_{x}\cM$, there is a unique geodesic $\gamma_v(t)$ with $x$ as its initial location and $v$ as its initial velocity. The definition interval $I_{x,v}\subseteq\RR$ of $\gamma_v(t)$ is the maximal interval of $\RR$ on which the geodesic $\gamma_v(t)$ is defined. $I_{x,v}$ is $\RR$ when $\cM$ is complete.  The exponential map $\Exp_x:T_x\cM\to\cM$ is defined by $\Exp_x(v)=\gamma_v(1)$ when $\gamma_v(1)$ exists. The definition domain of $\Exp_x$ is 
\$
\cD_x=\{v\in T_x\cM\mid 1\in I_{x,v}\}.
\$
The cut time at $x\in\cM$ in the direction $v\in T_x\cM$, $\norm{v}=1$, is defined by 
\$
t_{\rm cut}(x,v)=\sup\{t\in I_{x,v}\mid d(x,\Exp_x(tv))=t \}\in (0,+\infty].
\$
The injection domain of $\Exp_x$ is the set 
\$
\Inj(x)=\{tv\mid t\in[0,t_{\rm cut}(x,v),v\in T_x\cM, \norm{v}=1\}\subseteq \cD_x.
\$
The injectivity radius at $x\in\cM$ is defined by 
\$
\inj(x)=\sup\{\epsilon\mid \cB_x(\epsilon)\subseteq \Inj(x)\},
\$
where $\cB_x(\epsilon)=\{v\in T_{x}\cM\mid \norm{x}<\epsilon\}$ is the centered open ball in $T_x\cM$ of radius $\epsilon$. The set of preimages of $y\in\cM$ from $x$ is 
\$
\Pre_x(y)=\Exp_x^{-1}(y)=\{v\in T_x\cM\mid 1\in I_{x,v},\Exp_x(v)=y\}.
\$
A logarithm of $y$ from $x$ is a preimage $v\in\Pre_x(y)$ of $y$ from $x$ such that $\norm{v}=d(x,y)$. We use $\Log_x(y)$ to denote the set of all logarithms of $y$ from $x$. Let $\cU_x\subseteq\cM$ the set of $y$ such that $\Log_x(y)$ consists of a single point. This defines a map $\Log_x:\cU_x\to T_x\cM$, which we call the logarithmic map. Remark that $\cB_x(\inj(x))\subseteq\cU_x$ and for all $\epsilon<\inj(x)$, $\Log_x:\Exp_x(\cB_x(\epsilon))\to\cB_x(\epsilon)$ is a diffeomorphism.

\subsection{Quotient spaces}\label{sec:2.3}

In this section, we  introduce quotient spaces and our contents follow the structure of \cite[Section 2.2]{thanwerdas2023bures}.
Let $(\cM,d)$ be a metric space and $G$ be a group acting isometrically on $(\cM,d)$. Define the quotient space $\cM^0=\cM/G$ and $d^0:\cM^0\times \cM^0\to\RR$ as 
\$
d^0([x],[y])=\inf_{a\in G}d(ax,y)\in[0,+\infty).
\$
Here $[x]=\{ax\mid a\in G\}$ is an element in the quotient space $\cM^0$ and $ax$ is given by the action of $G$ on $\cM$.  
If the orbits $\{[x]\}_{x\in\cM}$ are closed, then $d^0$ defines a distance on $\cM^0$ called the quotient distance, and $(\cM^0,d^0)$ is called a quotient metric space. Proposition \ref{prop:1} relates the curve lengths  in the quotient metric space $(\cM^0,d^0)$ to the curve lengths in $(\cM,d)$. Note that two points $x,y\in\cM$ are said to be registered when $d(x,y)=d^0(Gx,Gy)$. 

\begin{proposition}[\cite{thanwerdas2023bures}, Lemma 2.13]\label{prop:1}
    Let $L$ be the length on both $\cM$ and $\cM^0=\cM/G$, and $\pi:\cM\to\cM^0$ be the canonical projection. For all curves $c:[0,1]\to\cM$, we have $L(\pi\circ c)\leq L(c)$. Furthermore, if $x=c(0)$ and $y=c(1)$ are registered and if $c$ is minimizing, then $\pi\circ c$ is minimizing and $L(\pi\circ c)=L(c)=d(x,y)=d^0(\pi(x),\pi(y))$.
\end{proposition}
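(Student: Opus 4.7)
The plan is to derive both claims from two elementary ingredients: the definitional contraction $d^0(\pi(x),\pi(y))\leq d(x,y)$ that is built into the quotient distance as an infimum over $G$-orbits, and the universal bound $L(c)\geq d(c(0),c(1))$ that follows from the trivial subdivision $\{0,1\}$ in the definition of length from Section~\ref{sec:2.1}. The contraction inequality makes $\pi:\cM\to\cM^0$ a $1$-Lipschitz map, hence continuous, so $\pi\circ c$ is genuinely a curve in $\cM^0$ whose length is well-defined in the same sense as $L(c)$.

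For the first claim, I would fix any subdivision $0=t_0\leq\cdots\leq t_{s+1}=1$ and apply the contraction inequality to consecutive points:
\[
\sum_{k=0}^{s} d^0\bigl(\pi(c(t_k)),\pi(c(t_{k+1}))\bigr)\ \leq\ \sum_{k=0}^{s} d\bigl(c(t_k),c(t_{k+1}))\bigr)\ \leq\ L(c).
\]
Taking the supremum of the left-hand side over all such subdivisions then yields $L(\pi\circ c)\leq L(c)$.

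For the second claim, assume $x=c(0)$ and $y=c(1)$ are registered and $c$ is minimizing. Chaining inequalities,
\[
d^0(\pi(x),\pi(y))\ \leq\ L(\pi\circ c)\ \leq\ L(c)\ =\ d(x,y)\ =\ d^0(\pi(x),\pi(y)),
\]
where the first inequality is the universal bound applied to $\pi\circ c$, the second is the first claim, the third uses that $c$ is minimizing, and the fourth is the registration hypothesis. All inequalities must therefore be equalities, which gives $L(\pi\circ c)=d^0(\pi(x),\pi(y))$ (so $\pi\circ c$ is minimizing) together with the full chain of stated equalities.

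The argument is a short chain of inequalities, so I do not expect any substantive obstacle. The technicalities worth checking are that $\pi\circ c$ is continuous (which follows from the $1$-Lipschitz property of $\pi$ between the two metric spaces) and that a minimizing curve is automatically rectifiable, so all lengths in the chain are finite and the comparisons are meaningful.
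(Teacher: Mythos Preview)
Your argument is correct. The paper does not actually supply a proof of this proposition; it is quoted verbatim as Lemma~2.13 of \cite{thanwerdas2023bures}, so there is nothing in the present paper to compare your approach against. Your two-step argument (the $1$-Lipschitz property of $\pi$ applied to subdivisions, followed by the squeeze chain using registration) is the standard proof and is exactly what one finds in the cited reference. One small point worth making explicit: the paper's definition of ``minimizing'' requires $L(\gamma|_{[a,b]})=d^0(\gamma(a),\gamma(b))$ for \emph{all} $a\leq b$, not just the endpoints; you only derive $L(\pi\circ c)=d^0(\pi(x),\pi(y))$ on the full interval. This does imply the stronger statement via the additivity of length and the triangle inequality, but it is worth a sentence to close that loop.
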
 

In the sequel, we introduce two common examples of quotient metric spaces. The first example is called (Riemannian) quotient manifolds and the second is called (Riemannian) orbit spaces. These examples are both quotient spaces of some Riemannian manifold $(\cM,g)$ by a certain Lie group $G$. The difference is that in the case of quotient manifolds, the Lie group must act freely on the Riemannian manifolds, while in the case of orbit spaces, we do not need this condition. 

\subsubsection{Quotient manifolds}\label{sec:2.3.1}

Let $(\cM,g)$ be a Riemannian manifold and $G$ a Lie group acting smoothly, properly, freely, and isometrically on $(\cM,g)$. Then there exists a unique smooth structure on $\cM^0=\cM/G$ such that the quotient map $\pi:\cM\to\cM^0$ is a smooth submersion. The dimension of $\cM^0$ is $\dim(\cM^0)=\dim(\cM)-\dim(G)$. For all $x\in\cM$, we define the vertical space at $x$ as 
\$
\cV_x=T_x\cM_x=\ker d_x\pi,
\$
where $\cM_x=\pi^{-1}(x)$ is a submanifold of $\cM$. The horizontal space at $x$ is $\cH_x=\cV_x^{\perp}$ so that $T_x\cM=\cV_x\oplus\cH_x$. For any $v\in T_{\pi(x)}\cM^0$, there exists a unique vector $v^{\sharp}_x\in \cH_x$ such that $d_x\pi(v^{\sharp}_x)=v$. We call $v^{\sharp}_x$ as a horizontal lift of $v$ at $x$. The quotient metric on $\cM^0$ is defined as follows:
\$
g^0_{\pi(x)}(v,w)=g_x(v^{\sharp}_x,w^{\sharp}_x), \quad \forall v,w\in T_{\pi(x)}\cM^0.
\$
Equipped with this metric, $(\cM^0,g^0)$ becomes a Riemannian manifold, named quotient manifold. The quotient map $\pi$ is a Riemannian submersion from $(\cM,g)$ to $(\cM^0,g^0)$. The  distance $d^0$ induced by the quotient metric $g^0$ is the quotient distance of $d$ induced by $g$, which is 
\$
d^0(\pi(x),\pi(y))=\inf_{a\in G}d(ax,y).
\$
The induced space $(\cM^0,d^0)$ is a special case of quotient metric spaces. 
Fundamental properties of geodesics of a quotient metric are presented in Proposition~\ref{prop:quotient-geodesic}. 

\begin{proposition}[\cite{massart2020quotient}, Theorem 2.15; \cite{o1966fundamental}]\label{prop:quotient-geodesic}
     Let $x\in\cM$ and $v\in T_{\pi(x)}\cM^0$. We have $I_{x,v_x^\sharp}\subseteq I_{\pi(x),v}$, and for all $t\in I_{x,v_x^\sharp}$, we have $\Exp_{\pi(x)}(tv)=\pi(\Exp_x(tv_x^\sharp))$. 
\end{proposition}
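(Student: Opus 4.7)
The plan is to construct a candidate geodesic in $\cM^0$ by projecting a horizontal geodesic from $\cM$. Concretely, define $\tilde{\gamma}(t) = \Exp_x(tv_x^\sharp)$ on its maximal interval $I_{x,v_x^\sharp}$, and set $\gamma := \pi\circ\tilde{\gamma}$. The goal is to show that $\gamma$ is a geodesic in $(\cM^0,g^0)$ with $\gamma(0)=\pi(x)$ and $\dot{\gamma}(0)=v$; uniqueness of geodesics with prescribed initial data then forces $\gamma(t)=\Exp_{\pi(x)}(tv)$ on $I_{x,v_x^\sharp}$, which simultaneously yields the interval inclusion $I_{x,v_x^\sharp}\subseteq I_{\pi(x),v}$ and the identity $\Exp_{\pi(x)}(tv)=\pi(\Exp_x(tv_x^\sharp))$.

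The first step is to show that $\tilde{\gamma}$ stays horizontal, i.e.\ $\dot{\tilde{\gamma}}(t)\in\cH_{\tilde{\gamma}(t)}$ for every $t\in I_{x,v_x^\sharp}$. Since $G$ acts by isometries, every $\xi$ in $\mathrm{Lie}(G)$ generates a fundamental Killing vector field $X_\xi$ on $\cM$. A standard consequence of the Killing equation is that, along any geodesic $c$, the function $t\mapsto g_{c(t)}(\dot c(t),X_\xi(c(t)))$ is constant. At $t=0$ this constant equals $g_x(v_x^\sharp,X_\xi(x))=0$, since $v_x^\sharp$ is horizontal and $X_\xi(x)$ is vertical. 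Because the action is smooth, proper and free, at every $y$ in the orbit the vertical space coincides with $\{X_\xi(y):\xi\in\mathrm{Lie}(G)\}$, so the vanishing of all these inner products along $\tilde{\gamma}$ gives exactly $\dot{\tilde{\gamma}}(t)\perp\cV_{\tilde{\gamma}(t)}$.

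With horizontality established, the second step is to use the O'Neill submersion identity for the Levi--Civita connection: for horizontal vector fields $\tilde U,\tilde W$ that are $\pi$-related to $U,W$ on $\cM^0$, one has $d\pi\bigl((\nabla_{\tilde U}\tilde W)^{\cH}\bigr)=\nabla^0_U W$. This is the standard derivation from the Koszul formula applied to the relation $g^0_{\pi(x)}(U,W)=g_x(\tilde U_x,\tilde W_x)$. Applied along $\tilde{\gamma}$, whose velocity is by the previous step the horizontal lift of $\dot\gamma$, this gives $\nabla^0_{\dot\gamma}\dot\gamma=d\pi(\nabla_{\dot{\tilde{\gamma}}}\dot{\tilde{\gamma}})=0$. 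Together with $\gamma(0)=\pi(x)$ and $\dot\gamma(0)=d_x\pi(v_x^\sharp)=v$ (from the definition of the horizontal lift), this identifies $\gamma$ with the unique maximal geodesic in $\cM^0$ through $\pi(x)$ with velocity $v$, so the conclusion follows.

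The main obstacle is the horizontality preservation in Step 1: everything downstream is bookkeeping with the submersion identity, whereas the fact that a geodesic launched horizontally stays horizontal rests essentially on the isometric group action producing Killing fields whose pointwise span recovers $\cV_y$. This uses both the isometry hypothesis (Killing property) and the proper free action (to identify the vertical distribution with the infinitesimal orbit directions); once these are combined, the rest of the argument is a direct consequence of standard Riemannian submersion theory.
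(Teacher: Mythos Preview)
The paper does not supply its own proof of this proposition; it simply cites \cite{massart2020quotient} and \cite{o1966fundamental} and states the result. Your argument is the standard one from that literature: use the isometric action to produce Killing fundamental vector fields, invoke the conserved inner product along geodesics to get horizontality preservation, and then apply the O'Neill connection identity to conclude that the projected curve is a geodesic with the correct initial data. This is correct and is essentially the approach of the cited sources, so there is nothing to compare against within the paper itself.

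One minor wording point: in Step~1 you write ``at every $y$ in the orbit the vertical space coincides with $\{X_\xi(y):\xi\in\mathrm{Lie}(G)\}$''; you need this identification at every $y\in\cM$ (in particular along $\tilde\gamma$), not just along a single orbit. This holds because the action is free, so the infinitesimal orbit map $\xi\mapsto X_\xi(y)$ is injective and hence an isomorphism onto $\cV_y$ for all $y$. With that clarification the argument is complete.
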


\subsubsection{Orbit spaces}\label{sec:2.3.2}

Let $(\cM,g)$ be a connected complete Riemannian manifold and $G$ be a Lie group acting smoothly, properly, and isometrically on $(\cM,g)$. Then $(\cM/G,d^0)$, called the orbit space of $\cM$ with respect to $G$, is a complete metric space and a length space, where $d^0$ is the quotient distance. An important property of the orbit space $(\cM/G,d^0)$ is the orbit stratification, which is summarized below. 

\begin{proposition}[The properties of orbit spaces \cite{michor2003riemannian}]\label{prop:2}
    Let $(\cM,g)$ be a connected complete Riemannian manifold, and $G$ be a Lie group acting smoothly, properly, and isometrically on $(\cM,g)$. Let $\pi:\cM\to\cM/G$ be the canonical projection. For a Lie subgroup $H$ of $G$, define $(H)=\{aHa^{-1}\mid a\in G\}$ the conjugacy class of $H$. Let $\cM_{(H)}$ be the set of points $x\in\cM$ such that the stabilizer of $x$, ${\rm Stab}(x)=\{g\in G\mid gx=x\}$, belongs to $(H)$. The following properties hold. 
    \begin{enumerate}[leftmargin = 32pt]
        
        \item $\cM_{(H)}$ is a smooth embedded submanifold of $\cM$. 
        \item Denote $(\cM/G)_{(H)}=\pi(\cM_{(H)})=\cM_{(H)}/G$ the isotropy stratum of type $(H)$. Then $\pi_{(H)}\coloneqq \pi|_{\cM(H)}:\cM_{(H)}\to (\cM/G)_{(H)}$ is a smooth fiber bundle with fiber type $G/H$. In particular, the projection $\pi_{(H)}$ is a smooth submersion.
        \item The isotropy strata form a partition of $\cM/G$.
        \item There exists a unique minimal isotropy group $(H_{\rm reg})$ such that
        {
        \renewcommand{\labelitemi}{\scalebox{0.4}{$\bullet$}}
        \begin{itemize}
            \item $(\cM/G)_{(H_{\rm reg})}$ is connected, locally connected, open, and dense in $\cM/G$;
            \item The slice representations at all points of $\cM_{(H_{\rm reg})}$ are trivial;
            \item $\dim(\cM/G)_{(H_{\rm reg})}=\dim \cM-\dim G+\dim H_{\rm reg}$.
        \end{itemize}
        }
        The stratum $(\cM/G)_{(H_{\rm reg})}$ is called the principal isotropy stratum. 
    \end{enumerate}
\end{proposition}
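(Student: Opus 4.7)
The plan is to reduce every claim to the slice theorem for proper Lie group actions, which is the technical heart of Michor's theory and provides a local equivariant model near each orbit. Properness of the action guarantees that every stabilizer $\Stab(x)$ is compact, and the slice theorem then produces a $\Stab(x)$-invariant linear slice $S_x \subseteq T_x\cM$ together with a $G$-equivariant diffeomorphism from a $G$-invariant tubular neighborhood of the orbit $Gx$ onto the associated bundle $G \times_{\Stab(x)} S_x$. With this local model in hand, all four assertions become structural statements about the linear action of the compact stabilizer on $S_x$.

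For assertion (1), fix $x \in \cM_{(H)}$ and, after replacing $H$ by a conjugate, assume $\Stab(x) = H$. In the slice neighborhood, a point $[g,s] \in G \times_H S_x$ has stabilizer $g H_s g^{-1}$, where $H_s \subseteq H$ is the $H$-stabilizer of $s$. Thus the intersection of $\cM_{(H)}$ with the tube corresponds to pairs $[g,s]$ with $H_s = H$, i.e.\ with $s$ in the fixed-point subspace $S_x^H$. Since $H$ acts trivially on $S_x^H$, the tube restricts to $(G/H) \times S_x^H$, which exhibits $\cM_{(H)}$ locally as a smooth embedded submanifold. For assertion (2), the canonical projection collapses the first factor, so $\pi|_{\cM_{(H)}}$ is locally the trivial projection $(G/H) \times S_x^H \to S_x^H$, which is manifestly a smooth fiber bundle with fiber $G/H$ and, in particular, a submersion.

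Assertion (3) is immediate from $\Stab(gx) = g\,\Stab(x)\,g^{-1}$, so the conjugacy class $(\Stab(x))$ depends only on the orbit of $x$, and distinct isotropy strata are disjoint. Assertion (4) is the principal orbit theorem. I would introduce the partial order $(H_1) \preceq (H_2)$ defined by conjugating a representative of $(H_1)$ into some representative of $(H_2)$, and read off from the slice that, for any $x$, nearby orbit types satisfy $(\Stab(x)) \preceq (\Stab(y))$ with equality exactly on $\cM_{(\Stab(x))}$. Hence the minimal orbit type near $x$ is realized on an open, dense subset of the slice, and connectedness of $\cM$ forces a globally unique minimal class $(H_{\rm reg})$ such that $(\cM/G)_{(H_{\rm reg})}$ is open, dense, and connected in $\cM/G$. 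Triviality of the slice representation at a regular point is then forced by minimality of $(H_{\rm reg})$: any nontrivial invariant direction would yield nearby points with strictly smaller stabilizer. Finally, openness gives $\dim \cM_{(H_{\rm reg})} = \dim \cM$, while the submersion in (2) has fiber dimension $\dim G - \dim H_{\rm reg}$, yielding the asserted formula.

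The main obstacle is the slice theorem itself. Its proof requires averaging to build a $G$-invariant Riemannian metric in a tubular neighborhood of the orbit (possible since $\Stab(x)$ is compact) and then using the normal exponential map to identify the normal bundle of the orbit with the tube in a $G$-equivariant way. Once the slice theorem is granted, each of (1)--(4) is a routine consequence of the local model, and in practice I would simply cite \cite{michor2003riemannian} rather than reproduce the machinery.
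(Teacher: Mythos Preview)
The paper does not prove this proposition; it is quoted verbatim as a preliminary result from \cite{michor2003riemannian} and used as a black box. Your concluding remark that in practice one would ``simply cite \cite{michor2003riemannian} rather than reproduce the machinery'' is precisely what the paper does, so there is no proof to compare against. Your slice-theorem outline is the standard route and is more than the paper supplies.

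One slip worth flagging: in your treatment of (4) the inequality is reversed. In the slice model at $x$ with $H=\Stab(x)$, a nearby point $[g,s]$ has stabilizer conjugate to $H_s\subseteq H$, so the correct relation is $(\Stab(y))\preceq(\Stab(x))$ for $y$ near $x$, not $(\Stab(x))\preceq(\Stab(y))$. In other words, $x$ carries the locally \emph{maximal} isotropy type on its tube, and the minimal type is realized on the generic (open, dense) part of the slice. Your subsequent sentences about the minimal orbit type being open and dense are correct, but they follow from the corrected inequality, not the one you wrote.
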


Another property in an orbit space is that minimizing geodesics in $(\cM/G,d^0)$ can hit strata which are more singular only at the end points. This is formally presented in Proposition \ref{prop:3} and is phrased as the convexity property in \cite{michor2003riemannian}. 

\begin{proposition}[Convexity property, \cite{michor2003riemannian}]\label{prop:3}
    Suppose the assumptions in Proposition \ref{prop:2} hold. Given Lie subgroups $H_1,H_2$ of $G$, denote $(H_1)\leq (H_2)$ if $H_1$ is conjugated to a subgroup of $H_2$, i.e., there exists a $a\in G$ such that $aH_1a^{-1}$ is a subgroup of $H_2$. Let $\gamma:[0,1]\to\cM/G$ be a minimizing curve. For $t\in[0,1]$, denote by $(\cM/G)_{(H_t)}$ the stratum of $\gamma(t)$. Then, for all $t\in(0,1)$, $(H_t)\leq (H_0)$ and $(H_t)\leq (H_1)$. 
\end{proposition}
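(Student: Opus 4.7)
The plan is to lift $\gamma$ to a minimizing geodesic $\tilde\gamma$ in $\cM$ projecting exactly onto $\gamma$, and then to run a broken-geodesic rigidity argument at an arbitrary interior point.

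First I would produce the registered lift. By combining completeness of $(\cM,g)$, properness of the $G$-action, and local horizontal lifts of $\gamma$ over short intervals (available through the slice theorem at each point), one can glue a piecewise-smooth lift $\tilde\gamma:[0,1]\to\cM$ with $\pi\circ\tilde\gamma=\gamma$ and $L(\tilde\gamma)=L(\gamma)$. Writing $x_t=\tilde\gamma(t)$, the chain $d(x_0,x_1)\geq d^0(\gamma(0),\gamma(1))=L(\gamma)=L(\tilde\gamma)\geq d(x_0,x_1)$ forces $\tilde\gamma$ to be a minimizing curve between $x_0$ and $x_1$; after constant-speed reparametrization it is then a smooth geodesic of $(\cM,g)$. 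Denoting $H_t=\Stab(\tilde\gamma(t))$, it suffices to prove $H_t\subseteq H_0\cap H_1$ as subgroups of $G$, since this immediately yields $(H_t)\leq(H_0)$ and $(H_t)\leq(H_1)$ as conjugacy classes.

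The core step is the rigidity argument. Fix $t\in(0,1)$ and $h\in H_t$. Since $h$ is an isometry of $(\cM,g)$, $h\cdot\tilde\gamma$ is a constant-speed geodesic of the same length, and $\pi\circ(h\cdot\tilde\gamma)=\gamma$ because $h\in G$. Define $c:[0,1]\to\cM$ by $c(s)=\tilde\gamma(s)$ on $[0,t]$ and $c(s)=h\cdot\tilde\gamma(s)$ on $[t,1]$; $c$ is continuous at $s=t$ because $h\cdot\tilde\gamma(t)=\tilde\gamma(t)$, and has constant speed with total length $L(\gamma)$. Since $h\cdot x_1$ remains a lift of $\gamma(1)$, $d(x_0,h\cdot x_1)\geq d^0(\gamma(0),\gamma(1))=L(\gamma)$, so $c$ is itself a minimizing curve and hence a smooth geodesic in $(\cM,g)$. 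Matching left and right derivatives at $s=t$ gives $d h(\dot{\tilde\gamma}(t))=\dot{\tilde\gamma}(t)$; as $h$ fixes both $\tilde\gamma(t)$ and $\dot{\tilde\gamma}(t)$, uniqueness of geodesics with prescribed initial data gives $h\cdot\tilde\gamma(s)=\tilde\gamma(s)$ for every $s\in[0,1]$. In particular $h\in\Stab(x_0)\cap\Stab(x_1)=H_0\cap H_1$, which is the desired containment.

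The hard part will be the construction of a minimizing geodesic in $\cM$ that projects \emph{exactly} onto $\gamma$, rather than onto some other minimizing geodesic between the same endpoints. This is where properness of the action, completeness of $\cM$, and the local slice theorem all enter: local horizontal lifts exist on each short piece, and the additivity of length together with the inequality $d(x_0,x_1)\geq d^0(\gamma(0),\gamma(1))$ then promotes the piecewise-smooth glued curve to a single smooth minimizing geodesic. Once this registered lift is in place, the remainder of the proof rests only on two standard Riemannian facts: minimizing curves parametrized at constant speed have no corners, and geodesics are uniquely determined by their initial position and velocity.
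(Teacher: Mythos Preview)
The paper does not prove this proposition; it is quoted verbatim as a preliminary result from \cite{michor2003riemannian}, so there is no in-paper argument to compare against. Your outline is precisely the classical argument one finds in that reference and in the related Alekseevsky--Kriegl--Losik--Michor work: lift $\gamma$ to a minimizing geodesic $\tilde\gamma$ in $\cM$, then for $h\in\Stab(\tilde\gamma(t))$ concatenate $\tilde\gamma|_{[0,t]}$ with $h\cdot\tilde\gamma|_{[t,1]}$, observe that the concatenation is still minimizing in $\cM$, hence smooth at $t$, hence $dh(\dot{\tilde\gamma}(t))=\dot{\tilde\gamma}(t)$, and conclude $h\cdot\tilde\gamma=\tilde\gamma$ by uniqueness of geodesics. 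That rigidity step is entirely correct.

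One simplification to the part you flag as hard: you do not actually need a lift projecting \emph{exactly} onto $\gamma$ on the whole interval. Fix $t\in(0,1)$. By properness, choose $x_0\in\pi^{-1}(\gamma(0))$ and $x_t\in\pi^{-1}(\gamma(t))$ registered, and then $x_1\in\pi^{-1}(\gamma(1))$ registered with $x_t$. Let $c_1$ be a minimizing geodesic in $\cM$ from $x_0$ to $x_t$ and $c_2$ one from $x_t$ to $x_1$. The concatenation $c$ has length $d^{0}(\gamma(0),\gamma(t))+d^{0}(\gamma(t),\gamma(1))=d^{0}(\gamma(0),\gamma(1))\leq d(x_0,x_1)$, so $c$ is a minimizing geodesic in $\cM$ through $x_t$. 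Your broken-geodesic argument applied to $c$ then yields $\Stab(x_t)\subseteq\Stab(x_0)\cap\Stab(x_1)$, hence $(H_t)\leq(H_0)$ and $(H_t)\leq(H_1)$. The lift $c$ may depend on $t$ and need not project onto $\gamma$ away from the three marked times, but that is irrelevant to the conclusion. This avoids invoking horizontal lifts through singular orbits, which is where your slice-theorem gluing would otherwise require care.
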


\section{Quotient geometry of correlation matrices}\label{sec:3}

In this section, we introduce  quotient structures for different sets of correlation matrices. Specifically, we show that the set $\Corr(m,[k])$ of correlation matrices of rank at most $k$ is endowed with an orbit space structure. This is rooted in the bijection between $\Corr(m,[k])$ and a quotient set $\Pi^m\SSS^{k-1}/\cO(k)$, as discussed in the introduction. The bijective map is
\#\label{equ:Phi_k}
\Phi_k:\Pi^m\SSS^{k-1}/\cO(k)\to\Corr(m,[k]),\quad [X]\to XX^\top.
\#
We endow $\Pi^m\SSS^{k-1}$ with the spherical product metric and show that $\Pi^m\SSS^{k-1}/\cO(k)$ has an orbit space structure, with its distance denoted by $d^{\orb,k}$. Then we define the metric structure on $\Corr(m,[k])$ via the map $\Phi_k$ such that $\Phi_k$ is distance-preserving. With some abuse of notation, we still denote this distance as $d^{\orb,k}$. As we will show, the metric space $(\Corr(m,[k]),d^{\orb,k})$ shares the same topology as $(\Corr(m,[k]),d^{\rm E})$, where $d^{\rm E}$ is the Euclidean distance. 

Next, we study the orbit stratification of $\Pi^{m}\SSS^{k-1}/\cO(k)$. We show that the strata are determined by the rank of matrices. In particular, any stratum of the orbit space $\Pi^{m}\SSS^{k-1}/\cO(k)$ is given by $\Pi^m_\ell\SSS^{k-1}/\cO(k)$ for some rank $\ell\leq k$. The map
\#\label{equ:Phi_kl}
\Phi_{k,\ell}:\Pi^m_\ell\SSS^{k-1}/\cO(k)\to\Corr(m,\ell),\quad [X]\to XX^\top,
\#
is a bijection between this stratum and $\Corr(m,\ell)$. Also, we show that $\Pi^m_\ell\SSS^{k-1}/\cO(k)$ is a smooth manifold, $\Corr(m,\ell)$ is a smooth embedded submanifold of $\RR^{m\times m}$, and the map $\Phi_{k,\ell}$ is indeed a diffeomorphism between these two smooth manifolds. 

Then, we study the principal stratum $\Pi^m_k\SSS^{k-1}/\cO(k)$ of $\Pi^m\SSS^{k-1}/\cO(k)$. It is dense in the orbit space. It is diffeomorphic to a smooth submanifold $\Corr(m,k)$ of $\RR^{m\times m}$, where $\Phi_{k,k}$ gives the diffeomorphism. More importantly, we show that $\Pi^m_k\SSS^{k-1}/\cO(k)$ admits a quotient manifold structure with the Riemannian metric denoted by $g^{\qt,k}$. Since $\Phi_{k,k}$ is a diffeomorphism, we can define a Riemannian metric $g^{k}$ on $\Corr(m,k)$ such that $\Phi_{k,k}$ is a Riemannian isomorphism. We show that the distance induced by such Riemannian metric $g^{\qt,k}$ or $g^{k}$ is equal to the orbit space distance $d^{\orb,k}$ in the corresponding spaces. %These structures are summarized and visualized in Figure \ref{fig:1}.   

Figure \ref{fig:1} summarizes the connections of the above structures.

\begin{figure}[t]
    \centering
    \includegraphics[width=\textwidth]{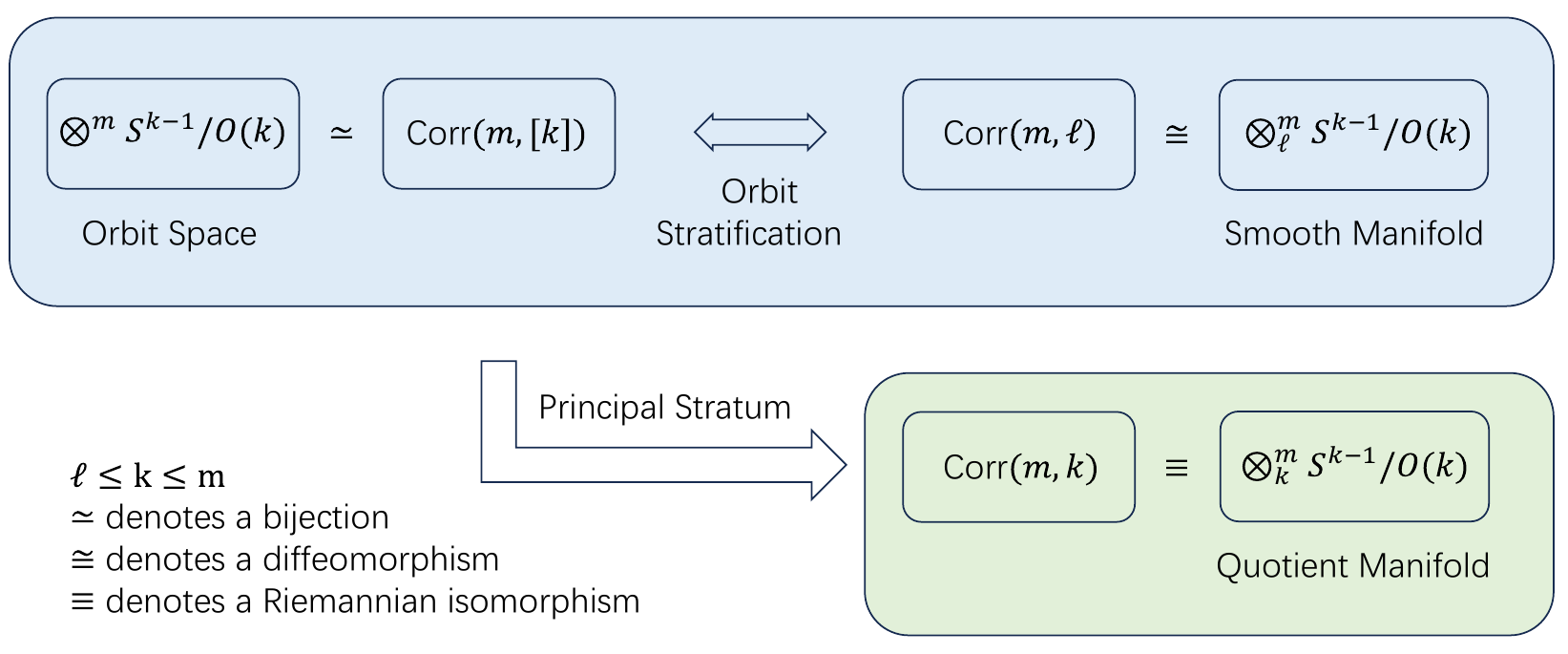}
    \caption{Quotient structures of correlation matrices of bounded rank. It shows that $\Corr(m,[k])$ is in bijection with the orbit space $\Pi^m\SSS^{k-1}/\cO(k)$. The strata of the orbit space are given by $\Corr(m,\ell)$ for some $\ell\leq m$. The principal stratum $\Corr(m,k)$ has a compatible Riemannian quotient structure.}
    \label{fig:1}
\end{figure}

Finally, we emphasize a subtle point. By the above discussion, we can construct a series of orbit spaces $(\Corr(m,[k]),d^{\orb,k})$ for different $k\leq m$. As we will show, the topologies of these metric spaces coincide with the subspace topologies induced from $\RR^{n\times n}$. The strata of these orbit spaces are  $\Corr(m,\ell)$ for some $\ell$, and can be smoothly embedded into $\RR^{m\times m}$. However, despite these compatible topological and smooth structures, the distances $d^{\orb,k}$  vary for different $k$, even when the comparison is taken in the same domain. For this reason, we need to select $k$ at the outset for applications involving the metric structures. We suggest $k=m$ by default.

In the remainder of this section, we elaborate how to construct the above quotient structures. In Section \ref{sec:3.1}, we construct the orbit space structure of $\Corr(m,[k])$. The orbit stratification is discussed in Section \ref{sec:3.2}. We introduce the Riemannian structure of $\Corr(m,k)$ and discuss its compatibility with the orbit space $\Corr(m,[k])$ in Section \ref{sec:3.3}. In Section \ref{sec:3.4}, we show that $d^{\orb,k}$ vary for different $k$, even when restricted to the same domain. In Section \ref{sec:4} and Section \ref{sec:5}, we will provide more detailed geometric properties and computations regarding these structures.

\subsection{An orbit space structure of $\Corr(m,[k])\simeq\Pi^m\SSS^{k-1}/\cO(k)$}\label{sec:3.1}

In the introduction, we have shown that $\Corr(m,[k])$ is in bijection with $\Pi^m\SSS^{k-1}/\cO(k)$ with $\Phi_k$ in \eqref{equ:Phi_k} being the bijective map. Now we endow $\Pi^m\SSS^{k-1}$ with the natural spherical product metric $g^{\Pi\SSS,k}$.  The spherical product manifold $(\Pi^m\SSS^{k-1},g^{\Pi\SSS,k})$ is a connected complete Riemannian manifold when\footnote{Here we consider $k\geq 2$. When $k=1$, the space consists trivially of several discrete points.} $k\geq 2$. The induced geodesic distance is
\#\label{equ:spherical-dist}
d^{\Pi\SSS,k}(X,Y)=\sqrt{\sum_{i=1}^m\arccos^2(\inner{X_{i\cdot}}{Y_{i\cdot}})},\quad\forall X,Y\in\Pi^m\SSS^{k-1}.
\#
Let the group action of $\cO(k)$ on the manifold $(\Pi^m\SSS^{k-1},g^{\Pi\SSS,k})$ be as follows:
\#\label{equ:action}
(X,O)\in\Pi^m\SSS^{k-1}\times \cO(k)\to XO\in\Pi^m\SSS^{k-1}.
\#
Proposition \ref{prop:4} shows that this group action is smooth, proper, and isometric.

\begin{proposition}\label{prop:4}
    The group action \eqref{equ:action} is smooth, proper, and isometric. 
\end{proposition}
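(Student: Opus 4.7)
The plan is to verify the three properties — smoothness, properness, and isometry — separately, since they rest on different observations. The first and third are routine verifications in the ambient Euclidean spaces, while the second is essentially automatic because $\cO(k)$ is compact.

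For smoothness, I would extend the action to ambient space. The map $(X,O)\mapsto XO$ from $\RR^{m\times k}\times\RR^{k\times k}$ to $\RR^{m\times k}$ is polynomial, hence smooth. Since $\Pi^m\SSS^{k-1}$ is a smooth embedded submanifold of $\RR^{m\times k}$ (it is the product of $m$ unit spheres) and $\cO(k)$ is an embedded submanifold of $\RR^{k\times k}$, one only needs to check that the image of the restricted map lies in $\Pi^m\SSS^{k-1}$. Writing $X_{i\cdot}$ for the $i$-th row of $X$ as a column vector, one has $(XO)_{i\cdot}=O^\top X_{i\cdot}$, so $\|(XO)_{i\cdot}\|=\|O^\top X_{i\cdot}\|=\|X_{i\cdot}\|=1$. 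Therefore the action is smooth.

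For properness, I would invoke the classical fact that a continuous action of a compact Lie group on a manifold is automatically proper. Since $\cO(k)$ is compact, this applies directly. If a more hands-on argument is preferred, one can check the sequential criterion: given $X_n\to X$ in $\Pi^m\SSS^{k-1}$ and $X_nO_n\to Y$ with $O_n\in\cO(k)$, compactness of $\cO(k)$ yields a convergent subsequence $O_{n_j}\to O$, which suffices.

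For the isometry, the spherical product metric coincides with the restriction of the ambient Frobenius inner product to the tangent space
\$
T_X\Pi^m\SSS^{k-1}=\{V\in\RR^{m\times k}\mid \inner{X_{i\cdot}}{V_{i\cdot}}=0,\ \forall i\leq m\},
\$
so $g^{\Pi\SSS,k}_X(V,W)=\sum_{i=1}^m\inner{V_{i\cdot}}{W_{i\cdot}}$. The differential of the action $X\mapsto XO$ sends $V$ to $VO$, and the identity $\inner{(VO)_{i\cdot}}{(WO)_{i\cdot}}=\inner{O^\top V_{i\cdot}}{O^\top W_{i\cdot}}=\inner{V_{i\cdot}}{W_{i\cdot}}$ shows that $g^{\Pi\SSS,k}_{XO}(VO,WO)=g^{\Pi\SSS,k}_X(V,W)$, proving that the action is isometric. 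No step here presents a real obstacle; the main care is simply to keep the row-as-column-vector convention consistent when computing $(XO)_{i\cdot}$.
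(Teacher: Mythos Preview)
Your proposal is correct and follows essentially the same route as the paper: smoothness is immediate from the ambient polynomial map, properness follows from compactness of $\cO(k)$ via the standard fact (the paper's Proposition~\ref{prop:a2}), and isometry is the observation that right multiplication by an orthogonal matrix preserves the spherical product metric. You have simply spelled out in more detail what the paper states tersely.
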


\begin{proof}
    It is direct to see that the group action \eqref{equ:action} is smooth. Proposition \ref{prop:a2} states that every continuous action by a compact Lie group on a manifold is proper. Since $\cO(k)$ is a compact Lie group, the action \eqref{equ:action} is proper. Since the action of multiplying an orthogonal matrix preserves the spherical product metric, the action \eqref{equ:action} is isometric. This proves the proposition. 
\end{proof}

Thus, \eqref{equ:action} is a smooth, proper, and isometric Lie group action on the connected Riemannian manifold $(\Pi^m\SSS^{k-1},g^{\Pi\SSS,k})$ when $k\geq 2$.  As discussed in Section \ref{sec:2.3.2}, the quotient space $\Pi^{m}\SSS^{k-1}/\cO(k)$, when endowed with the quotient distance $d^{\orb,k}$, forms an orbit space. The quotient distance $d^{\orb,k}$ is 
\#\label{equ:dist}
d^{\orb,k}([X],[Y])=\inf_{O\in\cO(k)}d^{\Pi\SSS,k}(X,YO),\quad\forall [X],[Y]\in\Pi^m\SSS^{k-1}/\cO(k).
\#
In particular, $(\Pi^{m}\SSS^{k-1}/\cO(k),d^{\orb,k})$ is a geodesic metric space. Since $\Corr(m,[k])$ is in bijection with this metric space, we can define a distance on $\Corr(m,[k])$ such that the bijection $\Phi_k$ is distance-preserving. With some abuse of notation\footnote{One can distinguish the meaning of $d^{\orb,k}$ according to the context.}, we still denote this distance as $d^{\orb,k}$, which is
\#\label{equ:dist-corr}
d^{\orb,k}(XX^\top,YY^\top)=d^{\orb,k}([X],[Y]),\quad\forall X,Y\in\Pi^{m}\SSS^{k-1}.
\#
The metric space $(\Corr(m,[k]),d^{\orb,k})$ is an orbit space.
%, and in particular a geodesic metric space. 
Proposition \ref{prop:topology} demonstrates that this metric space shares the same topology as the metric space $(\Corr(m,[k]),d^{\rm E})$, where $d^{\rm E}$ is the Euclidean distance.

\begin{proposition}\label{prop:topology}
    The metric spaces $(\Corr(m,[k]),d^{\orb,k})$ and  $(\Corr(m,[k]),d^{\rm E})$ share the same topology. 
\end{proposition}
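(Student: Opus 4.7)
The strategy is to prove the identity map from $(\Corr(m,[k]),d^{\orb,k})$ to $(\Corr(m,[k]),d^{\rm E})$ is a homeomorphism, by a compactness-plus-Hausdorff argument. First, I would note that $\Pi^m\SSS^{k-1}$ is compact as a finite product of unit spheres, and that the factorization map $\pi:X\mapsto XX^\top$ from $\Pi^m\SSS^{k-1}$ onto $\Corr(m,[k])$ is $1$-Lipschitz from $d^{\Pi\SSS,k}$ to $d^{\orb,k}$, since taking $O=I_k$ in \eqref{equ:dist} yields $d^{\orb,k}(XX^\top,YY^\top)\leq d^{\Pi\SSS,k}(X,Y)$. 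Hence $(\Corr(m,[k]),d^{\orb,k})$ is compact as the continuous image of a compact space.

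Next, I would establish that the identity map is continuous from $(\Corr(m,[k]),d^{\orb,k})$ to $(\Corr(m,[k]),d^{\rm E})$. Given $Z_n\to Z$ in $d^{\orb,k}$, I pick factorizations $Z_n=X_nX_n^\top$ and $Z=XX^\top$ with $X_n,X\in\Pi^m\SSS^{k-1}$. Since $\cO(k)$ is compact and $O\mapsto d^{\Pi\SSS,k}(X_nO,X)$ is continuous, the infimum in \eqref{equ:dist} is attained at some $O_n\in\cO(k)$, giving $d^{\Pi\SSS,k}(X_nO_n,X)\to 0$. Because $\Pi^m\SSS^{k-1}$ is a compact embedded submanifold of $\RR^{m\times k}$, its Riemannian distance $d^{\Pi\SSS,k}$ and the Euclidean subspace distance induce the same topology, so $X_nO_n\to X$ entrywise. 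By continuity of the polynomial map $A\mapsto AA^\top$, we conclude $Z_n=(X_nO_n)(X_nO_n)^\top\to XX^\top=Z$ in $d^{\rm E}$.

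Finally, I would invoke the classical fact that a continuous bijection from a compact space onto a Hausdorff space is a homeomorphism. Since $(\Corr(m,[k]),d^{\rm E})$ is trivially Hausdorff, the identity is a homeomorphism, and the two metrics induce the same topology. The only step with substantive content is the second one, but it rests on two routine ingredients: attainment of the orbit infimum via compactness of $\cO(k)$, and the topological equivalence of the intrinsic and ambient Euclidean distances on the compact manifold $\Pi^m\SSS^{k-1}$. The rest is soft point-set topology and requires no genuine obstacle to be overcome.
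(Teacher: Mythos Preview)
Your argument is correct and takes a genuinely different route from the paper's proof. The paper establishes both directions of sequential continuity explicitly: the direction $d^{\orb,k}\Rightarrow d^{\rm E}$ is handled essentially as you do, but for the converse $d^{\rm E}\Rightarrow d^{\orb,k}$ the paper invokes the equivalence of the Euclidean and Bures--Wasserstein topologies on $\Cov(m)$ together with a registration result (Proposition~5.1 of \cite{massart2020quotient}) to produce aligned representatives $\tilde X_n\in[X_n]$ converging to $X$. You bypass this second step entirely by observing that $(\Corr(m,[k]),d^{\orb,k})$ is compact and then applying the compact-to-Hausdorff homeomorphism lemma. Your route is more self-contained and avoids the two external citations; the paper's route, on the other hand, is constructive in that it actually exhibits the convergent representatives in both directions, which may be of independent interest elsewhere. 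Either way the result follows.
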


\begin{proof}
    The proof is provided in the Appendix \ref{sec:topology}.
\end{proof}

\subsection{Stratification}\label{sec:3.2}

Orbit stratification is a significant feature of an orbit space. This section reveals that the strata of the orbit space $\Pi^m\SSS^{k-1}/\cO(k)$ are classified by the rank of matrices. To proceed, we study the stabilizer (or the isotropy group) of a matrix $X\in\Pi^m\SSS^{k-1}$, defined by $\Stab(X)=\{O\in\cO(k)\mid XO=X\}$. By the singular value decomposition, we can write 
\$
X=PJ_{k,\ell}Q, \quad J_{k,\ell}=\begin{pmatrix}
    \diag(\sigma_1,\ldots,\sigma_\ell) & \zero \\
    \zero & \zero
\end{pmatrix}\in\RR^{m\times k},
\$
where $\ell=\rank(X)$ is the rank of $X$, $\sigma_1\geq\ldots\geq\sigma_\ell>0$, $P\in\cO(m)$, and $Q\in\cO(k)$. The stabilizer $H_{k,\ell}$ of the matrix $J_{k,\ell}$ is given in  \cite[Section 3.1]{thanwerdas2023bures}.
%\#\label{equ:H_kl}
%H_{k,\ell}\coloneqq\Stab(J_{k,\ell})=\{\begin{pmatrix}
%    I_{\ell} & \zero\\
%    \zero & U
%\end{pmatrix}\mid U\in\cO(k-\ell)\}.
%\#
The stabilizer of $X$ is $\Stab(X)=Q^{-1}H_{k,\ell}Q$. In particular, two matrices have conjugate stabilizers if and only if they have the same rank. Moreover, $(H_{k,\ell_1})\leq (H_{k,\ell_2})$ if and only if $\ell_1\geq \ell_2$. 

Since the orbit strata of an orbit space are the sets of points that have conjugate stabilizers~\cite{michor2003riemannian}, the orbit strata of $\Pi^m\SSS^{k-1}/\cO(k)$ are given by the sets of points that have the same rank. In particular, each stratum is represented by $\Pi_\ell^m\SSS^{k-1}/\cO(k)$ for some $\ell\leq k$. It is in bijection with the set $\Corr(m,\ell)$ through the map $\Phi_{k,l}$ in \eqref{equ:Phi_kl}. The principal (regular) stratum is given by $\Pi^m_k\SSS^{k-1}/\cO(k)$ or equivalently $\Corr(m,k)$. It is an open and dense subset in the orbit space. 

Let us examine the smooth structures of the orbit strata. Specifically, $\Pi_\ell^m\SSS^{k-1}$ is a smooth submanifold of $\Pi^m\SSS^{k-1}$. The stratum $\Pi_\ell^m\SSS^{k-1}/\cO(k)$ has a smooth manifold structure. In addition, the quotient map
\#\label{equ:pi_kl}
\pi_{k,\ell}\coloneqq\pi_{(H_{k,\ell})}:\Pi^m_\ell\SSS^{k-1}\to \Pi^m_\ell\SSS^{k-1}/\cO(k)
\#
is a smooth fiber bundle with fiber type $\St(k,\ell)$. In particular, it is a smooth submersion. Proposition \ref{prop:smooth} demonstrates that $\Corr(m,\ell)$ is a smooth embedded submanifold of $\RR^{m\times m}$, and the bijection $\Phi_{k,\ell}$ in \eqref{equ:Phi_kl} is indeed a diffeomorphism between two smooth manifolds.  

\begin{proposition}\label{prop:smooth}
    $\Corr(m,\ell)$ is a smooth embedded submanifold of $\RR^{m\times m}$. The bijection $\Phi_{k,\ell}$ in \eqref{equ:Phi_kl} is a diffeomorphism between two smooth manifolds. 
\end{proposition}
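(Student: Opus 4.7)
The plan is to prove both statements at once by showing that $\Phi_{k,\ell}$, viewed as a map into $\RR^{m\times m}$, is a smooth embedding whose image is $\Corr(m,\ell)$; a smooth embedding has as image a smooth embedded submanifold and is a diffeomorphism onto that image, so this simultaneously yields both conclusions. The starting point is to lift the problem to $\Pi^m_\ell\SSS^{k-1}$: define $\tilde\Phi_{k,\ell}:\Pi^m_\ell\SSS^{k-1}\to\RR^{m\times m}$ by $X\mapsto XX^\top$. This map is smooth and $\cO(k)$-invariant, so it factors as $\tilde\Phi_{k,\ell}=\Phi_{k,\ell}\circ\pi_{k,\ell}$. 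Since $\pi_{k,\ell}$ is a smooth submersion (Proposition \ref{prop:2}(2) applied to the stratification in Section \ref{sec:3.2}), the universal property of smooth submersions gives that $\Phi_{k,\ell}$ is smooth.

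The key step is to verify that $\Phi_{k,\ell}$ is an immersion, which reduces to showing $\ker d_X\tilde\Phi_{k,\ell}=\cV_X:=T_X(X\cO(k))$ for every $X\in\Pi^m_\ell\SSS^{k-1}$. The differential sends $V\mapsto XV^\top+VX^\top$. The inclusion $\cV_X\subseteq\ker d_X\tilde\Phi_{k,\ell}$ is immediate: for $V=XA$ with $A\in\cS_{\skew}(k)$, skew-symmetry gives $X(XA)^\top+(XA)X^\top=0$. For the reverse inclusion, I would fix a thin SVD $X=U\Sigma Q^\top$ with $U\in\St(m,\ell)$, $Q\in\St(k,\ell)$, and $\Sigma$ diagonal positive, extend $Q$ to an orthonormal basis by $Q_\perp$, and decompose $V=V_1 Q^\top+V_2 Q_\perp^\top\in T_X\Pi^m_\ell\SSS^{k-1}$. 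The kernel equation becomes $U\Sigma V_1^\top+V_1\Sigma U^\top=0$ (the $Q_\perp$-block of $V$ drops out because $Q_\perp^\top Q=0$), which forces $V_1=UM$ for some $M$ satisfying $M\Sigma+\Sigma M^\top=0$; meanwhile the rank-preservation constraint in $T_X\RR^{m\times k}_\ell$ forces $V_2$ to lie in the column span of $U$. A dimension count then gives $\dim\ker d_X\tilde\Phi_{k,\ell}=\binom{\ell}{2}+\ell(k-\ell)=\ell(2k-\ell-1)/2=\binom{k}{2}-\binom{k-\ell}{2}=\dim\cV_X$, establishing equality.

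Injectivity of $\Phi_{k,\ell}$ is the content of the bijection in \eqref{equ:Phi_kl}. For the topological embedding property, I would invoke Proposition \ref{prop:topology}: the orbit-space and Euclidean distances induce the same topology on $\Corr(m,[k])$, hence on its subset $\Corr(m,\ell)$; because $\Pi^m_\ell\SSS^{k-1}$ is a saturated subset of $\Pi^m\SSS^{k-1}$ under the $\cO(k)$-action, its quotient topology agrees with the subspace topology inherited from $\Pi^m\SSS^{k-1}/\cO(k)$, so $\Phi_{k,\ell}$ is a homeomorphism onto $(\Corr(m,\ell),d^{\rm E})$. A smooth injective immersion that is a topological embedding is a smooth embedding, which completes the proof. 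The main obstacle is the kernel-versus-vertical identification in the immersion step: the kernel equation itself says nothing about the $Q_\perp$-component of $V$, and the rank-preservation constraint baked into $T_X\Pi^m_\ell\SSS^{k-1}$ must be invoked separately to cut this block down to the right dimension for the counts to match.
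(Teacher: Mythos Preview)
Your proof is correct and takes a genuinely different route from the paper. The paper proceeds in two independent steps: first it shows $\Corr(m,\ell)$ is an embedded submanifold of $\RR^{m\times m}$ by viewing it as a regular level set of the diagonal map $\diag_{m,\ell}:\Cov(m,\ell)\to\RR^m$, leaning on the known smooth structure of $\Cov(m,\ell)$; second, it shows $\Phi_{k,\ell}$ is a diffeomorphism by verifying that both $\pi_{k,\ell}$ and $\phi_{k,\ell}(X)=XX^\top$ are smooth submersions and invoking the factoring-through-submersions lemma (Proposition~\ref{prop:differentiable}) in both directions. Your approach instead proves everything at once by establishing that $\Phi_{k,\ell}$ is a smooth embedding: smoothness via the universal property of $\pi_{k,\ell}$, the immersion property via the kernel computation $\ker d_X\tilde\Phi_{k,\ell}=\cV_X$, and the topological embedding via Proposition~\ref{prop:topology} together with openness of the orbit map. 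The paper's approach is shorter because it outsources the submanifold structure to $\Cov(m,\ell)$ and avoids any kernel or dimension counting; the submersion check for $\phi_{k,\ell}$ is a one-line surjectivity verification using the tangent-space description \eqref{equ:tangent_corr_concise}. Your approach is more self-contained---it does not presuppose the $\Cov(m,\ell)$ manifold structure---but pays for this with the SVD-based kernel analysis, where the rank constraint from $T_X\RR^{m\times k}_\ell$ must be invoked to control the $Q_\perp$-block (exactly the obstacle you flagged). Both arguments are clean; the paper's is more modular, yours more intrinsic.
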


\begin{proof}
    The proof is provided in the Appendix \ref{sec:b1}.
\end{proof}

\subsection{Riemannian structure of $\Corr(m,k)$ and compatibility}\label{sec:3.3}

In this section, we examine the Riemannian structure of the principal stratum $\Pi_k^m\SSS^{k-1}/\cO(k)$ of the orbit space $\Pi^m\SSS^{k-1}/\cO(k)$. In the previous section, we showed that it is a dense set in the orbit space, is a smooth manifold, and is diffeomorphic to a smooth submanifold $\Corr(m,k)$ of $\RR^{m\times m}$. In this section, we show that this principal stratum has a natural Riemannian quotient manifold structure, and the induced distance is equal to the orbit space distance in $\Pi^m\SSS^{k-1}/\cO(k)$. 

To start with, let us consider the following group action of $\cO(k)$ on $\Pi_k^m\SSS^{k-1}$:
\#\label{equ:action-2}
(X,O)\in\Pi_k^m\SSS^{k-1}\times\cO(k)\to XO\in\Pi_k^m\SSS^{k-1},
\#
which is the group action \eqref{equ:action} restricted to the submanifold $\Pi_k^m\SSS^{k-1}$. Proposition~\ref{prop:5} shows that the group action \eqref{equ:action-2} is smooth, proper, free, and isometric.

\begin{proposition}\label{prop:5}
    The group action \eqref{equ:action-2} is smooth, proper, free, and isometric. 
\end{proposition}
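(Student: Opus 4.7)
The plan is to verify the four properties in turn, using the fact that \eqref{equ:action-2} is simply the restriction of \eqref{equ:action} to the submanifold $\Pi_k^m\SSS^{k-1}$. First I would note that this submanifold is invariant under the action: if $X\in\Pi_k^m\SSS^{k-1}$ and $O\in\cO(k)$, then $XO$ still has unit-norm rows and the same rank $k$ as $X$ (right multiplication by an invertible matrix preserves rank). Hence \eqref{equ:action-2} is well-defined, and smoothness is inherited from \eqref{equ:action}, which was already observed in Proposition~\ref{prop:4}.

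Next, properness and the isometric property both follow from Proposition~\ref{prop:4}: properness is automatic because $\cO(k)$ is compact (invoking Proposition~\ref{prop:a2} as in the proof of Proposition~\ref{prop:4}), and restricting a proper action of a compact group to a locally closed invariant subset remains proper. The isometric property is preserved because $\Pi_k^m\SSS^{k-1}$ carries the Riemannian submanifold metric from $(\Pi^m\SSS^{k-1},g^{\Pi\SSS,k})$, and the action of $\cO(k)$ by right multiplication already preserved this ambient spherical product metric.

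The only new content is freeness, and this is where I would focus the argument. Suppose $X\in\Pi_k^m\SSS^{k-1}$ and $O\in\cO(k)$ satisfy $XO=X$. Since $\rank(X)=k$ and $X\in\RR^{m\times k}$, the matrix $X$ has full column rank, so $X^\top X\in\RR^{k\times k}$ is positive definite and therefore invertible. Multiplying $XO=X$ on the left by $X^\top$ yields $(X^\top X)O=X^\top X$, and then multiplying by $(X^\top X)^{-1}$ gives $O=I_k$. Equivalently, one can simply invoke the stabilizer computation recalled in Section~\ref{sec:3.2}: for $X$ of rank $\ell=k$, the stabilizer is conjugate to $H_{k,k}$, which is the trivial group. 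Either argument establishes freeness and completes the proof. There is no real obstacle here; the proposition is essentially a packaging of the rank-$k$ specialization of the results already assembled in Sections~\ref{sec:3.1}--\ref{sec:3.2}.
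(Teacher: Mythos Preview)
Your proposal is correct and follows essentially the same approach as the paper: smoothness, properness, and the isometric property are inherited from Proposition~\ref{prop:4} via restriction, and freeness is argued from the full column rank of $X\in\Pi_k^m\SSS^{k-1}$. Your write-up is slightly more detailed (you explicitly verify invariance of the stratum and spell out the $X^\top X$ cancellation), but the logic matches the paper's proof.
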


\begin{proof}
    This is the group action \eqref{equ:action} restricted to the submanifold $\Pi_k^m\SSS^{k-1}$. Thus, by Proposition \ref{prop:4}, it is a smooth, proper, and isometric group action. Furthermore, the group action \eqref{equ:action-2} is free because for any $X\in\Pi_k^m\SSS^{k-1}$ and $O_1,O_2\in\cO(k)$, the equality $XO_1=XO_2$ implies that $O_1=O_2$. This uses the fact that $X\in\Pi_k^m\SSS^{k-1}$ is full-rank (i.e., rank-$k$). 
\end{proof}

Consequently, by Section \ref{sec:2.3.1}, the quotient set $\Pi^m_k\SSS^{k-1}/\cO(k)$ admits a natural Riemannian structure such that the quotient map $\pi_{k,k}$, defined in \eqref{equ:pi_kl}, is a Riemannian submersion. We denote by $g^{\qt,k}$ the Riemannian metric on this manifold. Since $\Pi_k^m\SSS^{k-1}/\cO(k)$ is diffeomorphic to $\Corr(m,k)$ via the map $\Phi_{k,k}$, as demonstrated by Proposition \ref{prop:smooth}, we can endow $\Corr(m,k)$ with a Riemannian metric $g^k$ such that the map $\Phi_{k,k}$ is a Riemannian isomorphism. Since $\Pi_k^m\SSS^{k-1}$ is incomplete, both quotient manifolds  $(\Pi^m_k\SSS^{k-1},g^{\qt,k})$ and $(\Corr(m,k),g^k)$ are not complete. Detailed geometric properties of them will be provided in Section \ref{sec:5}. 

To conclude this subsection, we show in Proposition \ref{prop:Riemannian} that the induced geodesic distance $d^{k}$ on $(\Corr(m,k),g^k)$ equals to the orbit space distance $d^{\orb,k}$ in $\Corr(m,[k])$. In other words, the metric structure of the orbit space $\Corr(m,[k])$ is compatible with the Riemannian structure of its principal stratum $\Corr(m,k)$.

\begin{proposition}\label{prop:Riemannian}
    Let $d^k$ be the geodesic distance in the Riemannian manifold $(\Corr(m,k),g^k)$. Let $d^{\orb,k}$ be the orbit space distance in $\Corr(m,[k])$. Then $d^k$ equals to $d^{\orb,k}$ when restricted to the subset $\Corr(m,k)$. 
\end{proposition}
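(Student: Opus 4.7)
The plan is to establish both inequalities $d^{\orb,k} \leq d^k$ and $d^{\orb,k} \geq d^k$ on the principal stratum $\Corr(m,k) \simeq \Pi^m_k\SSS^{k-1}/\cO(k)$, transported via the diffeomorphism $\Phi_{k,k}$.

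For the direction $d^{\orb,k} \leq d^k$, I would appeal to the standard formula for a Riemannian quotient distance coming from a free, proper, isometric Lie group action (Section \ref{sec:2.3.1}). Because $\pi_{k,k}$ is a Riemannian submersion by Proposition \ref{prop:5}, one has $d^k([X],[Y]) = \inf_{O \in \cO(k)} \tilde d(X, YO)$, where $\tilde d$ is the intrinsic geodesic distance on the open submanifold $\Pi^m_k\SSS^{k-1}$ endowed with the restricted spherical product metric. Any curve in $\Pi^m_k\SSS^{k-1}$ is also a curve in the ambient $\Pi^m\SSS^{k-1}$ of the same Riemannian length, so $\tilde d(X, YO) \geq d^{\Pi\SSS,k}(X, YO)$ for every $O$, and taking infimum over $\cO(k)$ yields $d^k \geq d^{\orb,k}$.

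For the reverse inequality, the strategy is to exhibit a minimizing geodesic realizing $d^{\orb,k}$ that stays in the principal stratum. By compactness of $\cO(k)$ and continuity of $O \mapsto d^{\Pi\SSS,k}(X, YO)$, the infimum defining $d^{\orb,k}([X],[Y])$ is attained at some $O^* \in \cO(k)$. Since $(\Pi^m\SSS^{k-1}, g^{\Pi\SSS,k})$ is connected and complete, there exists a minimizing geodesic $c : [0,1] \to \Pi^m\SSS^{k-1}$ from $X$ to $YO^*$ with $L(c) = d^{\Pi\SSS,k}(X, YO^*) = d^{\orb,k}([X],[Y])$, and its projection $\bar c = \pi \circ c$ is correspondingly a minimizing geodesic in $(\Pi^m\SSS^{k-1}/\cO(k), d^{\orb,k})$ from $[X]$ to $[Y]$. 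I would then invoke the convexity property (Proposition \ref{prop:3}) applied to $\bar c$: by the stabilizer identification of Section \ref{sec:3.2}, $(H_{k,\ell_1}) \leq (H_{k,\ell_2}) \iff \ell_1 \geq \ell_2$; since both endpoints of $\bar c$ have rank $k$ (trivial stabilizer $H_{k,k}$), every interior point must have rank at least $k$, hence exactly $k$. Consequently $c(t) \in \Pi^m_k\SSS^{k-1}$ for $t \in (0,1)$, and together with the full-rank endpoints $X, YO^*$ the entire curve $c$ lies in $\Pi^m_k\SSS^{k-1}$. This yields $\tilde d(X, YO^*) \leq L(c) = d^{\orb,k}([X],[Y])$, whence $d^k([X],[Y]) \leq d^{\orb,k}([X],[Y])$.

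The main obstacle is the convexity step: one must rule out the possibility that a length-minimizing path between two full-rank orbits dips into strictly lower-rank strata, where the intrinsic Riemannian geometry of the incomplete manifold $\Pi^m_k\SSS^{k-1}$ is unavailable. Proposition \ref{prop:3}, combined with the rank-stabilizer correspondence worked out in Section \ref{sec:3.2}, closes precisely this gap; without it the incompleteness of $\Pi^m_k\SSS^{k-1}$ would leave room for a genuine shortcut through lower-rank matrices, and the two distances need not coincide.
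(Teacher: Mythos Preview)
Your proof is correct and takes a genuinely different route from the paper's. The paper argues in one line that the intrinsic geodesic distance on the open submanifold $\Pi^m_k\SSS^{k-1}$ coincides with the restriction of $d^{\Pi\SSS,k}$, and then concludes immediately that the two quotient distances agree. You instead establish the two inequalities separately: the easy direction $d^{\orb,k}\leq d^k$ follows from $d^{\Pi\SSS,k}\leq\tilde d$ pointwise, while for the harder direction $d^k\leq d^{\orb,k}$ you lift a registered minimizing geodesic and invoke the convexity property (Proposition~\ref{prop:3}) together with the rank--stabilizer correspondence of Section~\ref{sec:3.2} to force the lift to remain in the full-rank locus. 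Your approach is in fact the more careful one: the paper's intermediate assertion that the geodesic distance in $\Pi^m_k\SSS^{k-1}$ equals that in $\Pi^m\SSS^{k-1}$ is stated without justification and actually fails when $m=k$, since the rank-deficient set then has codimension~$1$ and one can exhibit pairs of full-rank points whose ambient minimizing geodesic crosses it. The convexity argument you employ is precisely what the paper deploys later, in Propositions~\ref{prop:geodesic} and~\ref{prop:5.2}; you are effectively front-loading that idea to close a gap the paper's short proof leaves open.
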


\begin{proof}
    Equivalently, we only need to show that the distance $d^{\qt,k}$ {induced} by the Riemannian manifold $(\Pi_k^m\SSS^{k-1},g^{\qt,k})$ is equal to the quotient distance $d^{\orb,k}$ in the orbit space $\Pi^m\SSS^{k-1}/\cO(k)$. By Section \ref{sec:2.3.1}, we know $d^{\qt,k}$ is the quotient distance of the geodesic distance in $\Pi^m_k\SSS^{k-1}$. By Section \ref{sec:3.1}, we know $d^{\orb,k}$ is the quotient distance of the geodesic distance in $\Pi^m\SSS^{k-1}$. Since the geodesic distance in $\Pi^m_k\SSS^{k-1}$ equals to the geodesic distance in $\Pi^m\SSS^{k-1}$, their quotient distances are the same. This proves that $d^{\qt,k}$ equals to $d^{\orb,k}$ when restricted to the same domain.  
\end{proof}

\subsection{Different metrics $d^{\orb,k}$ for varying $k$}\label{sec:3.4}

In previous subsections, we have studied the orbit space $(\Corr(m,[k]),d^{\orb,k})$ for a fixed $k$. We have shown that it has a Euclidean subspace topology and its stratum $\Corr(m,\ell)$ can be smoothly embedded into the Euclidean space $\RR^{m\times m}$. One may expect that when $k$ is varying, the distances $d^{\orb, k}$ remain unchanged. However,  this is not true. 

In Proposition \ref{prop:counterexample}, we show that when $k$ varies, the distance $d^{\orb,k}$ also changes\footnote{This may not be a serious problem, considering the principal stratum is an open and dense set in the orbit space, and its Riemannian structure is compatible with the orbit space.}.  
%In general, when $k$ varies, the distance $d^{\orb,k}$ will also change. 
Combining with Proposition \ref{prop:Riemannian}, we know that the distance $d^{\orb,k}$ in the orbit space $\Corr(m,[k])$ is only compatible with the Riemannian structure of its principal stratum $\Corr(m,k)$. For $\ell<k$, the Riemannian structure of $\Corr(m,\ell)$ will induce a distance $d^{\orb,\ell}$, which is different from $d^{\orb,k}$ by Proposition \ref{prop:counterexample}. 
%In addition, Proposition \ref{prop:counterexample} demonstrates that $d^{\orb,\ell}$ is no smaller than $d^{\orb,k}$. 

%In practice,
Proposition \ref{prop:counterexample} emphasizes the importance of choosing an appropriate $k$ initially. Given that $\Corr(m)=\Corr(m,[m])$, meaning all $m\times m$ correlation matrices possess a rank of at most $m$, the default recommendation is to use $k=m$. However, under certain conditions, like bounded ranks, alternative choices for $k$ may be considered.

\begin{proposition}\label{prop:counterexample}
    Let $d^{\orb,k}$ be the distance in the orbit space $\Corr(m,[k])$. For $k_1<k_2\leq m$, we have %$d^{\orb,k_2}\leq d^{\orb,k_1}$ in the sense that
    \$
    d^{\orb,k_2}(Z_1,Z_2)\leq d^{\orb,k_1}(Z_1,Z_2),\quad \forall Z_1,Z_2\in\Corr(m,[k_1]).
    \$
    Moreover, $d^{\orb,k_1}$ and $d^{\orb,k_2}$ are different, even when constrained to $\Corr(m,[k_1])$.
\end{proposition}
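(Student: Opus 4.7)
The plan is to establish the inequality via a zero-padding construction that lifts factorizations from $\Pi^m\SSS^{k_1-1}$ into $\Pi^m\SSS^{k_2-1}$, and then to exhibit a small explicit pair of rank-one correlation matrices for which the strict inequality holds.

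For the inequality, let $Z_1,Z_2\in\Corr(m,[k_1])$ with factorizations $Z_i=X_iX_i^\top$, $X_i\in\Pi^m\SSS^{k_1-1}$. I would define $\tilde X_i$ by appending $k_2-k_1$ zero columns to $X_i$; the rows retain unit $\ell_2$-norm, so $\tilde X_i\in\Pi^m\SSS^{k_2-1}$ and $\tilde X_i\tilde X_i^\top=Z_i$. For any $O'\in\cO(k_1)$, the block-diagonal matrix $O=\diag(O',I_{k_2-k_1})\in\cO(k_2)$ satisfies $\tilde X_1O=[X_1O',\zero]$, so its rows have the same inner products with the rows of $\tilde X_2$ as the rows of $X_1O'$ have with the rows of $X_2$. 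By \eqref{equ:spherical-dist} this gives $d^{\Pi\SSS,k_2}(\tilde X_1O,\tilde X_2)=d^{\Pi\SSS,k_1}(X_1O',X_2)$. Taking the infimum over $O'\in\cO(k_1)$ on the right yields $d^{\orb,k_1}(Z_1,Z_2)$, while on the left I infimize over the larger group $\cO(k_2)$, which can only be smaller, giving $d^{\orb,k_2}(Z_1,Z_2)\leq d^{\orb,k_1}(Z_1,Z_2)$.

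For the strict inequality I would construct a counterexample with $m=3$, $k_1=1$, and $k_2=2$. A rank-one correlation matrix in $\Corr(3,[1])$ is of the form $xx^\top$ with $x\in\{-1,+1\}^3$, and the $\cO(1)=\{\pm 1\}$ action amounts to a global sign flip. Taking $x=(1,1,1)^\top$ and $y=(1,1,-1)^\top$, one readily checks $d^{\orb,1}(Z_1,Z_2)=\pi$. Embedding as $\tilde X=[x,\zero]$ and $\tilde Y=[y,\zero]$ in $\Pi^3\SSS^1$ and applying a planar rotation $O_\theta\in\cO(2)$ of angle $\theta$ to $\tilde X$, the first two rows of $\tilde XO_\theta$ have inner product $\cos\theta$ with the corresponding rows of $\tilde Y$, while the third has inner product $-\cos\theta$. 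Hence the squared spherical product distance is $2\theta^2+(\pi-\theta)^2$, minimized at $\theta=\pi/3$ with value $2\pi^2/3$; reflections in $\cO(2)$ yield the same minimum. Thus $d^{\orb,2}(Z_1,Z_2)\leq\pi\sqrt{2/3}<\pi$.

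The inequality half is essentially mechanical once one notices that the block-diagonal embedding of $\cO(k_1)$ into $\cO(k_2)$, combined with zero-padding factorizations, preserves the spherical product distance. The main obstacle is the strict-inequality half: one must argue that $\cO(k_2)$ offers genuinely new off-block rotations that coordinate the row-wise spherical alignments more efficiently than any element of $\cO(k_1)$ can. The rank-one example above makes this concrete, as the optimal rotation by $\pi/3$ distributes the angular mismatch evenly across all three rows rather than concentrating it in a single row, which is what any lift from $\cO(k_1)$ is forced to do.
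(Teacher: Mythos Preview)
Your monotonicity argument is correct and essentially identical to the paper's: both zero-pad the factors and observe that the block-diagonal embedding $\cO(k_1)\hookrightarrow\cO(k_2)$ preserves the spherical product distance, so the infimum over the larger group can only be smaller.

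The strict-inequality half, however, has a genuine gap. The proposition asserts that $d^{\orb,k_1}\neq d^{\orb,k_2}$ for \emph{every} $k_1<k_2\leq m$, so you must exhibit a witnessing pair for each such triple $(m,k_1,k_2)$. Your example only treats $m=3$, $k_1=1$, $k_2=2$; besides the fact that $k_1=1$ lies outside the paper's standing assumption $k\geq 2$, the construction does not extend. Indeed, if you reuse your rank-one factors $\tilde X=[x,\zero]$, $\tilde Y=[y,\zero]$ inside $\Pi^m\SSS^{k_1-1}$ for any $k_1\geq 2$, then for any $O\in\cO(k_1)$ only the first column of $O$ affects $\tilde XO$, so the row-wise inner products with $\tilde Y$ depend solely on $O_{11}\in[-1,1]$. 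The resulting optimization $\min_u\,2\arccos^2(u)+\arccos^2(-u)$ is identical for all $k_1\geq 2$, and padding further to $k_2>k_1$ gives exactly the same minimum. Thus $d^{\orb,k_1}=d^{\orb,k_2}$ on your pair whenever $k_1\geq 2$, and your example fails to separate them.

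The paper avoids this by building the counterexample at \emph{full rank $k_1$}: it takes $X=\begin{pmatrix}-1&\zero\\1&\zero\\\zero&W\end{pmatrix}$, $Y=\begin{pmatrix}1&\zero\\1&\zero\\\zero&W\end{pmatrix}$ with $W\in\Pi^{m-2}_{k_1-1}\SSS^{k_1-2}$ of rank $k_1-1$. The block $W$ is what pins down the $(2{:}k_1,2{:}k_1)$ submatrix of any optimal $O^*\in\cO(k_1)$, forcing $d^{\orb,k_1}>\pi/\sqrt2$; meanwhile the extra dimension available in $\cO(k_1+1)$ allows a swap of the first two coordinates without disturbing the $W$-block, yielding $d^{\orb,k_1+1}\leq\pi/\sqrt2$. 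The essential idea you are missing is that the witnessing pair must have rank high enough to exhaust the rotational freedom of $\cO(k_1)$.
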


\begin{proof}
    The proof is provided in the Appendix \ref{sec:b2}.
\end{proof}

\section{Metric geometry of $\Corr(m,[k])\simeq \Pi^m\SSS^{k-1}/\cO(k)$}\label{sec:4}

In this section, we study detailed geometric properties and corresponding computations about the orbit space $\Corr(m,[k])\simeq\Pi^m\SSS^{k-1}/\cO(k)$. Before delving into our contents, let us briefly discuss \cite{journee2010low,boumal2015riemannian}, which also examined general optimization problems over the set of correlation matrices. In \cite{journee2010low}, the authors adopted the matrix factorization viewpoint and proposed a Riemannian trust-region method for optimization. In \cite{boumal2015riemannian}, the author studied the convex relaxation of the optimization problem over low-rank correlation matrices, and proposed the Riemannian staircase algorithm that takes optimization on low-rank correlation matrices as a building block. Unlike \cite{journee2010low,boumal2015riemannian}, which consider general optimization objectives, our discussions in this section only involve specific optimization objectives, for which simpler optimization algorithms can be designed. Specifically, in Section~\ref{sec:4.1}, we introduce a Riemannian gradient descent algorithm to compute the orbit space distance $d^{\orb,k}$. It essentially solves an alignment problem. In Section \ref{sec:4.2}, we establish some theories about the minimizing geodesics in the orbit space. We show that any minimizing geodesic has constant rank on the interior of the segment. Finally, in Section \ref{sec:4.3}, we address the problem of computing the weighted \Frechet mean in the orbit space. We develop an alternating minimization strategy for the computation. In the remaining part of this section, $m,k$ are held constant and $k\geq 2$.

\subsection{Distance}\label{sec:4.1}

We first examine the computation of the orbit space distance $d^{\orb,k}$ on $\Pi^m\SSS^{k-1}/\cO(k)$. In practice, this enables us to compute the pairwise distance matrix between a set of samples, cluster samples, and explore low-dimensional structures. 
Suppose $[X],[Y]$ are two points in the orbit space $\Pi^m\SSS^{k-1}/\cO(k)$. The squared distance between them is given by
\$
(d^{\orb,k}([X],[Y]))^2=\inf_{O\in\cO(k)}\ell_{X,Y}(O),
\$
where
\$
\ell_{X,Y}(O)\coloneqq\sum_{i=1}^m\arccos^2(\inner{(XO)_{i\cdot}}{Y_{i\cdot}}).
\$
This minimization problem does not admit a closed-form solution due to the nonlinear function $\arccos(\cdot)$. However, by employing Riemannian optimization techniques, we can efficiently compute this distance numerically. Specifically, we propose a Riemannian gradient descent method for computing the distance $d^{\orb,k}$. Fix $X,Y\in\Pi^m\SSS^{k-1}$. Initialize $O_0=\argmin_{O\in\cO(k)}\norm{XO-Y}_{\rm F}^2$, which has a closed-form solution given in  \cite{thanwerdas2023bures,massart2020quotient}. Then we compute the Riemannian gradient of $\ell_{X,Y}$ at $O$ as follows:
\$
\grad^{\cO(k)} \ell_{X,Y}(O)=\cP_{O}\left(\grad^E\ell_{X,Y}(O)\right),
\$
where $\cP_{O}(V)=O\ \skew(O^\top V)$ is to project $V\in\RR^{m\times m}$ to the tangent space of $\cO(m)$ at $O$, $\skew(W)=\frac{1}{2}(W-W^\top)$ is the skew-symmetric transformation, and $\grad^E(\cdot)$ is the standard Euclidean gradient in $\RR^{m\times m}$. In particular, we have
\#\label{equ:grad:dist}
\grad^{\cO(k)}\ell_{X,Y}(O)=O\ \skew\left(O^\top \sum_{i=1}^m\frac{-\arccos(\inner{(XO)_{i\cdot}{Y_{i\cdot}}}{})}{\sqrt{1-(\inner{(XO)_{i\cdot}}{Y_{i\cdot}})^2}}\cdot X_{i\cdot}Y_{i\cdot}^\top \right),
\#
where $X_{i\cdot},Y_{i\cdot}\in\RR^{k}$ are column vectors. At the $t$-th step, we compute the Riemannian gradient $\xi_t=\grad^{\cO(k)}\ell_{X,Y}(O_t)$ and choose a step size $\alpha_t$ via the Armijo rule~\cite{lee2012introduction}. Then we update $O_{t+1}$ as follows:
\#\label{equ:retraction}
O_{t+1}=R_{O_t}(\alpha_t\xi_t)
\#
where $R_{O_t}(\xi)={\rm qf}(O_t+\xi)$ is the retraction operation and ${\rm qf}(\cdot)$ returns the orthogonal matrix in the QR factorization. This procedure is repeated $T$ times and we return $\ell_{X,Y}(O_{T})$ as the squared distance $(d^{\orb,k}([X],[Y]))^2.$ The algorithm is summarized in Algorithm \ref{alg:1} and its convergence property is provided in Proposition \ref{prop:alg:1}.

\begin{algorithm}[t]\label{alg:1}
\caption{Riemannian gradient descent for distance on $\Pi^m\SSS^{k-1}/\cO(k)$}
\begin{algorithmic}
\REQUIRE $[X],[Y]\in\Pi^m\SSS^{k-1}/\cO(k)$, $T$.
\ENSURE squared distance $(d^{\orb,k}([X],[Y]))^2$.
\STATE Fix $X,Y$. 
\STATE Initialize $O_0\in\cO(k)$.
\WHILE{$t< T$}
\STATE Compute the Riemannian gradient, $\xi_t=\grad^{\cO(k)}\ell_{X,Y}(O_t)$, by \eqref{equ:grad:dist}.
\STATE Choose a step size $\alpha_t$ via the Armijo rule.
\STATE Update $O_{t+1}=R_{O_t}(\alpha_t\xi_t)$.
\ENDWHILE
\RETURN $\ell_{X,Y}(O_T)$.
\end{algorithmic}
\end{algorithm}

\begin{proposition}\label{prop:alg:1}
    Let $\{O_t\}$ be an infinite sequence of iterates generated by Algorithm \ref{alg:1}. Then every accumulation point of $\{O_t\}$ is a critical point of $\ell_{X,Y}(\cdot)$. 
\end{proposition}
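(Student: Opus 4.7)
The plan is to reduce Proposition~\ref{prop:alg:1} to a standard convergence theorem for Riemannian gradient descent with Armijo line search on a compact manifold, as developed in \cite{absil2008optimization} (see their Theorem~4.3.1). There are three ingredients to verify: that $\cO(k)$ is a compact Riemannian manifold, that the retraction in \eqref{equ:retraction} is a valid retraction, and that the objective $\ell_{X,Y}$ is continuously differentiable on the relevant iterate trajectory.

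First, I would recall that $\cO(k)$ is a compact embedded submanifold of $\RR^{k\times k}$ with the metric inherited from the Frobenius inner product; hence sublevel sets of $\ell_{X,Y}$ on $\cO(k)$ are automatically compact. Second, I would point out that $R_O(\xi)=\mathrm{qf}(O+\xi)$ is the standard QR-based retraction on $\cO(k)$; its smoothness and the first-order rigidity condition $dR_O(0)=\mathrm{Id}$ are classical (e.g.\ Example~4.1.3 in \cite{absil2008optimization}). Third, I would note that on the open subset
\[
\mathcal{U}=\{O\in\cO(k):\ |\langle (XO)_{i\cdot},Y_{i\cdot}\rangle|<1\text{ for all }i\leq m\},
\]
the cost $\ell_{X,Y}$ is $C^\infty$ and its Riemannian gradient is exactly \eqref{equ:grad:dist}.

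With these facts in place, the Armijo rule guarantees (i) a sufficient decrease of $\ell_{X,Y}$ at each iteration, and (ii) that the step size $\alpha_t$ is bounded below whenever $\|\xi_t\|$ is bounded away from zero. Applying Theorem~4.3.1 of \cite{absil2008optimization} verbatim then yields $\liminf_{t\to\infty}\|\xi_t\|=0$, so that every accumulation point $O^\star$ of $\{O_t\}$ satisfies $\mathrm{grad}^{\cO(k)}\ell_{X,Y}(O^\star)=\zero$, i.e.\ is a critical point.

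The only delicate issue is that $\arccos^2$ fails to be $C^1$ at $-1$, so $\ell_{X,Y}$ is not globally smooth on $\cO(k)$ and the standard theorem does not apply directly if the iterates leave $\mathcal{U}$. The main obstacle, therefore, is ensuring the sequence remains in $\mathcal{U}$. I would handle this by noting that any $O\notin\mathcal{U}$ has $\ell_{X,Y}(O)\geq \pi^2$, whereas from the initialization $O_0$ (the Procrustes minimizer, for which $\ell_{X,Y}(O_0)$ is bounded in terms of the Frobenius distance $\|X-Y\|_{\mathrm{F}}$ and strictly less than $\pi^2$ on a dense set of inputs) the monotone descent property of the Armijo rule keeps $\ell_{X,Y}(O_t)\leq \ell_{X,Y}(O_0)$, so $\{O_t\}\subseteq\mathcal{U}$. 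For pathological inputs where $O_0\notin\mathcal{U}$, one can perturb the initialization infinitesimally before invoking the theorem, since $\mathcal{U}$ is open and dense in $\cO(k)$.
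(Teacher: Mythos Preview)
Your approach is exactly the paper's: its entire proof is the single sentence ``This is a corollary of Theorem~4.3.1 in \cite{absil2008optimization}.'' You go further than the paper by flagging the non-smoothness of $\arccos^2$ and trying to confine the iterates to a region of differentiability; the paper ignores this issue entirely. Two small corrections to that extra discussion: (i) only the value $-1$ is actually problematic, since $t\mapsto\arccos^2(t)$ extends to a $C^1$ function at $t=1$ (the apparent singularity in \eqref{equ:grad:dist} is removable there), so your set $\mathcal{U}$ should impose $\langle(XO)_{i\cdot},Y_{i\cdot}\rangle>-1$ rather than $|\cdot|<1$, and then your claim ``$O\notin\mathcal{U}\Rightarrow\ell_{X,Y}(O)\geq\pi^2$'' becomes correct; (ii) the monotone-descent argument traps the iterates in this set only when $\ell_{X,Y}(O_0)<\pi^2$, which the Procrustes initialization does not guarantee for arbitrary $X,Y$, and an infinitesimal perturbation of $O_0$ does not lower the cost below $\pi^2$ in general. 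These are genuine loose ends, but they are loose ends the paper itself leaves untied.
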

\begin{proof}
    This is a corollary of Theorem 4.3.1 in \cite{absil2008optimization}.
\end{proof}

\subsection{Geodesics}\label{sec:4.2}

In this section, we provide some properties regarding the minimizing geodesics in the orbit space $(\Pi^m\SSS^{k-1}/\cO(k),d^{\orb,k})$. First, by Section \ref{sec:2.3.1}, we know this orbit space is a complete metric space and a length space. In particular, any pair of points in the space can be connected by a minimizing geodesic. Next, we show that any minimizing geodesic has constant rank on the interior of the segment in Proposition \ref{prop:geodesic}.

\begin{proposition}\label{prop:geodesic}  
    Let $\gamma:[0,1]\to\Pi^m\SSS^{k-1}/\cO(k)$ be a minimizing geodesic from $[X]$ to $[Y]$. If $\gamma_t=[X_t]$, then $\rank(X_t)$ equals to a constant $r$ for all $t\in(0,1)$, and \$
    r\geq \max\{\rank(X),\rank(Y)\}.
    \$
\end{proposition}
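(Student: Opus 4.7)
The plan is to derive the statement as a direct consequence of the convexity property of orbit spaces (Proposition \ref{prop:3}) together with the rank-based classification of strata established in Section \ref{sec:3.2}. The key observation is that rank is constant on each orbit (since right multiplication by an orthogonal matrix preserves rank), so $\rank(X_t)$ depends only on $[X_t]=\gamma(t)$, and the stratum of $\gamma(t)$ is exactly the one of rank $\rank(X_t)$. Moreover, from Section \ref{sec:3.2}, the order on isotropy classes satisfies $(H_{k,\ell_1})\leq(H_{k,\ell_2})$ if and only if $\ell_1\geq \ell_2$, so the partial order on strata reverses the order on ranks.

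To obtain the inequality $r\geq \max\{\rank(X),\rank(Y)\}$, I would apply Proposition \ref{prop:3} directly to the minimizing geodesic $\gamma$: denoting by $(H_t)$ the isotropy class of $\gamma(t)$, the convexity property yields $(H_t)\leq (H_0)$ and $(H_t)\leq (H_1)$ for every $t\in(0,1)$. Translating via the rank-reversing correspondence gives
\[
\rank(X_t)\geq \rank(X)\quad\text{and}\quad \rank(X_t)\geq \rank(Y)\qquad\forall t\in(0,1),
\]
which is exactly the lower bound on $r$.

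For the constancy on $(0,1)$, I would invoke the standard triangle-inequality argument showing that any subsegment of a minimizing curve is itself minimizing: if $\gamma|_{[0,1]}$ realizes $d^{\orb,k}([X],[Y])$ then for any $0\leq s_1< s_2\leq 1$, $\gamma|_{[s_1,s_2]}$ realizes $d^{\orb,k}(\gamma(s_1),\gamma(s_2))$, hence (after reparameterization) is a minimizing geodesic. Now fix arbitrary $t_0,t_1\in(0,1)$ with $t_0<t_1$. Applying Proposition \ref{prop:3} to $\gamma|_{[0,t_1]}$ at its interior point $t_0$ gives $\rank(X_{t_0})\geq \rank(X_{t_1})$; applying it to $\gamma|_{[t_0,1]}$ at its interior point $t_1$ gives $\rank(X_{t_1})\geq \rank(X_{t_0})$. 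Hence $\rank(X_{t_0})=\rank(X_{t_1})$, so $\rank(X_t)$ is constant on $(0,1)$.

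The substantive work has already been done in Propositions \ref{prop:2} and \ref{prop:3} together with the identification of strata in Section \ref{sec:3.2}, so I do not expect any real obstacle beyond checking the direction of the order correspondence between isotropy classes and ranks, and the subsegment minimality used to upgrade the convexity inequality into constancy. No additional geometric computation on $\Pi^m\SSS^{k-1}$ is needed.
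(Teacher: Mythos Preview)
Your proposal is correct and uses essentially the same approach as the paper: both arguments derive constancy and the lower bound from Proposition~\ref{prop:3} combined with the correspondence $(H_{k,\ell_1})\leq(H_{k,\ell_2})\iff \ell_1\geq \ell_2$ from Section~\ref{sec:3.2}, applied to subsegments of $\gamma$. The only organizational difference is that the paper first picks a point $t_0$ achieving the maximum rank $r=\max_{t\in[0,1]}\rank(X_t)$ and then compares every interior point to $t_0$ via the two subsegments $[0,t_0]$ and $[t_0,1]$, whereas you compare arbitrary pairs $t_0<t_1$ in $(0,1)$ directly; both routes rest on the same subsegment-minimality fact you made explicit.
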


\begin{proof}
    Let $r=\max_{t\in[0,1]}\rank(X_t)$ and  $t_0\in[0,1]$ such that $\rank(X_{t_0})=r$. By Proposition \ref{prop:3}, we know for all $t\in(0,t_0)\cup(t_0,1)$, 
    \$
    (H_{k,\rank(X_t)})\leq (H_{k,\rank(X_{t_0})}),
    \$
    where $H_{k,\ell}$ is the stabilizer. By discussions in Section \ref{sec:3.2}, we know $(H_{k,\ell_1})\leq(H_{k,\ell_2})$ if and only if $\ell_1\geq \ell_2$. Therefore, $\rank(X_t)\geq r$ for all $t\in (0,t_0)\cup(t_0,1)$. By definition of $r$, we know $\rank(X_t)=r$ for all $t\in(0,1)$ and 
    \$
    r\geq \max\{\rank(X_0),\rank(X_1)\}=\max\{\rank(X),\rank(Y)\},
    \$
    which concludes the proof.
\end{proof}

\subsection{\Frechet mean}\label{sec:4.3}

In practice, given a set of points $\{x^i\}_{i=1}^n$ in a metric space $(\cM,d)$, we are often interested in computing the \Frechet mean of these points, which is given by\footnote{When there exist multiple minimizers, our goal is to find any one of them.} 
\$
\bar x=\argmin_{x\in\cM}\sum_{i=1}^nd^2(x^i,x).
\$
This generalizes the Euclidean mean to a metric space. In some applications such as \Frechet regression~\cite{petersen2019frechet}, we also show interests in computing a weighted version of the \Frechet mean. Given a vector of weights $w=(w_i)_{i=1}^n$, the weighted \Frechet mean of $\{x^i\}_{i=1}^n$ is given by
\#\label{equ:Frechetmean}
\bar x=\argmin_{x\in\cM}\sum_{i=1}^nw_i\cdot d^2(x^i,x).
\#
Unlike the Euclidean case, the existence and uniqueness of the \Frechet mean are not trivial. There are some works studying these questions in certain class of manifolds~\cite{karcher1977riemannian,kendall1990probability}. In addition, when the metric space $\cM$ is compact, the distance $d(\cdot,\cdot)$ is continuous, and the weights $w_i\geq0$, it can be shown that the weighted \Frechet mean exists. The uniqueness of the  \Frechet mean is more subtle, the study of which is beyond the scope of the current paper.

In this section, we will explore the \Frechet mean of $\{[X^i]\}_{i=1}^n$ in the orbit space $(\Pi^m\SSS^{k-1}/\cO(k),d^{\orb,k})$. Since this metric space is compact and the distance is continuous, the weighted \Frechet mean exists when the weights $w_i\geq 0$. However, computing the weighted \Frechet mean is challenging. In the sequel, we will present an alternating minimization algorithm that efficiently solves this computation problem.

\begin{algorithm}[t]
\caption{Alternating minimization for the \Frechet mean in $\Pi^m\SSS^{k-1}/\cO(k)$}\label{alg:2}
\begin{algorithmic}
\REQUIRE $\{[X^i]\}_{i=1}^n\subseteq\Pi^m\SSS^{k-1}/\cO(k)$, $w_i> 0$, and $S$.
\ENSURE the weighted \Frechet mean of $\{[X^i]\}$.
\STATE Fix $\{X^i\}$.
\STATE Initialize $X_0\in\Pi^m\SSS^{k-1}$ and $\{O^i_0\} \subseteq\cO(k)$.
\WHILE{$s<S$}
\STATE Fix $X_s$, and update $O^i_{s+1}$ by solving  \eqref{equ:O} for all $i$.
\STATE Fix $\{O^i_{s+1}\}$, and update $X_{s+1}$ by solving  \eqref{equ:X}.
\ENDWHILE
\RETURN $[X_S]$.
\end{algorithmic}
\end{algorithm}

Our algorithm is based on the following observation. Given $\{[X^i]\}_{i=1}^n$ in the orbit space $\Pi^m\SSS^{k-1}/\cO(k)$ and weights $w_i>0$, the weighted \Frechet mean is given by $[\bar X]$, where 
\$
\bar X&=\argmin_{X\in\Pi^m\SSS^{k-1}}\sum_{i=1}^n w_i\cdot (d^{\orb,k}([X^i],[X]))^2\\
&\overset{(i)}{=}\argmin_{X\in\Pi^m\SSS^{k-1}}\sum_{i=1}^n w_i\cdot \inf_{O^i\in\cO(k)}(d^{\Pi\SSS,k}(X^i O^i,X))^2\\
&\overset{(ii)}{=}\argmin_{X\in\Pi^m\SSS^{k-1}}\inf_{\{O^i\}\subseteq\cO(k)}\sum_{i=1}^n w_i\cdot (d^{\Pi\SSS,k}(X^i O^i,X))^2.
\$
Here equality (i) holds due to \eqref{equ:dist} and equality (ii) holds because $w_i>0$. As a result, to find the weighted \Frechet mean $[\bar X]$, it suffices to solve the following problem:
\#\label{equ:Frechet_mean}
\bar X,\{\bar O^i\}=\argmin_{X\in\Pi^m\SSS^{k-1},\{O^i\}\subseteq \cO(k)}\cL^{\rm mean}(X,\{O_i\}),
\#
where
\$
\cL^{\rm mean}(X,\{O_i\})\coloneqq \sum_{i=1}^n w_i\cdot (d^{\Pi\SSS,k}(X^i O^i,X))^2.
\$
We propose an alternating minimization strategy to solve this optimization problem. Specifically, we will alternatively update $X$ and $\{O^i\}$ with the other fixed. Initialize $X_0\in\Pi^m\SSS^{k-1}$ and $\{O^i_0\}\subseteq\cO(k)$. At the $s$-th step, we will first update $\{O^i_{s+1}\}$ when fixing $X_s$. This problem is separable, and for each $i$, we solve
\#\label{equ:O}
O^i_{s+1}=\argmin_{O^i\in\cO(k)}w_i\cdot (d^{\Pi\SSS,k}(X^i O^i,X_s))^2.
\#
This is the distance computation problem which has been addressed by Algorithm~\ref{alg:1}. Then, we fix $\{O^i_{s+1}\}$ and update $X_{s+1}$ by solving the following optimization problem:
\#\label{equ:X}
X_{s+1}=\argmin_{X\in\Pi^m\SSS^{k-1}}\cL^{\rm mean}(X,\{ O^i_{s+1}\}).
\#
This optimization problem can be regarded as computing the weighted \Frechet mean of $\{X^iO^i_{s+1}\}$ with weights $\{w_i\}$ in the spherical product manifold $\Pi^m\SSS^{k-1}$. We can solve it by Riemannian gradient descent, whose details are postponed to Section~\ref{sec:4.3.1}. By alternatively updating $X_s$ and $\{O^i_s\}$ upon convergence, we obtain the main algorithm for computing the weighted \Frechet mean of $\{[X^i]\}$ in the orbit space. This is summarized in Algorithm \ref{alg:2} and its property is established in Proposition \ref{prop:alg:2}. The convergence of the iterates $\{X_s,\{O^i_s\}\}$ is more challenging, which we leave to future work.

\begin{proposition}\label{prop:alg:2}
    Let $\{X_s,\{O^i_s\}\}$ be an infinite sequence of iterates generated by Algorithm \ref{alg:2}. Then the loss function decreases:
    \$
    \cL^{\rm mean}(X_{s+1},\{O^i_{s+1}\})\leq \cL^{\rm mean}(X_s,\{O^i_{s+1}\}),\quad\forall s.
    \$
    In particular, the loss function $\cL^{\rm mean}(X_{s},\{O^i_s\})$ converges as $s$ tends to $\infty$.  
\end{proposition}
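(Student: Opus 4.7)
The proof should follow directly from the structure of the alternating minimization scheme, so my plan is to unpack the definitions in \eqref{equ:O} and \eqref{equ:X} and chain two monotonicity inequalities. The main idea is that each of the two blocks of the update is set up as an exact minimization of $\cL^{\rm mean}$ holding the other block fixed, so that each half-step can only decrease (or leave unchanged) the objective value.

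First I would establish the stated inequality $\cL^{\rm mean}(X_{s+1},\{O^i_{s+1}\})\le \cL^{\rm mean}(X_s,\{O^i_{s+1}\})$ as an immediate consequence of \eqref{equ:X}: the iterate $X_{s+1}$ is defined as a minimizer of $X\mapsto \cL^{\rm mean}(X,\{O^i_{s+1}\})$ over $X\in\Pi^m\SSS^{k-1}$, so evaluating this minimizer against the admissible competitor $X=X_s$ yields the desired inequality. Next I would exhibit the companion inequality $\cL^{\rm mean}(X_s,\{O^i_{s+1}\})\le \cL^{\rm mean}(X_s,\{O^i_s\})$. Because the objective $\cL^{\rm mean}(X_s,\{O^i\})$ decouples over the index $i$ as a weighted sum $\sum_{i}w_i\,(d^{\Pi\SSS,k}(X^iO^i,X_s))^2$ and each $O^i_{s+1}$ is defined in \eqref{equ:O} to be the minimizer of the $i$-th summand, summing the pointwise inequalities $w_i\,(d^{\Pi\SSS,k}(X^iO^i_{s+1},X_s))^2\le w_i\,(d^{\Pi\SSS,k}(X^iO^i_s,X_s))^2$ gives what is needed. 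Chaining these two inequalities yields the two-sided monotonicity $\cL^{\rm mean}(X_{s+1},\{O^i_{s+1}\})\le \cL^{\rm mean}(X_s,\{O^i_s\})$.

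For the convergence claim I would observe that $\cL^{\rm mean}$ is a sum of squared distances and is therefore bounded below by $0$. The monotone convergence theorem for real sequences then guarantees that the nonincreasing sequence $\{\cL^{\rm mean}(X_s,\{O^i_s\})\}_{s\ge 0}$ has a limit in $[0,+\infty)$, which is exactly what is asserted.

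The routine part is really just unpacking the argmin definitions; the only place I would pause to think is whether the subproblems \eqref{equ:O} and \eqref{equ:X} should be interpreted as exact minimizers (as their $\argmin$ notation suggests) or merely as outputs of inner Riemannian-gradient-type procedures such as Algorithm \ref{alg:1}. Under the $\argmin$ reading, the proof is essentially tautological as sketched above; if instead the inner solvers are only guaranteed to monotonically decrease their respective objectives (e.g., via the Armijo backtracking used in Algorithm \ref{alg:1}), the same chain of inequalities still holds because each inner loop starts from the current iterate. I would briefly note this robustness, since it is the only subtle point and it is what makes the statement genuinely useful in practice even when the subproblems are solved inexactly. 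The sharper question of whether $\{X_s,\{O^i_s\}\}$ itself converges to a critical point is explicitly deferred in the text, so the proof need not address it.
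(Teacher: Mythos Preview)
Your proof is correct and follows essentially the same approach as the paper: chain the two monotonicity inequalities coming from the $\argmin$ definitions in \eqref{equ:O} and \eqref{equ:X}, then invoke nonnegativity of $\cL^{\rm mean}$ to conclude convergence of the nonincreasing sequence. Your additional remark on robustness to inexact inner solves is extra but harmless.
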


\begin{proof}
    By definition, we know 
    \$
    \cL^{\rm mean}(X_s,\{O_{s+1}^i\})&\leq \cL^{\rm mean}(X_s,\{O_{s}^i\}), \\
    \cL^{\rm mean}(X_{s+1},\{O_{s+1}^i\})&\leq \cL^{\rm mean}(X_s,\{O_{s+1}^i\}).
    \$
    Therefore, $\cL^{\rm mean}(X_{s+1},\{O_{s+1}^i\})\leq\cL^{\rm mean}(X_s,\{O_{s}^i\})$ for all $s$. Since the loss function is non-negative, it converges as $s$ tends to $\infty$.
\end{proof}

\subsubsection{Riemannian gradient descent for problem \eqref{equ:X}}\label{sec:4.3.1}

\begin{algorithm}[t]
\caption{Riemannian gradient descent for solving problem \eqref{equ:X_j}}\label{alg-3}
\begin{algorithmic}
\REQUIRE $\{X^i\}_{i=1}^n\subseteq\Pi^m\SSS^{k-1}$, $w_i>0$, $\{O^i_{s+1}\}_{i=1}^n\subseteq\cO(k)$, and $T$. 
\ENSURE the minimizer of problem \eqref{equ:X_j}.
\STATE Initialize $X_{j\cdot,0}\in\SSS^{k-1}$.
\WHILE{$t<T$}
\STATE Compute the Riemannian gradient, $\xi_t=\grad^{\SSS}\ell^{\SSS,sj}(X_{j\cdot,t})$, by \eqref{equ:grad-S}.
\STATE Compute a step size $\alpha_t$ via the Armijo rule.
\STATE Update $X_{j\cdot,t+1}$ by \eqref{equ:retraction-S}.  
\ENDWHILE
\RETURN $X_{j\cdot,T}$.
\end{algorithmic}
\end{algorithm}

Now we elaborate how to solve problem \eqref{equ:X} using Riemannian gradient descent. First, we observe that problem \eqref{equ:X} is a separable problem. Substituting the definition of $d^{\Pi\SSS,k}$ into \eqref{equ:X}, we obtain
\$
X_{s+1}=\argmin_{X\in\Pi^m\SSS^{k-1}}\sum_{j=1}^m\sum_{i=1}^nw_i\cdot \arccos^2(\inner{(X^iO^i_{s+1})_{j\cdot}}{X_{j\cdot}}).
\$
Observe that the constraints on $\{X_{j\cdot}\}_{j=1}^m\subseteq\SSS^{k-1}$ are independent. Thus, addressing problem \eqref{equ:X} equates to independently solving the following problem for each $j\leq m$:
\#\label{equ:X_j}
(X_{s+1})_{j\cdot}=\argmin_{X_{j\cdot}\in\SSS^{k-1}}\ell^{\SSS,sj}(X_{j\cdot}),
\#
where 
\$
\ell^{\SSS,sj}(X_{j\cdot})\coloneqq  \sum_{i=1}^nw_i\cdot \arccos^2(\inner{(X^iO^i_{s+1})_{j\cdot}}{X_{j\cdot}}).
\$
This is an optimization problem on $\SSS^{k-1}$, and we employ Riemannian gradient descent to address this problem. For each $j$, we initialize $X_{j\cdot,0}\in\SSS^{k-1}$. At the $t$-th step, we compute the Riemannian gradient of $\ell^{\SSS,sj}(\cdot)$ as follows:
\#\label{equ:grad-S}
\grad^{\SSS}\ell^{\SSS,sj}(X_{j\cdot,t})=(I_k-X_{j\cdot,t} X_{j\cdot,t}^\top)\cdot \grad^{\rm E}\ell^{\SSS,sj}(X_{j\cdot,t})
\#
where $\grad^{\rm E}\ell^{\SSS,sj}(X_{j\cdot,t})$ is the gradient of $\ell^{\SSS,sj}(\cdot)$ at $X_{j\cdot,t}$ in the Euclidean space $\RR^{k}$:
\$
\grad^{\rm E}\ell^{\SSS,sj}(X_{j\cdot,t})=\sum_{i=1}^mw_i\cdot \frac{-\arccos(\inner{(X^iO^i_{s+1})_{j\cdot}}{X_{j\cdot,t}})}{\sqrt{1-(\inner{(X^iO^i_{s+1})_{j\cdot}}{X_{j\cdot,t}})^2}}\cdot (X^iO^i_{s+1})_{j\cdot}.
\$
Here $A_{j\cdot}$ for a matrix $A\in\RR^{m\times k}$ is treated as a column vector. Then we select a step size $\alpha_t$ by the Armijo rule. Next, we update $X_{j\cdot,t+1}$ 
\#\label{equ:retraction-S}
X_{j\cdot,t+1}=R^{\SSS}_{X_{j\cdot,t}}(\alpha_t\xi_t)\coloneqq\frac{X_{j\cdot,t}+\alpha_s\xi_t}{\norm{X_{j\cdot,t}+\alpha_t\xi_t}},
\#
where $\xi_t=\grad^{\SSS}\ell^{\SSS,sj}(X_{j\cdot,t})$. Repeat the iterations for $T$ times and  return $X_{j\cdot,T}$. This algorithm is summarized in Algorithm \ref{alg-3}, and its convergence property is provided in Proposition \ref{prop:alg-3}.

\begin{proposition}\label{prop:alg-3}
    Let $\{X_{j\cdot,t}\}$ be an infinite sequence generated by Algorithm \ref{alg-3}. Then any accumulating point of $\{X_{j\cdot,t}\}$ is a critical point of the loss function $\ell^{\SSS,sj}(\cdot)$. 
\end{proposition}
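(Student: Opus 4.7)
The plan is to reduce Proposition \ref{prop:alg-3} to a direct application of the standard convergence theorem for Riemannian gradient descent with Armijo step size on a general Riemannian manifold, namely Theorem 4.3.1 of \cite{absil2008optimization}. This mirrors exactly what was done for Proposition \ref{prop:alg:1}, the only differences being that the underlying manifold is $\SSS^{k-1}$ rather than $\cO(k)$ and the loss is $\ell^{\SSS,sj}$ rather than $\ell_{X,Y}$.

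First I would verify the ingredients required by that theorem. The search space $\SSS^{k-1}$ is a smooth embedded Riemannian submanifold of $\RR^k$; the map $R^{\SSS}$ defined in \eqref{equ:retraction-S} is the standard normalization retraction, which is well-known to be a valid retraction on the sphere; the search direction $\xi_t = -\grad^{\SSS}\ell^{\SSS,sj}(X_{j\cdot,t})$ is, up to sign, the Riemannian gradient itself, hence trivially gradient-related; and the step size $\alpha_t$ is chosen by the Armijo rule. Once these are verified, Theorem 4.3.1 of \cite{absil2008optimization} immediately yields that every accumulation point of $\{X_{j\cdot,t}\}$ is a critical point of $\ell^{\SSS,sj}$.

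The only subtlety I anticipate is smoothness of the loss. The function $\arccos^2(\cdot)$ is continuously differentiable on $(-1,1)$ but its derivative $-2\arccos(u)/\sqrt{1-u^2}$ diverges at the antipodal values $u = \pm 1$. Thus one must argue that $\ell^{\SSS,sj}$ is $C^1$ on the relevant subset of $\SSS^{k-1}$ where the algorithm operates. The cleanest route is to note that the subset where all inner products $\inner{(X^i O^i_{s+1})_{j\cdot}}{X_{j\cdot}}$ lie in $(-1,1)$ is open in $\SSS^{k-1}$, and on this open set the loss is smooth; any accumulation point in the interior of this domain is then a genuine critical point in the classical sense. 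In practice (and as is standard for such sphere-based optimization arguments), one may restrict attention to this open regime without loss of generality, and state the conclusion accordingly.

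Putting the above together, the formal proof can be phrased in essentially one line: \emph{This is a corollary of Theorem 4.3.1 in \cite{absil2008optimization}, applied to the Riemannian manifold $\SSS^{k-1}$, the retraction $R^{\SSS}$, the Armijo step-size rule, and the (negative) Riemannian gradient search direction.} The hardest part is merely being careful about the domain of smoothness of $\arccos^2$; the rest is a direct invocation of an off-the-shelf convergence result.
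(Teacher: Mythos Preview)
Your proposal is correct and takes essentially the same approach as the paper: the paper's proof is the single sentence ``This is a corollary of Theorem 4.3.1 in \cite{absil2008optimization}.'' You are in fact more careful than the paper, since you explicitly verify the hypotheses (retraction, Armijo rule, gradient-related direction) and flag the $\arccos^2$ smoothness issue at $u=\pm 1$, which the paper does not address.
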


\begin{proof}
    This is a corollary of Theorem 4.3.1 in \cite{absil2008optimization}.
\end{proof}

%\subsection{General optimization problems}\label{sec:4.4}

%\newpage 

\section{Riemannian geometry of $\Corr(m,k)\equiv \Pi_k^m\SSS^{k-1}/\cO(k)$}\label{sec:5}

In practical scenarios, working with correlation matrices with a fixed rank may be intriguing. For example, in the brain connectivity study in Section \ref{sec:app}, the correlation matrices from one site are of the same rank. Thus, it is natural to make the fixed rank assumption when building the model. In this section, we  delve into the detailed Riemannian geometric properties of the Riemannian quotient manifold $\Corr(m,k)$ of correlation matrices of fixed rank $k$. In Section~\ref{sec:3.3}, we have demonstrated that $\Corr(m,k)\equiv \Pi_k^m\SSS^{k-1}/\cO(k)$, where $\Phi_{kk}$ in \eqref{equ:Phi_kl} gives an isomorphism between them. In this section, we will examine the following quantities regarding this quotient manifold: horizontal and vertical spaces, Riemannian metric, injectivity radius, exponential and  logarithmic map, curvatures, gradient, connection, and Hessian. Theories are formulated for $\Pi^m_k\SSS^{k-1}/\cO(k)$, and the mapping  $\Phi_{kk}$ can be utilized to extrapolate corresponding theories onto $\Corr(m,k)$. Throughout this section, $k,m$ are held constant and $k\geq 2$.

To begin with, we briefly review the manifold $\Pi^m_k\SSS^{k-1}$ endowed with the spherical product metric $g^{\Pi\SSS,k}$. Given $X\in\Pi^m_k\SSS^{k-1}$, the tangent space to $\Pi^m_k\SSS^{k-1}$ at $X$ is given by
\$
\cT_{X}\Pi^m_k\SSS^{k-1}=\{V\in\RR^{m\times k}\mid \diag(V X^\top)=\zero\}.
\$
The spherical product metric $g^{\Pi\SSS,k}$ is defined by
\#\label{equ:metric-S}
g^{\Pi\SSS,k}_{X}(V,W)=\tr(V^\top W),\quad \forall V,W\in \cT_{X}\Pi^m_k\SSS^{k-1}.
\#
The induced geodesic distance $d^{\Pi\SSS,k}$ is given by \eqref{equ:spherical-dist}. In Section \ref{sec:3.3}, we have shown that the group action \eqref{equ:action-2} of $\cO(k)$ on the Riemannian manifold $(\Pi^m_k\SSS^{k-1},d^{\Pi\SSS,k})$ is smooth, proper, free, and isometric. Thus, the quotient space $\Pi^m_k\SSS^{k-1}/\cO(k)$ admits a Riemannian manifold structure such that the quotient map $\pi_{k,k}$, defined in \eqref{equ:pi_kl}, is a Riemannian submersion.

\subsection{Horizontal and vertical spaces}

The tangent space $\cT_{X}\Pi^m_k\SSS^{k-1}$ can be represented as the direct sum of the vertical and horizontal spaces at $X$, defined by
\$
\cV_X&=\{X\Omega\mid\Omega\in\cS_{\skew}(k)\},\\
\cH_X=\cV_X^\perp&=\{V\in\RR^{m\times k}\mid V^\top X=X^\top V,\diag(VX^\top)=\zero \}.
\$
The vertical space $\cV_X$ is the tangent space to the orbit $[X]=\{XO\mid O\in\cO(k)\}$ at $X$, and the horizontal space is its orthogonal complement. Given any $W\in\cT_X\Pi^m_k\SSS^{k-1}$, the projection of $W$ on the horizontal and vertical spaces, denoted by $P^v_{X}(W)$ and $P^h_X(W)$, are derived in \cite{massart2020quotient} as follows:
\#
P^v_X(W)&=X{\bf T}^{-1}_{X^\top X}(X^\top W-W^\top X),\notag\\
P^h_X(W)&=W-P^v_X(W),\label{equ:h-proj}
\#
where ${\bf T}_{E}^{-1}(\cdot)$ is the inverse of the following map
\$
{\bf T}_{E}:\RR^{k\times k}\to \RR^{k\times k}, \quad A\to EA+AE.
\$
Lemma A.10 in \cite{massart2020quotient} states that when $E$ is SPD, ${\bf T}_{E}$ is invertible and ${\bf T}_{E}(\cS_{\skew}(k))=\cS_{\skew}(k)$.
The solution $A$ of the Sylvester equation $EA+AE=W$ can be efficiently computed by diagonalization of the SPD matrix $E$ (see \cite{massart2020quotient}, Section 2.2, \cite{bhatia1997and}, Section 10). 

For any tangent vector $\xi$ on $\Pi^m_k\SSS^{k-1}/\cO(k)$ at $[X]$, there exists a unique vector $\xi^{\sharp}_X\in\cH_X$ such that 
\$
d_{X}\pi_{k,k}(\xi^\sharp_X)=\xi, 
\$
where $d_{X}\pi_{k,k}(\cdot)$ is the differential of the quotient map $\pi_{k,k}$ at $X$. We call $\xi^\sharp_X$ the horizontal lift of $\xi$ at $X$ (see \cite{kobayashi1996foundations,absil2008optimization}). Many geometric quantities in a quotient manifold involve using the horizontal lifts.   

\subsection{Riemannian metric, geodesic distance, and injectivity radius}

The Riemannian manifold $(\Pi^m_k\SSS^{k-1},g^{\Pi\SSS,k})$ induces a Riemannian metric on the quotient manifold $\Pi^m_k\SSS^{k-1}/\cO(k)$ via the quotient map $\pi_{k,k}$. Specifically, the relation
\#\label{equ:metric-quotient}
g^{\qt,k}_{\pi_{k,k}(X)}(\xi,\zeta)=g^{\Pi\SSS,k}_{X}(\xi_X^\sharp,\zeta_X^\sharp),\quad\forall \xi,\zeta\in\cT_{\pi_{k,k}(X)}\Pi^m_k\SSS^{k-1}/\cO(k),
\#
defines a Riemannian metric on $\Pi^m_k\SSS^{k-1}/\cO(k)$.  $\pi_{k,k}$ is then a Riemannian submersion from $(\Pi^m_k\SSS^{k-1},g^{\Pi\SSS,k})$ to the quotient manifold $(\Pi^m_k\SSS^{k-1}/\cO(k),g^{\qt,k})$. Moreover, the Riemannian metric $g^{\qt.k}$  induces a geodesic distance that equates to the orbit distance $d^{\orb,k}$, which we prove in Proposition \ref{prop:Riemannian}. The metric space $(\Pi^m_k\SSS^{k-1}/\cO(k),d^{\orb,k})$ is incomplete and is dense in the orbit space $(\Pi^m\SSS^{k-1}/\cO(k),d^{\orb,k})$. In particular, it implies that the injectivity radius of the quotient manifold is zero.

\begin{corollary}\label{corr:inj_radius}
    The injectivity radius of the manifold $\Pi^m_k\SSS^{k-1}/\cO(k)$ is zero. 
\end{corollary}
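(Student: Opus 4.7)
The plan is to produce a sequence $\{[X_n]\} \subseteq \Pi^m_k\SSS^{k-1}/\cO(k)$ in the principal stratum whose injectivity radii tend to zero, exploiting the density of the principal stratum in the orbit space together with the incompleteness of the quotient manifold. The intuition I want to formalize is that points close to a lower-rank boundary stratum, which lies outside the quotient manifold, admit geodesics that exit the manifold in arbitrarily short time, thereby capping their cut time and hence their injectivity radius.

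For the construction, since $k \geq 2$ I can pick $X_\infty \in \Pi^m_1\SSS^{k-1}$ with all rows equal to a fixed unit vector $e_1 \in \SSS^{k-1}$, then perturb it to obtain $X_n \in \Pi^m_k\SSS^{k-1}$ with $X_n \to X_\infty$ in the spherical product manifold (e.g., rotate different rows by $O(1/n)$ in distinct orthogonal directions so the $m \times k$ matrix attains rank $k$). By continuity of the canonical projection, $[X_n] \to [X_\infty]$, so $d_n := d^{\orb,k}([X_n], [X_\infty]) \to 0$. By Section~\ref{sec:2.3.2}, there is a unit-speed minimizing geodesic $\gamma_n \colon [0, d_n] \to \Pi^m\SSS^{k-1}/\cO(k)$ from $[X_n]$ to $[X_\infty]$. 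Applying Proposition~\ref{prop:geodesic}, the rank of $\gamma_n(t)$ is some constant $r$ on $(0, d_n)$ with $r \geq \max\{k,1\} = k$; since matrices in $\Pi^m\SSS^{k-1}$ have rank at most $k$, we get $r = k$. Thus $\gamma_n|_{[0, d_n)}$ lies entirely in the principal stratum and is a unit-speed geodesic in the quotient manifold with initial velocity $v_n := \dot\gamma_n(0)$.

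Next I would bound the cut time at $[X_n]$. Because $\gamma_n(t) \to [X_\infty] \notin \Pi^m_k\SSS^{k-1}/\cO(k)$ as $t \to d_n^-$, the geodesic cannot be extended beyond time $d_n$ inside the quotient manifold, so $\sup I_{[X_n], v_n} = d_n$. Moreover, sub-arcs of a minimizing geodesic are minimizing, and Proposition~\ref{prop:Riemannian} identifies the quotient-manifold distance with $d^{\orb,k}$ on the principal stratum, so $d^{\qt,k}([X_n], \gamma_n(t)) = t$ for every $t \in [0, d_n)$. Hence $t_{\rm cut}([X_n], v_n) = d_n$, which gives $\inj([X_n]) \le d_n \to 0$ and forces the injectivity radius of the manifold to vanish. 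The main obstacle is pinning down $\sup I_{[X_n], v_n}$ inside the quotient manifold and certifying minimality along $\gamma_n|_{[0, d_n)}$; both are settled by the two cited propositions, so the remainder is a direct application of the definitions of $t_{\rm cut}$ and $\inj$.
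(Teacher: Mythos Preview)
Your argument is correct, but it takes a substantially more hands-on route than the paper. The paper's proof is a single line: since $\Pi^m_k\SSS^{k-1}/\cO(k)$ is a dense proper subset of the complete orbit space $\Pi^m\SSS^{k-1}/\cO(k)$, it is incomplete, and any incomplete Riemannian manifold has injectivity radius zero (if $\inf_x\inj(x)\geq\epsilon>0$ then every unit-speed geodesic can be prolonged by at least $\epsilon$, forcing completeness via Hopf--Rinow). Your construction essentially reproves this general fact in the concrete setting: you exhibit a specific boundary point $[X_\infty]$ of rank $1$, approximate it by rank-$k$ points $[X_n]$, and use Proposition~\ref{prop:geodesic} together with Proposition~\ref{prop:Riemannian} to show that the orbit-space minimizing geodesic from $[X_n]$ to $[X_\infty]$ is, on $[0,d_n)$, a Riemannian geodesic in the quotient manifold that cannot be continued past time $d_n$. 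The payoff of your approach is that it is explicit---one sees exactly which points have small injectivity radius and which geodesics witness this---whereas the paper's argument is much shorter but invokes a fact the reader must either know or verify separately. One small point worth making explicit in your write-up: the step ``$\gamma_n$ cannot be extended to time $d_n$ in the quotient manifold'' uses that convergence in $d^{\qt,k}$ coincides with convergence in $d^{\orb,k}$ (Proposition~\ref{prop:Riemannian}), so any putative limit in the quotient manifold would have to equal $[X_\infty]$, which lies outside it.
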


\begin{proof}
    Recall that $\Pi^m_k\SSS^{k-1}/\cO(k)$ is an incomplete Riemannian manifold because it is a dense proper set in the orbit space $\Pi^m\SSS^{k-1}/\cO(k)$. The corollary follows because any incomplete Riemannian manifold has zero injectivity radius. 
\end{proof}

\subsection{Exponential map}

We first recall the exponential map on a sphere. Suppose $x\in\SSS^{k-1}$ and $v\in\cT_x\SSS^{k-1}$. The exponential map $\Exp_x^{\SSS}(v)$ is given by 
\$%\label{equ:exp-S}
\Exp_x^{\SSS}(v)=x\cos(\norm{v})+\sin(\norm{v})\frac{v}{\norm{v}}.
\$
Then we can derive the exponential map on the spherical product manifold $\Pi^m\SSS^{k-1}$. Let $X\in\Pi^m\SSS^{k-1}$ and $V\in\cT_{X}\Pi^m\SSS^{k-1}$. Then the exponential map $\Exp^{\Pi\SSS}_X(V)$ on this manifold is given by $Y\in\Pi^m\SSS^{k-1}$ such that
\$
Y_{i\cdot}=\Exp^{\SSS}_{X_{i\cdot}}(V_{i\cdot}),\quad\forall i\leq m.
\$
When we restrict ourselves to the open submanifold $\Pi^m_k\SSS^{k-1}$, we get the exactly same expression of the exponential map, as long as it is well-defined. Let $X\in\Pi^m_k\SSS^{k-1}$ and $V\in\cT_{X}\Pi^m_k\SSS^{k-1}$. The definition interval $I_{X,V}$ of the exponential map is not necessarily $\RR$, as $\Pi^m_k\SSS^{k-1}$ is not complete. In particular, the definition interval has the following characterization:
\$
I_{X,V}=(t^{\min}_{X,V},t^{\max}_{X,V}),
\$
where
\$
t^{\min}_{X,V}&=\min\{t<0\mid \rank(\Exp_X^{\Pi\SSS}(s V))=k,\forall s\in(t,0)\},\\
t^{\max}_{X,V}&=\max\{t>0\mid \rank(\Exp_X^{\Pi\SSS}(s V))=k,\forall s\in(0,t)\}.
\$
Now we examine the quotient manifold $\Pi^m_k\SSS^{k-1}/\cO(k)$. Let $[X]\in\Pi^m_k\SSS^{k-1}/\cO(k)$ and $\xi\in\cT_{X}\Pi^m_k\SSS^{k-1}/\cO(k)$. Proposition \ref{prop:quotient-geodesic} implies that the definition interval $I_{[X],\xi}$ of the exponential map on the quotient manifold contains $I_{X,\xi^\sharp_X}$ as a sub-interval, where $\xi^\sharp_X$ is the horizontal lift of $\xi$ at $X$. Also, the exponential map on the quotient manifold satisfies
\$
\Exp^{\qt}_{[X]}(t\xi)=[Y_t],\quad \textnormal{where }Y_t=\Exp^{\Pi\SSS}_{X}(t\xi_X^\sharp),
\$
for all $t\in I_{X,\xi_X^\sharp}$. %Finally, we show that the injectivity radius of $\Pi^m_k\SSS^{k-1}/\cO(k)$ is  zero.

\subsection{Logarithmic map}

In this section, we study the logarithmic map. To begin with, we establish the following result. It shows that minimizing geodesics in the orbit space and in the Riemannian quotient manifold are highly relevant.  

\begin{proposition}\label{prop:5.2}
    Let $[X],[Y]\in\Pi^m_k\SSS^{k-1}/\cO(k)$. Then any minimizing geodesic connecting $[X],[Y]$ in the orbit space $(\Pi^m\SSS^{k-1}/\cO(k),d^{\orb,k})$ is a minimizing geodesic connecting $[X],[Y]$ in the Riemannian quotient manifold $(\Pi^m_k\SSS^{k-1}/\cO(k),g^{\qt,k})$, and vice versa.  
\end{proposition}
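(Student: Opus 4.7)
The plan is to show that every minimizing geodesic from $[X]$ to $[Y]$ in the orbit space actually lies entirely in the principal stratum $\Pi^m_k\SSS^{k-1}/\cO(k)$, and then to use the compatibility between the orbit space distance and the Riemannian quotient distance (Proposition \ref{prop:Riemannian}) to transfer the minimizing property between the two structures.

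\textbf{Step 1: Localize the geodesic to the principal stratum.} Suppose $\gamma:[0,1]\to\Pi^m\SSS^{k-1}/\cO(k)$ is a minimizing geodesic with $\gamma(0)=[X]$ and $\gamma(1)=[Y]$, where $X,Y\in\Pi^m_k\SSS^{k-1}$. Write $\gamma(t)=[X_t]$. By Proposition \ref{prop:geodesic}, $\rank(X_t)$ is constant on $(0,1)$ and at least $\max\{\rank(X),\rank(Y)\}=k$. Since $X_t\in\Pi^m\SSS^{k-1}\subseteq\RR^{m\times k}$ forces $\rank(X_t)\leq k$, we get $\rank(X_t)=k$ for all $t\in(0,1)$, and the endpoints are already in $\Pi^m_k\SSS^{k-1}$ by assumption. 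Hence the image of $\gamma$ is entirely contained in the principal stratum $\Pi^m_k\SSS^{k-1}/\cO(k)$.

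\textbf{Step 2: Identify the two length functionals on the principal stratum.} By Proposition \ref{prop:Riemannian}, the geodesic distance $d^{\qt,k}$ induced by $g^{\qt,k}$ coincides with $d^{\orb,k}$ when both are restricted to $\Pi^m_k\SSS^{k-1}/\cO(k)$. Since the length of a curve in a metric space is defined as the supremum of $\sum d(\gamma(t_i),\gamma(t_{i+1}))$ over all partitions, and since every $\gamma(t_i)$ lies in the principal stratum by Step 1, the length computed with $d^{\orb,k}$ and the length computed with $d^{\qt,k}$ agree for any curve contained in the principal stratum. Constant-speed parameterization is likewise preserved.

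\textbf{Step 3: Transfer the minimizing property in both directions.} For the forward direction, let $\gamma$ be minimizing in the orbit space. By Step 2, $L^{\qt}(\gamma|_{[a,b]})=L^{\orb}(\gamma|_{[a,b]})=d^{\orb,k}(\gamma(a),\gamma(b))=d^{\qt,k}(\gamma(a),\gamma(b))$ for every $0\leq a\leq b\leq 1$, so $\gamma$ is minimizing in the Riemannian quotient manifold. For the converse, suppose $\gamma$ is a minimizing geodesic in the Riemannian quotient manifold from $[X]$ to $[Y]$; then its image already lies in the principal stratum by definition, and the same equality chain, now read from right to left, shows that $\gamma$ is minimizing in the orbit space.

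\textbf{Main obstacle.} The technical heart of the argument is Step 1, which relies on the convexity property in orbit spaces (Proposition \ref{prop:3}) together with the explicit ordering on conjugacy classes of stabilizers derived in Section \ref{sec:3.2}; once the geodesic is confined to the principal stratum, Steps 2 and 3 are essentially bookkeeping around the agreement of the two distance functions guaranteed by Proposition \ref{prop:Riemannian}.
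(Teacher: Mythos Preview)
Your proposal is correct and follows exactly the same line as the paper: first use Proposition~\ref{prop:geodesic} to confine the minimizing geodesic to the principal stratum, then invoke Proposition~\ref{prop:Riemannian} to identify the two distances (hence lengths) on that stratum and transfer the minimizing property in both directions. Your write-up is somewhat more explicit in Step~2 and Step~3 than the paper's, but the structure and the key ingredients are identical.
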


\begin{proof}
    Proposition \ref{prop:geodesic} implies that any minimizing geodesic connecting $[X],[Y]$ has constant rank $k$ on the interior of the segment. Therefore, the whole minimizing geodesic is located in the space $\Pi^m_k\SSS^{k-1}/\cO(k)$. Proposition \ref{prop:Riemannian} demonstrates that the Riemannian quotient manifold induced the orbit space distance. Therefore, any minimizing geodesic connecting $[X],[Y]$ in the orbit space is also a minimizing geodesic connecting $[X],[Y]$ in the Riemannian quotient manifold. The reverse of the statement is also true due to the compatibility of the metric.  
\end{proof}

In particular, for any $[X],[Y]\in\Pi^m_k\SSS^{k-1}/\cO(k)$, there exists a minimizing geodesic  connecting them\footnote{The uniqueness is not guaranteed.}. In the following, we will give an approach to compute a logarithm of $[Y]$ from $[X]$. The idea is to utilize Proposition \ref{prop:1}. First, we address the following alignment problem:
\$
\cO^*=\argmin_{O\in\cO(k)}d^{\Pi \SSS,k}(X,YO).
\$
This problem is addressed in the computation of distance in Section \ref{sec:4.1}. Specifically,  Algorithm \ref{alg:1} can be used to efficiently compute $O^*$. Then the pair of points $X,YO^*$ are registered. Let $\varphi(t)$ be a constant-speed minimizing geodesic in $\Pi^m\SSS^{k-1}$ such that $\varphi(0)=X$ and $\varphi(1)=YO^*$. By Proposition \ref{prop:1} and Proposition \ref{prop:5.2}, we know $\gamma(t)=[\varphi(t)]$ is a constant-speed minimizing geodesic in the orbit space $(\Pi^m\SSS^{k-1},d^{\orb,k})$ and the Riemannian quotient manifold $(\Pi^m_k\SSS^{k-1},g^{\qt,k})$ such that  $\gamma(0)=[X]$, $\gamma(1)=[Y]$. Consequently, a logarithm of $[Y]$ from $[X]$ is given by $\gamma'(0)$. Finally, it is worth noting that we do not address the issue of the uniqueness of the logarithm. This is a more challenging problem, which we leave to future study.

\subsection{Curvature}

In this section, we aim to demonstrate that the quotient manifold $\Pi^m_k\SSS^{k-1}/\cO(k)$ has non-negative sectional curvatures. To this end, we first study the sectional curvatures of the spherical product manifold $\Pi^m\SSS^{k-1}$. In specific, there are two cases:
\vspace{5pt}
    {
    \renewcommand{\labelitemi}{\scalebox{0.4}{$\bullet$}}
    
\begin{itemize}
    \item When $k=2$, $\Pi^m\SSS^{k-1}$ has zero sectional curvatures.

    \vspace{5pt}
    \item When $k>2$, by  Chapter 3, Section 1 and Section 2.2 in \cite{petersen2006riemannian}, we find that the sectional curvatures of $\Pi^m\SSS^{k-1}$ are restricted to the interval $[0,1]$. 
\end{itemize}
}
\vspace{5pt}
\noindent Therefore, $\Pi^m\SSS^{k-1}$, and thus $\Pi^m_k\SSS^{k-1}$ has non-negative sectional curvatures. By the O'Neill's submersion result \cite{o1966fundamental}, we know the curvatures of the quotient manifold are larger than those of the original manifold. In particular, the sectional curvatures of the quotient manifold $\Pi^m_k\SSS^{k-1}/\cO(k)$ are larger than the minimum sectional curvatures of $\Pi^m_k\SSS^{k-1}$. Hence, the quotient manifold also has non-negative sectional curvatures.

\subsection{Gradient, connection, and Hessian}

This section delves into the Riemannian gradient, connection, and Hessian on the quotient manifold $\Pi^m_k\SSS^{k-1}/\cO(k)$. Specifically, let $f:\Pi^m_k\SSS^{k-1}/\cO(k)\to\RR$, and $f^{\sharp}:\Pi^m_k\SSS^{k-1}\to\RR$ be functions such that $f^\sharp=f\circ \pi_{k,k}$, where $\pi_{k,k}$ is the quotient map defined in \eqref{equ:pi_kl}. By Section 3.6.2 in \cite{absil2008optimization}, the Riemannian gradient of $f$ satisfies 
\$
(\grad f([X]))^{\sharp}_X=\grad f^\sharp(X),\quad\forall X\in\Pi^m_k\SSS^{k-1},
\$
where $\grad f^\sharp$ is the Riemannian gradient of $f^\sharp$ on $\Pi^m_k\SSS^{k-1}$. By Section 5.3.4 in \cite{absil2008optimization}, the Riemannian connection $\nabla$ on the quotient manifold $\Pi^m_k\SSS^{k-1}/\cO(k)$ satisfies 
\$
(\nabla_\eta\xi)^\sharp_X=P^h_X((\nabla^\sharp_{\eta^\sharp}\xi^\sharp)_X),\quad \forall X\in\Pi^m_k\SSS^{k-1},
\$
for all vector fields $\eta,\xi$ on the quotient manifold, where $\eta^\sharp,\xi^\sharp$ are the horizontal lifts of $\eta,\xi$, $P^h_X$ is the horizontal projection in \eqref{equ:h-proj}, and $\nabla^\sharp$ is the Riemannian connection on $\Pi^m_k\SSS^{k-1}$. By Section 5 in \cite{absil2008optimization} and Proposition A.14 in \cite{massart2020quotient}, the Riemannian Hessian of $f$ on the quotient manifold $\Pi^m_k\SSS^{k-1}/\cO(k)$ satisfies
\$
(\Hess f([X])[\xi_{[X]}])^\sharp_X=P^h_X((\nabla^\sharp_{\xi^\sharp}\grad f^\sharp)_X),\quad \forall X\in\Pi^m_k\SSS^{k-1},
\$
where $\xi$ is any vector field on the quotient manifold.

%\subsection{Curvature}

\section{Application}\label{sec:app}

This section presents an application of our geometric approach to analyze brain connectivity patterns in autism spectrum disorder (ASD). ASD is a neurodevelopmental disorder characterized by abnormal social skills, communication skills, interests and behavior patterns~\cite{liu2020improved,hirota2023autism,lord2018autism}. A previous survey \cite{baio2018prevalence} observed the prevalence of ASD among children and showed that about 1 in 59 children worldwide has ASD. Thus, ASD has significantly affected the society and  caused great economic burden for both patients' families and society \cite{rogge2019economic}. Therefore, it is crucial to develop effective diagnostic and treatment methods.

Among available technologies, resting-state functional magnetic resonance imaging (rs-fMRI) demonstrates potential for defining functional neurophenotypes \cite{kelly2008competition,van2012future}. Many studies have utilized brain rs-fMRI data to develop classifiers to distinguish patients with ASD from typically developing controls (TDC) \cite{abraham2017deriving,liu2020improved}. In their methods, brain rs-FMRI data is typically transformed into correlation matrices to characterize brain connectivity patterns. This motivates us to use the proposed quotient geometry of correlation matrices for such brain connectivity study. 

In our study, we collect data from the Autism Brain Imaging Data Exchange I (ABIDE I) \cite{di2014autism}. The ABIDE I contains 539 subjects with ASD and 573 TDCs from 17 international sites. In our study, we only use the data from a single site, Caltech, to reduce site-wise heterogeneity. Also, we use the preprocessed data provided by the the Preprocessed Connectome Project initiative \cite{craddock2013neuro}. Specifically, we choose the data processed with the Configurable Pipeline for the Analysis of Connectomes (CPAC), the procedures of which include slice time correction, motion correct, skull-strip, nuisance signal regression. Our study focuses on the time series data for regions of interest for the Harvard-Oxford Atlas. For each subject, we compute the covariance matrix of the time series and we delete the subject which has more than 8 zero diagonal entries in its covariance matrix. This leaves us 35 subjects with 18 ASDs and 17 TDCs. The average age of these subjects is 27.45 (with a std of 10.23), and the Male/Female ratio is 27/8. Next, according to these subjects, we select all regions of interest where the associated time series exhibit non-zero variance for all subjects. This process results in 104 regions of interest. 

In our study, we compute the correlation matrices for all subjects and these correlation matrices are of fixed rank 62. The consistent rank across all correlation matrices is a noteworthy observation, further justifying our use of the quotient geometry proposed in this paper. Next, we divide correlation matrices into two groups according to the ASD/TDC labels. For each group, we compute the \Frechet mean of  correlation matrices using the orbit space structure with rank parameter equal to 62. These two mean correlation matrices are visualized in the first row of Figure \ref{fig:2}. Then we compute the distance between these two mean correlation matrices, which is 3.69. Furthermore, we visualize the difference between these two mean correlation matrices in the second row of Figure \ref{fig:2}. After thresholding the difference matrix at 0.2, we find that the main differences between two mean correlation matrices are mostly concentrated in a subset of regions of interest. Thus, a further examination of these regions of interest from a biological perspective may be intriguing.  
%caused great economic burden . 
%autism study using data from the Autism Brain Imaging Data Exchange (ABIDE) \cite{di2014autism}. Autism spectral disorder (ASD) is 

\begin{figure}
    \centering
    \includegraphics[width=0.9\linewidth]{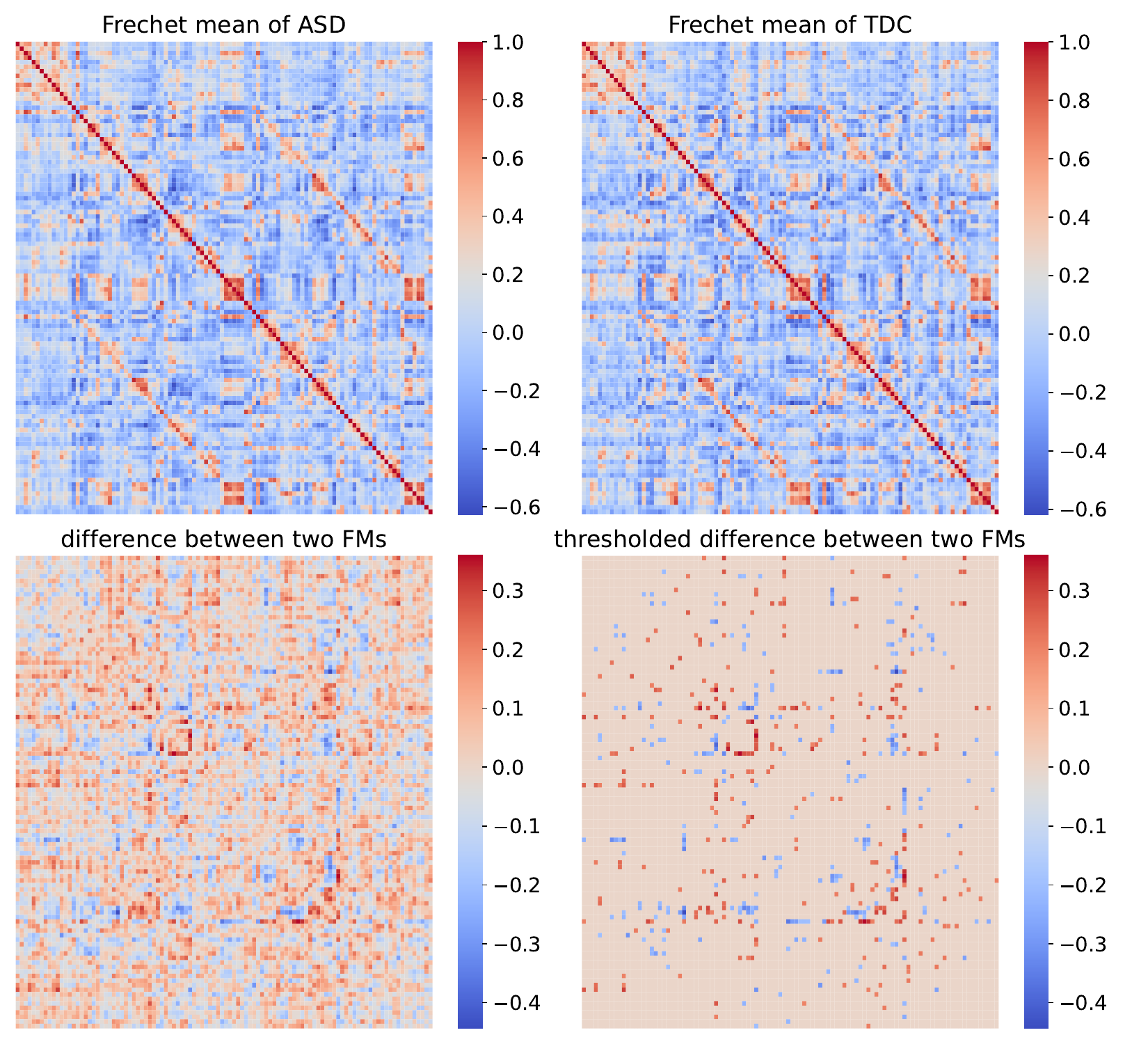}
    \caption{Heat maps of mean correlation matrices. In the first row, the left panel shows the mean correlation matrix for the ASD group, and the right panel shows the mean correlation matrix for the TDC group. In the second row, the left panel shows the difference of two mean correlation matrices (ASD - TDC), and the right panel shows the difference  after thresholding at 0.2, i.e., setting those values in [-0.2,0.2] to 0. }
    \label{fig:2}
\end{figure}

%\subsection{Lie derivative}

\section{Conclusion}\label{sec:conclusion}

This paper provides a comprehensive theory on the quotient geometry of correlation matrices. We demonstrate that the set $\Corr(m,[k])$ of bounded-rank correlation matrices admits a natural orbit space structure. Its stratification is determined by the rank of matrices. Moreover, its principal stratum $\Corr(m,k)$ has a compatible Riemannian quotient manifold structure. Both distance and \Frechet mean in the orbit space $\Corr(m,[k])\equiv\Pi^m\SSS^{k-1}/\cO(k)$ do not admit closed-form expression. Hence, our paper provides Riemannian optimization algorithms for efficient numerical computations of these two quantities. Moreover, we present theories on the minimizing geodesics in the orbit space, which are shown to have constant rank on the interior of the segment. Also, we delve into geometric quantities in the Riemannian quotient manifold $\Pi^m_k\SSS^{k-1}/\cO(k)$. We study concepts including horizontal and vertical spaces, Riemannian metric, injectivity radius, exponential and logarithmic map, curvatures, gradient and Hessian. Finally, we apply our geometric approach to an autism study and illustrate the utility of our method. To conclude, let us present several directions worthy of future study. 
\vspace{5pt}
    {
    \renewcommand{\labelitemi}{\scalebox{0.4}{$\bullet$}}
    \begin{itemize}
        \item First, it is intriguing to apply the proposed quotient geometry in various fields involving correlation matrices. {For example, one may conduct a large-scale brain connectivity study using this geometry. Besides conducting tasks such as regression and classification, one can also take nonstationarity into account and investigate dynamic functional connectivity \cite{liu2020improved}. It is interesting to use advanced statistical tools designed for stratified spaces, such as stratified PCA \cite{szwagier2023stratified} and central limit theorem for \Frechet mean on stratified spaces \cite{mattingly2023central}. }  
        
\vspace{5pt}

        \item Second, for general \Frechet regression~\cite{petersen2019frechet}, one may need to find the weighted \Frechet mean with some negative weights $w_i$. Section \ref{sec:4.3} only addresses the case where all $w_i>0$. For general $w_i$, it is interesting to develop a suitable algorithm.
     
\vspace{5pt}
        \item Third, it is interesting to study general optimization over the set of correlation matrices of bounded rank. It is of interest to examine whether the algorithm in \cite{olikier2023first} can be extended to this case.

\vspace{5pt}
        \item {Fourth, it is worth determining an expression for the horizontal life, the number of logarithms, and an expression for the curvature. }

\vspace{5pt}
        \item {Finally, since the product of spheres is symmetric, one can define Riemannian radial distributions on such space  and develop estimation theory \cite{chen2024riemannian}. Thus, it is interesting to explore whether one can define radial distributions on orbit spaces and develop similar statistical theory. } 

    \end{itemize}
}

\appendix 

\section{Geometry} 

In this section, we review more geometric tools.  

\subsection{Diffeomorphism}

A bijection $\Phi:\cM\to\cN$ between smooth manifolds $\cM$ and $\cN$ is a diffeomorphism when both $\Phi$ and $\Phi^{-1}$ are differentiable. Proposition \ref{prop:differentiable} provides a useful tool for establishing the differentiable property of a map.

\begin{proposition}[\cite{brickell1970differentiable}, Proposition 6.1.2]\label{prop:differentiable} Let $f$ be a submersion of a smooth manifold $\cM$ to a smooth manifold $\cN$. Let $g:\cN\to\cN'$ be a map from $\cN$ to a smooth manifold $\cN'$. If $g\circ f$ is differentiable, then $g$ is also differentiable.
\end{proposition}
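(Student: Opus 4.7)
The plan is to prove differentiability of $g$ at an arbitrary point $q\in\cN$ (in the image of $f$, which is where the hypothesis on $g\circ f$ provides information) by producing a smooth local section of $f$ through $q$ and then writing $g$ locally as a composition of two maps known to be differentiable.

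First I would invoke the local canonical form for submersions. Since $f:\cM\to\cN$ is a submersion, for any $x\in\cM$ with $f(x)=q$, the rank theorem (or the implicit function theorem applied in adapted charts) yields coordinate neighborhoods in which $f$ looks like the projection $(u,v)\mapsto u$ from an open subset of $\RR^{\dim\cN}\times\RR^{\dim\cM-\dim\cN}$ onto an open subset of $\RR^{\dim\cN}$. From this normal form one reads off a smooth local section: there exist an open neighborhood $U\subseteq\cN$ of $q$ and a smooth map $\sigma:U\to\cM$ with $f\circ\sigma=\mathrm{id}_U$ and $\sigma(q)=x$.

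Second, on $U$ I would write
\[
g|_U \;=\; (g\circ f)\circ \sigma,
\]
which is immediate from $f\circ\sigma=\mathrm{id}_U$. By hypothesis $g\circ f$ is differentiable on $\cM$, and $\sigma$ is smooth on $U$ by construction, so the composition $(g\circ f)\circ\sigma$ is differentiable on $U$. Hence $g$ is differentiable at $q$, and since $q$ was arbitrary in the image of $f$, this gives differentiability of $g$ everywhere it is meaningfully constrained by the hypothesis.

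The only substantive step is the construction of the smooth local section $\sigma$, which is the main obstacle and is the content of the submersion normal form; everything afterwards is a one-line composition argument. No delicate compatibility or global gluing is required, because differentiability is a local property and the local expression $g|_U=(g\circ f)\circ\sigma$ works at each point independently.
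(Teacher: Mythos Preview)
The paper does not supply its own proof of this proposition; it simply cites \cite{brickell1970differentiable}, Proposition 6.1.2. Your argument is the standard one and is correct: use the submersion local normal form to produce a smooth local section $\sigma$ with $f\circ\sigma=\mathrm{id}_U$, and then observe $g|_U=(g\circ f)\circ\sigma$ is a composition of differentiable maps. One small remark: as you note, this only constrains $g$ on the image of $f$; in the paper's applications (Proposition~\ref{prop:smooth}) the submersions $\pi_{k,\ell}$ and $\phi_{k,\ell}$ are surjective, so this is not an issue.
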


\subsection{Submanifolds}

Let $h:\cM\to\cN$ be any map and $c$ be any point of $\cN$. We call the set $h^{-1}(c)$ a level set of $h$. Let $h$ be a smooth map between smooth manifolds $\cM$ and $\cN$. A point $x\in\cM$ is said to be a regular point of $h$ if the differential $d_{x}h:\cT_{x}\cM\to \cT_{h(x)}\cN$ is surjective. A level set $h^{-1}(c)$ is called a regular level set if every element in $h^{-1}(c)$ is a regular point of $h$. Proposition \ref{prop:levelset} shows that every regular level set is an embedded submanifold.

\begin{proposition}[\cite{lee2012introduction}, Corollary 5.14]\label{prop:levelset}
    Every regular level set of a smooth map between smooth manifolds is a properly embedded submanifold.
\end{proposition}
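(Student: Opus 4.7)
The plan is to apply the local submersion (rank) theorem to get a normal form for $h$ near each level-set point, deduce that $h^{-1}(c)$ is locally a coordinate slice, and then upgrade ``embedded'' to ``properly embedded'' by observing that $h^{-1}(c)$ is closed in $\cM$. Let $h:\cM\to\cN$ be smooth, write $m=\dim\cM$ and $n=\dim\cN$, and let $c\in\cN$ be such that $h^{-1}(c)$ is a regular level set.

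First, I would work locally. Pick any $x\in h^{-1}(c)$. Since $d_x h:T_x\cM\to T_{h(x)}\cN$ is surjective, the local submersion theorem (a consequence of the inverse function theorem) supplies smooth charts $(U,\varphi)$ of $\cM$ about $x$ and $(V,\psi)$ of $\cN$ about $c$ in which $\psi\circ h\circ\varphi^{-1}$ is the projection $(u^1,\dots,u^m)\mapsto(u^1,\dots,u^n)$. After shrinking $U$ so that $\varphi(x)=0$ and $\psi(c)=0$, this identifies $h^{-1}(c)\cap U$ with the slice $\{u^1=\cdots=u^n=0\}$ in $\varphi(U)$.

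Second, the previous step produces a slice chart at every point of $h^{-1}(c)$. By the standard slice-chart characterization of embedded submanifolds (Lee, Ch.~5), $h^{-1}(c)$ is therefore an embedded submanifold of $\cM$ of dimension $m-n$, with its smooth structure induced from $\cM$. Third, to upgrade to properly embedded, I would observe that $h$ is continuous and $\{c\}$ is closed in the Hausdorff manifold $\cN$, so $h^{-1}(c)$ is closed in $\cM$. A closed embedded submanifold is automatically properly embedded: for any compact $K\subseteq\cM$, the preimage of $K$ under the inclusion is $h^{-1}(c)\cap K$, a closed subset of the compact set $K$, hence compact.

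The main obstacle is really just the local submersion theorem itself, which is the standard consequence of the inverse function theorem and does all the geometric work; once that normal form is in hand, the slice criterion delivers the embedded submanifold structure and the closedness of $h^{-1}(c)$ gives properness essentially for free. No separate argument is needed at non-regular values because by hypothesis every point of $h^{-1}(c)$ is regular, so the submersion theorem applies uniformly along the whole level set.
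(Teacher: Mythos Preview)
Your argument is correct and is essentially the standard proof (local submersion theorem $\Rightarrow$ slice charts $\Rightarrow$ embedded submanifold, plus closedness of the preimage for properness). Note, however, that the paper does not prove this statement at all: it is quoted verbatim as Corollary~5.14 from Lee's \emph{Introduction to Smooth Manifolds} and used as a black-box tool in the appendix, so there is no paper proof to compare against---your write-up simply reproduces the textbook argument that the citation points to.
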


Suppose $\cS$ is a regular level set of a smooth map $h:\cM\to\RR^m$, and is treated as an embedded submanifold of $\cM$. Proposition \ref{prop:submanifold} characterizes the tangent space of $\cS$ at each point as a subspace of the tangent space of $\cM$ at the same point.  

\begin{proposition}[\cite{lee2012introduction}, Corollary 5.39]\label{prop:submanifold}
    Suppose $\cM$ is a smooth manifold and $\cS\subseteq \cM$ is an embedded submanifold. Suppose $\cS$ is a level set of a smooth submersion $h:\cM\to\RR^{m}$. For $p\in\cS$, a vector $v\in T_{p}\cM$ is tangent to $\cS$ if and only if $d_{p}h(v)=0$.
\end{proposition}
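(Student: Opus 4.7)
The plan is to prove both implications of the equivalence separately, leveraging the fact that $\cS$ is simultaneously an embedded submanifold and a regular level set, so in particular its dimension is determined by the corank of $h$.

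For the forward implication (if $v\in T_p\cS$ then $d_ph(v)=0$), I would use the curve characterization of tangent vectors. Since $\cS$ is an embedded submanifold, any $v\in T_p\cS$ can be realized as $\gamma'(0)$ for some smooth curve $\gamma\colon(-\epsilon,\epsilon)\to\cS$ with $\gamma(0)=p$. Writing $\cS=h^{-1}(c)$ for the value $c=h(p)$, the composition $h\circ\gamma$ is identically equal to $c$ on its domain. Differentiating at $t=0$ and using the chain rule yields $d_ph(v)=d_ph(\gamma'(0))=(h\circ\gamma)'(0)=0$, which is the desired conclusion. Under the usual identification of $T_p\cS$ with its image in $T_p\cM$ via the inclusion (which is an embedding), this shows $T_p\cS\subseteq\ker d_ph$.

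For the reverse implication, I would argue by dimension counting rather than by constructing a curve directly, since building a curve in $\cS$ tangent to a prescribed $v$ is precisely what we are trying to prove. Because $h$ is a submersion at $p$, the differential $d_ph\colon T_p\cM\to\RR^m$ is surjective, hence $\dim\ker d_ph=\dim\cM-m$. On the other hand, $\cS$ being a regular level set of $h$ has dimension $\dim\cS=\dim\cM-m$ (by Proposition \ref{prop:levelset} or the constant-rank/regular level set theorem). The forward implication gives the inclusion $T_p\cS\subseteq\ker d_ph$ of subspaces of $T_p\cM$, and since both have the same dimension $\dim\cM-m$, the inclusion is an equality.

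This argument is entirely routine; there is no real obstacle beyond being careful about the identification of $T_p\cS$ with a subspace of $T_p\cM$, which is valid because the inclusion $\cS\hookrightarrow\cM$ is a smooth embedding and therefore its differential at $p$ is injective. The only place where one needs the full regular level set hypothesis (rather than just "level set") is in the dimension-counting step, to ensure that $\dim\cS=\dim\cM-m$ rather than a larger number.
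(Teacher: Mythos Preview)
Your proposal is correct and is the standard argument for this fact. The paper itself does not prove this proposition; it merely cites it from Lee's textbook \cite{lee2012introduction}, so there is no in-paper proof to compare against.
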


\subsection{Lie group actions}

To employ the quotient manifold theorem~\cite{lee2012introduction}, one needs to study the properties of Lie group actions. One useful tool is the following proposition.

\begin{proposition}[\cite{lee2012introduction}, Corollary 21.6]\label{prop:a2}
    Every continuous action by a compact Lie group on a manifold is proper. 
\end{proposition}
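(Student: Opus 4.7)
The plan is to unwind the definition of properness and exploit compactness of $G$ directly. Recall that a continuous action of a Lie group $G$ on a manifold $M$ is called proper when the associated map
\[
\Theta: G \times M \to M \times M, \qquad \Theta(g, x) = (g \cdot x, x),
\]
is a proper map, meaning the preimage $\Theta^{-1}(K)$ of any compact set $K \subseteq M \times M$ is compact in $G \times M$. So the goal is to verify this for an arbitrary compact $K \subseteq M \times M$ under the assumption that $G$ is a compact Lie group.

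First, I would sandwich the preimage inside an obviously compact set. Let $\pi_2: M \times M \to M$ denote the projection onto the second factor. For any $(g, x) \in \Theta^{-1}(K)$, the point $x = \pi_2(\Theta(g,x))$ lies in $\pi_2(K)$, so
\[
\Theta^{-1}(K) \subseteq G \times \pi_2(K).
\]
Since $\pi_2$ is continuous and $K$ is compact, $\pi_2(K)$ is compact in $M$; combined with compactness of $G$, this makes $G \times \pi_2(K)$ a compact subset of $G \times M$ by Tychonoff (or the elementary product-of-compacts lemma for two factors).

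Second, I would show that $\Theta^{-1}(K)$ is closed in $G \times M$. Manifolds are Hausdorff, hence so is $M \times M$, so the compact set $K$ is closed there. Because $\Theta$ is continuous (the action is continuous and projection is continuous), the preimage $\Theta^{-1}(K)$ is closed in $G \times M$. A closed subset of a compact set is compact, so $\Theta^{-1}(K)$ is compact, completing the verification.

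The argument is essentially purely topological, which is why the hypothesis of a Lie group structure is not really used beyond ensuring continuity of the action; the only nontrivial inputs are the Hausdorff property of the manifold and the compactness of $G$. There is no serious obstacle: the main point to keep straight is the definition of a proper action (via $\Theta$ rather than via the action map itself), and the realization that compactness of $G$ lets us replace the $G$-fiber of the preimage by the whole group at no cost.
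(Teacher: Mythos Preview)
Your proof is correct and is the standard argument for this fact. Note, however, that the paper does not actually prove this proposition: it is stated in the appendix purely as a citation of Corollary~21.6 in Lee's textbook, with no accompanying proof, and is then invoked as a black box in the proof of Proposition~\ref{prop:4}. So there is no ``paper's own proof'' to compare against; your argument simply supplies the details that the paper outsources to the reference.
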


\section{Omitted proofs}

In this section, we provide the proofs that are omitted from the main text. For readers' convenience, we will restate the propositions. 

\subsection{Proof of Proposition \ref{prop:topology}}\label{sec:topology}

\begin{proposition}
    The metric spaces $(\Corr(m,[k]),d^{\orb,k})$ and  $(\Corr(m,[k]),d^{\rm E})$ share the same topology. 
\end{proposition}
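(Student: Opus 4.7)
The plan is to exhibit the identity map $\iota \colon (\Corr(m,[k]), d^{\orb,k}) \to (\Corr(m,[k]), d^{\rm E})$ as a continuous bijection between compact Hausdorff spaces; by the standard topology fact that such a map is automatically a homeomorphism, this forces the two metrics to induce the same topology.

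First I would verify continuity of $\iota$. If $Z_n \to Z$ in $d^{\orb,k}$, write $Z_n = X_n X_n^\top$ and $Z = X X^\top$ with $X_n, X \in \Pi^m \SSS^{k-1}$. By the definition of $d^{\orb,k}$ transported through $\Phi_k$, there exist $O_n \in \cO(k)$ such that $d^{\Pi\SSS,k}(X_n O_n, X) \to 0$. Since the spherical and Euclidean distances induce the same topology on each factor sphere, this gives $X_n O_n \to X$ in the Frobenius norm on $\RR^{m \times k}$, and consequently
\[
Z_n \;=\; (X_n O_n)(X_n O_n)^\top \;\longrightarrow\; X X^\top \;=\; Z
\]
in $d^{\rm E}$, using the right-$\cO(k)$-invariance of $X \mapsto X X^\top$.

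Next I would establish compactness on both sides. The target $(\Corr(m,[k]), d^{\rm E})$ is a closed, bounded subset of $\RR^{m \times m}$, so compact. For the source, the product of spheres $\Pi^m \SSS^{k-1}$ is compact, and the surjection $X \mapsto X X^\top$ is continuous from $(\Pi^m \SSS^{k-1}, \text{Euclidean})$ onto $(\Corr(m,[k]), d^{\orb,k})$: the inequality $d^{\orb,k}(X_n X_n^\top, X X^\top) \le d^{\Pi\SSS,k}(X_n, X)$ together with the manifest continuity of $d^{\Pi\SSS,k}$ (built from $\arccos$ of continuous inner products) suffices. Hence $(\Corr(m,[k]), d^{\orb,k})$ is the continuous image of a compact space and is therefore compact. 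Invoking the classical fact that a continuous bijection from a compact space to a Hausdorff space is a homeomorphism then yields the conclusion. The delicate point is really just the continuity direction of $\iota$ just verified; one cannot expect a bi-Lipschitz equivalence, so the reverse-direction continuity has to be extracted via compactness rather than by a direct distance estimate.
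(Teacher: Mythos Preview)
Your argument is correct and takes a genuinely different route from the paper. The paper proves both implications directly: for $d^{\orb,k}\Rightarrow d^{\rm E}$ it argues essentially as you do, but for the converse $d^{\rm E}\Rightarrow d^{\orb,k}$ it passes through the Bures--Wasserstein distance on $\Cov(m)$ (using that $d^{\rm E}$ and $d^{\rm BW}$ induce the same topology there) and then invokes Proposition~5.1 of Massart--Absil to produce registered representatives $\tilde X_n\to X$. By contrast, you prove only one direction of continuity and recover the other from the closed-map lemma via compactness of both sides. Your approach is more self-contained (no external Bures--Wasserstein input) and arguably cleaner; the paper's approach, on the other hand, is constructive in both directions and makes explicit how Euclidean closeness of the Gram matrices yields aligned factors, which may be informative elsewhere. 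Either way the substance of the ``easy'' direction is identical, and your compactness shortcut is a legitimate and tidy replacement for the harder direction.
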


\begin{proof}
    We prove this proposition in two steps.
    \vspace{5pt}
    {
    \renewcommand{\labelitemi}{\scalebox{0.4}{$\bullet$}}
    \begin{itemize}
    \item First, we claim that when
    \$
    X_nX_n^\top\to XX^\top\textnormal{ in }d^{\orb,k},\quad \textnormal{where }X_n,X\in\Pi^m\SSS^{k-1},
    \$
    it holds that $X_nX_n^\top\to XX^\top$ in $d^{\rm E}$.     Indeed, when $X_nX_n^\top\to XX^\top$ in $d^{\orb,k}$, it implies that there exists $\tilde X_n\in [X_n]$ such that $\tilde X_n\to X$ in $d^{\Pi\SSS,k}$. Note that $(\Pi^m\SSS^{k-1},d^{\Pi\SSS,k})$ has the same topology as $(\Pi^m\SSS^{k-1},d^{{\rm E},k})$, where $d^{{\rm E},k}$ is the Euclidean distance in $\RR^{m\times k}$. It implies that $\tilde X_n\to X$ in $d^{{\rm E},k}$. Since the map
    \$
    \phi_k:\Pi^m\SSS^{k-1}\to \RR^{m\times m}, \quad Y\to YY^\top,
    \$
    is continuous, we conclude that $\tilde X_n\tilde X_n^\top\to XX^\top$ in $d^{\rm E}$. The first claim then follows from the fact that $\tilde X_n\in[X_n]$ and $X_nX_n^\top=\tilde X_n\tilde X_n^\top$.

    \item Next, we claim that when 
    \$
    X_nX_n^\top\to XX^\top\textnormal{ in }d^{\rm E},\quad \textnormal{where }X_n,X\in\Pi^m\SSS^{k-1},
    \$
    it holds that $X_nX_n^\top\to XX^\top$ in $d^{\orb,k}$. Indeed, when $X_nX_n^\top\to XX^\top$ in $d^{\rm E}$, we have $X_nX_n^\top\to XX^\top$ in $d^{\rm BW}$ as $(\Cov(m),d^{\rm E})$ and $(\Cov(m),d^{\rm BW})$ have the same topology~\cite{thanwerdas2023bures}, where $d^{\rm BW}$ is the Bures-Wasserstein distance in $\Cov(m)$. Proposition 5.1 in \cite{massart2020quotient} implies that there exists $\tilde X_n\in[X_n]$ such that $\tilde X_n\to X$ in $d^{{\rm E},k}$. Thus, $\tilde X_n\to X$ in $d^{\Pi\SSS,k}$, as $(\Pi^m\SSS^{k-1},d^{{\rm E},k})$ and $(\Pi^m\SSS^{k-1},d^{\Pi\SSS,k})$ have the same topology. Therefore, $\tilde X_n\tilde X_n^\top\to XX^\top$ in $d^{\orb,k}$, and the second claim holds since $X_nX_n^\top=\tilde X_n\tilde X_n^\top$. 
    \end{itemize}
    }
    \vspace{5pt}
    Combining these two steps, we conclude the proof of this proposition.
\end{proof}

\subsection{Proof of Proposition \ref{prop:smooth}}\label{sec:b1}

\begin{proposition}
    $\Corr(m,\ell)$ is a smooth embedded submanifold of $\RR^{m\times m}$. The bijection $\Phi_{k,\ell}$ in \eqref{equ:Phi_kl} is a diffeomorphism between two smooth manifolds. 
\end{proposition}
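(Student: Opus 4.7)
The plan is to decompose the statement into two parts and handle them in sequence: first establish $\Corr(m,\ell)$ as a smooth embedded submanifold of $\RR^{m\times m}$, then invoke Proposition \ref{prop:differentiable} twice (once for $\Phi_{k,\ell}$ and once for its inverse) by exhibiting two natural smooth submersions whose composites with the map in question are smooth.

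For the submanifold structure, I would start from the known fact that $\Cov(m,\ell)$ is a smooth embedded submanifold of $\RR^{m\times m}$, with tangent space at $Z = X X^\top$ of the form $\{V X^\top + X V^\top : V \in \RR^{m \times k}\}$. Then consider the smooth map
$$h : \Cov(m,\ell) \to \RR^m, \qquad h(Z) = \diag(Z) - \mathbf{1},$$
so that $\Corr(m,\ell) = h^{-1}(\zero)$. To apply Proposition \ref{prop:levelset}, I need $dh_Z$ to be surjective at every $Z \in \Corr(m,\ell)$. Write $Z = X X^\top$ with $X \in \Pi^m_\ell \SSS^{k-1}$, so $\|X_{i\cdot}\| = 1$ for every $i$. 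A tangent vector $\delta Z = V X^\top + X V^\top$ satisfies $dh_Z(\delta Z) = 2 \diag(V X^\top)$, and for any target $c \in \RR^m$ the choice $V_{i\cdot} = (c_i/2)\, X_{i\cdot}$ yields $\diag(V X^\top) = c/2$. Surjectivity follows, so $\Corr(m,\ell)$ is a properly embedded submanifold.

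For the diffeomorphism, smoothness of $\Phi_{k,\ell}$ is the easy direction: the polynomial map $\widetilde\Phi : \Pi^m_\ell \SSS^{k-1} \to \Corr(m,\ell)$ defined by $X \mapsto X X^\top$ is smooth (the image lies in the embedded submanifold from step one), and $\widetilde\Phi = \Phi_{k,\ell} \circ \pi_{k,\ell}$ with $\pi_{k,\ell}$ a smooth submersion by Section \ref{sec:3.2}. Proposition \ref{prop:differentiable} then yields smoothness of $\Phi_{k,\ell}$. For the inverse, I would apply the same proposition with the roles of the two submersions swapped: if $\widetilde\Phi$ is itself a smooth submersion, then since $\Phi_{k,\ell}^{-1} \circ \widetilde\Phi = \pi_{k,\ell}$ is smooth, Proposition \ref{prop:differentiable} forces $\Phi_{k,\ell}^{-1}$ to be smooth.

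The main obstacle is therefore to verify that $\widetilde\Phi$ is a submersion, i.e.\ that the differential $V \mapsto V X^\top + X V^\top$ from $\cT_X \Pi^m_\ell \SSS^{k-1}$ onto $\cT_{X X^\top} \Corr(m,\ell)$ is surjective at every $X \in \Pi^m_\ell \SSS^{k-1}$. I would argue this by dimension counting. The kernel of $V \mapsto V X^\top + X V^\top$ on $\cT_X \Pi^m_\ell \SSS^{k-1}$ contains the vertical space $\{X \Omega : \Omega \in \cS_{\skew}(k)\}$ intersected with $\cT_X \Pi^m_\ell \SSS^{k-1}$, which is exactly the tangent to the $\cO(k)$-orbit through $X$ and has dimension $\dim \St(k,\ell) = k\ell - \ell(\ell+1)/2$ by Section \ref{sec:3.2}. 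A short parameter count gives $\dim \Pi^m_\ell \SSS^{k-1} = m(\ell - 1) + \ell(k - \ell)$, so the rank-nullity theorem forces the image to have dimension at most $m(\ell-1) - \ell(\ell-1)/2 = \dim \Corr(m,\ell)$, and to conclude equality it suffices to rule out extra kernel directions, which follows because any $V$ with $V X^\top + X V^\top = \zero$ satisfies $V X^\top \in \cS_{\skew}(m)$ and, using the rank-$\ell$ factorization of $X$, must be of the form $X \Omega$. Together, these rank equalities identify the image of the differential with $\cT_{XX^\top} \Corr(m,\ell)$ and complete the proof.
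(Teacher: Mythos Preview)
Your overall strategy matches the paper's exactly: realize $\Corr(m,\ell)$ as a regular level set of the diagonal map on $\Cov(m,\ell)$, then apply Proposition~\ref{prop:differentiable} twice using the two submersions $\pi_{k,\ell}$ and $\phi_{k,\ell}=\widetilde\Phi$. The only divergence is how you check that $\widetilde\Phi$ is a submersion. The paper writes $X=YU^\top$ with $Y\in\Pi^m_\ell\SSS^{\ell-1}$, $U\in\St(k,\ell)$, records the tangent space $\cT_X\Pi^m_\ell\SSS^{k-1}=\{TU^\top+YK^\top: \diag(YT^\top)=\zero,\ U^\top K+K^\top U=\zero\}$, and computes $d_X\phi_{k,\ell}(TU^\top+YK^\top)=TY^\top+YT^\top$, which visibly hits all of $\cT_{XX^\top}\Corr(m,\ell)$ by the characterization \eqref{equ:tangent_corr_concise}. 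Your route via dimension counting is a legitimate alternative and the numbers are correct.

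There is, however, a real gap in your kernel identification. The claim ``any $V$ with $VX^\top+XV^\top=\zero$ \dots\ must be of the form $X\Omega$'' is false for $V$ ranging over the ambient tangent space $\cT_X\Pi^m\SSS^{k-1}=\{V:\diag(VX^\top)=\zero\}$ when $\ell<k$: writing $X=YU^\top$ and $V=V_aU^\top+V_bU_\perp^\top$, the condition $VX^\top+XV^\top=\zero$ constrains only $V_a$ (forcing $V_a=YA$ with $A$ skew) and leaves $V_b\in\RR^{m\times(k-\ell)}$ free, whereas $V=X\Omega$ requires $V_b$ to lie in the column span of $Y$. What rescues the claim is precisely the restriction $V\in\cT_X\Pi^m_\ell\SSS^{k-1}$, which forces $V_b=YK_2^\top$ for some $K_2$; but you never state or derive this tangent space, and the phrase ``using the rank-$\ell$ factorization of $X$'' does not supply it. To close the gap you would need the same parametrization the paper uses, at which point the direct surjectivity argument is no more work than your dimension count.
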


\begin{proof}

    First, we establish the smooth structure of $\Corr(m,\ell)$ by utilizing its relationship with $\Cov(m,\ell)$, which has been shown to be a smooth embedded submanifold of $\RR^{m\times m}$~\cite{vandereycken2009embedded}. The tangent space of $\Cov(m,\ell)$ at $Z\in\Cov(m,\ell)$ is given by 
    \$
    \cT_{Z}\Cov(m,\ell)=\{\Delta Z+Z\Delta^\top\mid \Delta\in \RR^{m\times m}\}.
    \$
    Consider the smooth map 
    \#
    \diag_{m,\ell}:\Cov(m,\ell)\to \RR^{m}, \quad Z\to \diag(Z),
    \#
    where $\diag(Z)$ is a vector consisting of the diagonal elements of $Z$, defined in Section \ref{sec:notation}. Let $c$ be an $m$-dimensional vector whose entries are all 1. Then the set $\Corr(m,\ell)$ is a level set of $\diag_{m,\ell}$ in the sense that $\Corr(m,\ell)=\diag_{m,\ell}^{-1}(c)$. For $Z\in\Cov(m,\ell)$, the differential $d_{Z}(\diag_{m,\ell}):\cT_{Z}\Cov(m,\ell)\to \RR^m$ is given by
    \$
    V\to \diag(V),\quad V=\Delta Z+Z\Delta^\top\in \cT_{Z}\Cov(m,\ell), \quad \Delta\in\RR^{m\times m}.
    \$
    Therefore, the differential is surjective when $Z\in \Corr(m,\ell)$. Consequently, $\Corr(m,\ell)$ is a regular level set of $\diag_{m,\ell}$ and by Proposition \ref{prop:levelset}, $\Corr(m,\ell)$ is a smooth embedded submanifold of $\Cov(m,\ell)$. In particular, it is a smooth embedded submanifold of $\RR^{m\times m}$. By Proposition \ref{prop:submanifold}, the tangent space of $\Corr(m,\ell)$ at each point $Z$ is given by
    \#\label{equ:tangent_corr}
    \cT_{Z}\Corr(m,\ell)=\{\Delta Z+Z\Delta^\top\mid \Delta\in\RR^{m\times m},\diag(\Delta Z)=0\}.
    \#
    When $Z=YY^\top$ for $Y\in\Pi_\ell^m\SSS^{\ell-1}$, we can provide a concise characterization of this tangent space. Let $Y_{\perp}\in\St(m,m-\ell)$ be an orthonormal matrix such that $Y_{\perp}^\top Y=\zero$. Then we can rewrite $\Delta\in\RR^{m\times m}$ as follows:
    \$
    \Delta=\xi (Y^\top Y)^{-1}Y^\top+\zeta Y_{\perp}^\top,
    \$
    where $\xi\in\RR^{m\times \ell}$ and $\zeta\in\RR^{m\times (m-\ell)}$. Using this representation, we have
    \$
    \Delta Z+Z\Delta^\top=\xi Y^\top+Y \xi^\top.
    \$
    Substituting it into \eqref{equ:tangent_corr}, we can rewrite the tangent space in \eqref{equ:tangent_corr} as
    \#\label{equ:tangent_corr_concise}
    \cT_{Z}\Corr(m,\ell)=\{\xi Y^\top+Y\xi^\top\mid \xi\in\RR^{m\times \ell},\diag(\xi Y^\top)=\zero\}.
    \#
    We will use this characterization in the following proof.

    Now we show that the bijection $\Phi_{k,\ell}$ in \eqref{equ:Phi_kl} is a diffeomorphism between smooth manifolds $\Corr(m,\ell)$ and $\Pi_{\ell}^m\SSS^{k-1}/\cO(k)$. Define the map
    \#\label{equ:phi_kl}
    \phi_{k,\ell}:\Pi^m_\ell\SSS^{k-1}\to \Corr(m,\ell),\quad X\to XX^\top.
    \#
    It holds that $\phi_{k,\ell}=\Phi_{k,\ell}\circ \pi_{k,\ell}$ and $\pi_{k,\ell}=\Phi_{k,\ell}^{-1}\circ \phi_{k,\ell}$, where $\pi_{k,\ell}$ is given in \eqref{equ:pi_kl}. 
    By Proposition \ref{prop:differentiable}, if we show that $\pi_{k,\ell}$ and $\phi_{k,\ell}$ are both smooth submersions, then it implies that $\Phi_{k,\ell}$ and $\Phi_{k,\ell}^{-1}$ are both differentiable and thus $\Phi_{k,\ell}$ is a diffeomorphism. 
    
    Recall that $\pi_{k,\ell}$ is a smooth submersion. It remains to show that $\phi_{k,\ell}$ is a smooth submersion. Since $\phi_{k,\ell}$ is a surjective smooth map, it suffices to show that the differential $d_{X}(\phi_{k,\ell})$ at any $X\in\Pi^m_{\ell}\SSS^{k-1}$ is surjective. Here the differential map is given by
    \$
    d_{X}(\phi_{k,\ell}):\cT_{X}\Pi^m_\ell\SSS^{k-1}\to \cT_{\phi_{k,\ell}(X)}\Corr(m,\ell),\quad W\to W X^\top +X W^\top. 
    \$
    The space $\Pi_{\ell}^m\SSS^{k-1}$ can be rewritten as
    \#\label{equ:os}
    \Pi_\ell^m\SSS^{k-1}=\{YU^\top\mid Y\in\Pi_{\ell}^m\SSS^{\ell-1}, U\in\St(k,\ell)\}.
    \#
    The tangent space of $\Pi_{\ell}^m\SSS^{k-1}$ at each point  $X=YU^\top$ is given by
    \#\label{equ:tangent-os}
    \cT_{X}\Pi^m_\ell\SSS^{k-1}=\{TU^\top + YK^\top\mid\ & T\in\RR^{m\times \ell},\diag(YT^\top)=\zero,\notag\\
    &K\in\RR^{k\times \ell},U^\top K+K^\top U=\zero\}.
    \#
    Therefore, for $X=YU^\top$ in \eqref{equ:os} and $W=TU^\top+YK^\top$ in \eqref{equ:tangent-os}, we have 
    \$
    d_{X}(\phi_{k,\ell})(W)&=(TU^\top+YK^\top)(YU^\top)^\top+YU^\top (TU^\top+YK^\top)^\top\\
    &=TY^\top+YT^\top.
    \$
    By the characterization \eqref{equ:tangent_corr_concise} of the tangent space of $\Corr(m,\ell)$, we conclude that the differential $d_X(\phi_{k,\ell})$ is surjective. This completes the proof.
\end{proof}

\subsection{Proof of Proposition \ref{prop:counterexample}}\label{sec:b2}

\begin{proposition}
    Let $d^{\orb,k}$ be the distance in the orbit space $\Corr(m,[k])$. For $k_1<k_2\leq m$, we have 
    \$
    d^{\orb,k_2}(Z_1,Z_2)\leq d^{\orb,k_1}(Z_1,Z_2),\quad \forall Z_1,Z_2\in\Corr(m,[k_1]).
    \$
    Moreover, $d^{\orb,k_1}$ and $d^{\orb,k_2}$ are different, even when constrained to $\Corr(m,[k_1])$.
\end{proposition}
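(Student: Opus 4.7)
The plan has two parts, one for each claim. For the inequality, I will exploit the natural zero-padding embedding of rank-$k_1$ factorizations into rank-$k_2$ factorizations. Given $Z_i \in \Corr(m,[k_1])$, write $Z_i = X_i X_i^\top$ with $X_i \in \Pi^m\SSS^{k_1-1}$, and set $\tilde X_i = [X_i \mid \zero] \in \Pi^m\SSS^{k_2-1}$, where $\zero$ denotes an $m\times(k_2-k_1)$ block of zero columns. The row-norm constraint is preserved and $\tilde X_i \tilde X_i^\top = Z_i$. For any $O \in \cO(k_1)$, the block-diagonal lift $\tilde O = \diag(O, I_{k_2-k_1}) \in \cO(k_2)$ satisfies $\tilde X_2 \tilde O = [X_2 O \mid \zero]$, so all row-wise inner products are unchanged and $d^{\Pi\SSS,k_2}(\tilde X_1, \tilde X_2 \tilde O) = d^{\Pi\SSS,k_1}(X_1, X_2 O)$. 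Since $\cO(k_2)$ contains $\{\tilde O : O \in \cO(k_1)\}$, the infimum on the left extends over a larger set than on the right, yielding $d^{\orb,k_2}(Z_1,Z_2) \leq d^{\orb,k_1}(Z_1,Z_2)$.

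For the strict gap, I will exhibit a minimal explicit counterexample with $m=2$, $k_1=1$, $k_2=2$. Take the rank-one correlation matrices $Z_1$ with all entries $+1$ and $Z_2$ with diagonal entries $+1$ and off-diagonal entries $-1$. Representatives are $X_1 = (1,1)^\top$ and $X_2 = (1,-1)^\top$ in $\Pi^2\SSS^0$, and since $\cO(1) = \{\pm 1\}$, a direct two-case enumeration gives $d^{\orb,1}(Z_1,Z_2) = \pi$. For $k_2 = 2$, the zero-padded embeddings are $\tilde X_i = [X_i \mid 0] \in \Pi^2\SSS^1$. Parameterizing a rotation in $\cO(2)$ by angle $\theta$, the two rows of $\tilde X_2 O_\theta$ become $(\cos\theta,\sin\theta)$ and $(-\cos\theta,-\sin\theta)$, so the squared spherical product distance to $\tilde X_1$ equals $\theta^2 + (\pi-\theta)^2$ for $\theta \in [0,\pi]$, minimized at $\theta = \pi/2$ with value $\pi^2/2$. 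A parallel computation for the reflection component of $\cO(2)$ yields the same bound, so $d^{\orb,2}(Z_1,Z_2) \leq \pi/\sqrt{2} < \pi$, establishing the strict inequality. Counterexamples for general $m > 2$ follow by padding both matrices with an $(m-2)\times(m-2)$ identity block, which embeds this $2\times 2$ instance isometrically.

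The argument is essentially computational, so no significant obstacle arises. The one conceptual point worth highlighting is the mechanism underlying the strict gap: enlarging $k$ enriches $\cO(k)$ with rotations that genuinely mix the zero-padded columns with the nonzero ones, allowing the alignment procedure to shorten the spherical product distance beyond what is achievable over the smaller orthogonal group. Care is only needed to ensure the minimization ranges over both connected components of $\cO(2)$, which is handled by the symmetric reflection calculation, and to verify that the factorization of a rank-one $2\times 2$ correlation matrix is genuinely restricted to $\SSS^0$ when $k_1=1$, so that the $\pi$ lower bound for $d^{\orb,1}$ is tight rather than an artifact of a poor choice of representative.
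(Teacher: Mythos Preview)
Your argument for the inequality is correct and matches the paper's: zero-pad the factorization and observe that the block-diagonal orthogonals form a subgroup over which the infimum already recovers $d^{\orb,k_1}$.

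The strict-gap part, however, has two real problems. First, the paper's standing assumption is $k\geq 2$ (the orbit space construction in Section~3.1 requires $\Pi^m\SSS^{k-1}$ to be a connected Riemannian manifold, which fails for $k=1$), so your base case $k_1=1$ is outside the scope of the proposition. Second, and more substantively, your generalization via ``padding with an $(m-2)\times(m-2)$ identity block'' does not do what you need. Padding $Z_i$ with $I_{m-2}$ raises the rank to $m-1$, so the resulting matrices lie in $\Corr(m,[k_1])$ only when $k_1=m-1$; together with $k_1<k_2\leq m$ this forces $k_1=m-1$, $k_2=m$, and you have not produced examples for any other pair $(k_1,k_2)$. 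Moreover, your assertion that this padding is an \emph{isometric} embedding is unproven and is exactly the kind of statement the proposition itself says can fail: you would need to show that the optimal $O\in\cO(k_1')$ for the padded problem necessarily has block-diagonal form, which requires an argument.

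The paper's construction is essentially what your intuition points toward, but carried out for general $k_1$. In place of your identity block it uses a block $W\in\Pi^{m-2}_{k_1-1}\SSS^{k_1-2}$ of rank exactly $k_1-1$, so that
\[
X=\begin{pmatrix}-1&\zero\\ 1&\zero\\ \zero&W\end{pmatrix},\qquad
Y=\begin{pmatrix}1&\zero\\ 1&\zero\\ \zero&W\end{pmatrix}\in\Pi^m\SSS^{k_1-1}.
\]
The upper bound $d^{\orb,k_1+1}\leq \pi/\sqrt{2}$ comes from an explicit rotation, just as in your $\theta=\pi/2$ computation. The lower bound $d^{\orb,k_1}>\pi/\sqrt{2}$ is \emph{not} obtained by an isometry claim but by a case analysis on the $(1,1)$-entry of the minimizing $O^*\in\cO(k_1)$: if $O^*_{11}\neq 0$ the first two rows already contribute more than $\pi^2/2$, while if $O^*_{11}=0$ then orthogonality forces the lower-right block of $O^*$ to differ from $I_{k_1-1}$, and the full-rank $W$ block contributes a strictly positive extra term. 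This is precisely the step your ``isometric embedding'' claim glosses over.
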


\begin{proof}
    Let $k_1<k_2\leq m$. First, we show that %$d^{\orb,k_2}\leq d^{\orb,k_1}$ in the sense that
    \$
    d^{\orb,k_2}(Z_1,Z_2)\leq d^{\orb,k_1}(Z_1,Z_2),\quad \forall Z_1,Z_2\in\Corr(m,[k_1]).
    \$
    %for all $$.
    %We present the difference between the calculations of two distances. 
    Let $X,Y\in\Pi^m\SSS^{k_1-1}$ and {set $\tilde X=[X\ \zero],\tilde Y=[Y\ \zero]\in\Pi^m\SSS^{k_2-1}$.} The distances $d^{\orb,k_1}$ and $d^{\orb,k_2}$ are given by
    \#
    (d^{\orb,k_1}(XX^\top,YY^\top))^2&=\inf_{O\in\cO(k_1)}\sum_{i=1}^m\arccos^2(\inner{(XO)_{i\cdot}}{Y_{i\cdot}}),\label{equ:d1}\\
    (d^{\orb,k_2}(XX^\top,YY^\top))^2&=\inf_{\tilde O\in\cO(k_2)}\sum_{i=1}^m\arccos^2(\langle{(\tilde X\tilde O)_{i\cdot}},{\tilde Y_{i\cdot}}\rangle)\notag\\
    &=\inf_{U\in\cA}\sum_{i=1}^m\arccos^2(\inner{(XU)_{i\cdot}}{Y_{i\cdot}}),\label{equ:d2}
    \#
    where $\cA$ is given by
    \$
    \cA=\{U\in\RR^{k_1\times k_1}\mid \exists \tilde O\in\cO(k_2) \textnormal{ such that }U=\tilde O_{[1:k_1],[1:k_1]}\}.
    \$
    Since $\cO(k_1)$ is a subset of $\cA$, we have
    \$
    d^{\orb,k_1}(XX^\top,YY^\top)\geq  d^{\orb,k_2}(XX^\top,YY^\top).
    \$
    This proves the first part of the proposition.
    
    Now we demonstrate that $d^{\orb,k_1}$ is different from $d^{\orb,k_2}$. Due to the monotone property we have proved, it suffices to consider the case where $k_2=k_1+1$. We use examples to prove the claim. Specifically, let
    \$
    X=\begin{pmatrix}
        -1 & \zero\\
        1 & \zero \\
        \zero & W
    \end{pmatrix},Y=\begin{pmatrix}
        1 & \zero \\
        1 & \zero \\
        \zero & W
    \end{pmatrix}\in\Pi^{m}\SSS^{k_1-1},
    \$
    where $W\in\Pi^{m-2}_{k_1-1}\SSS^{k_1-2}$ is of rank $k_1-1$. Also, let 
    \$
    \tilde X=(\zero, X),\tilde Y=(\zero,Y) \in\Pi^m\SSS^{k_1}.
    \$
    Let
    \$
    O^{**}=\begin{pmatrix}
        0 & 1 & \zero \\
        1 & 0 & \zero \\
        \zero & \zero & I_{k_1-1}
    \end{pmatrix}\in\cO(k_1+1).
    \$
    Then by definition, the distance $d^{\orb,k_1+1}(\tilde X,\tilde Y)$ is smaller than
    \$
    d^{\orb,k_1+1}(\tilde X\tilde X^\top,\tilde Y\tilde Y^\top)\leq d^{\Pi\SSS,k_1+1}(\tilde X O^{**},\tilde Y)=\pi/\sqrt{2}.
    \$
    In the meanwhile, since $\cO(k_1)$ is a compact set, we can express $d^{\orb,k_1}(\cdot,\cdot)$ as follows:
    \$
    d^{\orb,k_1}(XX^\top,YY^\top)=d^{\Pi \SSS,k_1}(XO^*,Y)
    \$ 
    for some $O^*\in\cO(k_1)$. There are two cases: 
    \vspace{2pt}
    {
    \renewcommand{\labelitemi}{\scalebox{0.4}{$\bullet$}}
    \begin{itemize}
        \item First, $O^*_{11}\neq0$, where $O^*_{11}$ is the $(1,1)$-th element of $O^*$. In this case, 
        \$
        d^{\Pi \SSS,k_1}(XO^*,Y)\geq \sqrt{\arccos^2(O^*_{11})+\arccos^2(-O^*_{11})}>\pi/\sqrt{2}.
        \$
         
        \item Second, $O^*_{11}= 0$. In this case, we denote $O^*_{2:,2:}$ the $(2:k_1,2:k_1)$-th submatrix of $O^*$. {Since $O^*$ is an orthogonal matrix and $O^*_{11}=0$, we conclude that  $O^*_{2:,2:}$ is not equal to  $I_{k_1-1}$.} As a result, we have
        \$
        d^{\Pi\SSS,k_1-1}(W O^*_{2:,2:},W)>0,
        \$
        where we use the condition that $W$ is full rank (i.e., rank $(k_1-1)$).
        Then, by definition, 
        \$
        d^{\Pi\SSS,k_1}(XO^*,Y)\geq \sqrt{\pi^2/2+(d^{\Pi\SSS,k_1-1}(W O^*_{2:,2:},W))^2}>\pi/\sqrt{2}.
        \$
    \end{itemize}
    }
    \noindent Combining these two cases, we obtain
    \$
    d^{\orb,k_1}(XX^\top,YY^\top)=d^{\Pi\SSS,k_1}(XO^*,Y)> \pi/\sqrt{2}\geq d^{\orb,k_1+1}(\tilde X\tilde X^\top,\tilde Y\tilde Y^\top).
    \$
    This concludes the proof, since $XX^\top=\tilde X\tilde X^\top$ and $YY^\top=\tilde Y\tilde Y^\top$.
\end{proof}

%The sectional curvatures of $\Pi^{m}\SSS^{k-1}$ are strictly positive. 

%\section{Technical Proofs} 

%In this section, we provide the proofs that are omitted from the main text. 

%\newpage

\bibliographystyle{siamplain}
\bibliography{references}
\end{document}